\newtheorem{thm}{Theorem}[section]
\newtheorem{prop}[thm]{Proposition}
\newtheorem{lem}[thm]{Lemma}
\newtheorem{rmk}[thm]{Remark}
\theoremstyle{definition}
\newtheorem{definition}[thm]{Definition}
\theoremstyle{remark}
\numberwithin{equation}{section}
\newcommand{\BE}{\begin{equation}}
\newcommand{\EEQ}{\end{equation}}
\newcommand{\rfb}[1]{\mbox{\rm
		(\ref{#1})}\ifx\undefined\stillediting\else:\fbox{$#1$}\fi}
\newfont{\roma}{cmr10 scaled 1200}
\newcommand{\nline}  {{\mathbb N}}
\newcommand{\rline}  {{\mathbb R}}
\newcommand{\dd}  {{\rm d}\hbox{\hskip 0.5pt}}
\renewcommand{\leq} {\leqslant}
\renewcommand{\geq} {\geqslant}
\newcommand{\mm}    {{\hbox{\hskip 0.5pt}}}
\newcommand{\m}     {{\hbox{\hskip 1pt}}}
\newcommand{\bluff} {{\hbox{\raise 15pt \hbox{\mm}}}}
\newcommand{\sbluff}{{\hbox{\raise 10pt \hbox{\mm}}}}
\newcommand{\Om}    {{\Omega}}
\renewcommand{\div} {{\rm div\,}}
\newcommand{\eps}    {{\varepsilon}}
\newcommand{\prt}      {{\partial}}
\newcommand{\Ascr} {\mathcal{A}}
\newcommand{\Bscr} {\mathcal{B}}
\newcommand{\Fscr}{\mathcal{F}}
\newcommand{\Jscr} {\mathcal{J}}
\newcommand{\Kscr} {\mathcal{K}}
\newcommand{\Mscr} {\mathcal{M}}
\newcommand{\Oscr} {\mathcal{O}}
\newcommand{\Rscr} {\mathcal{R}}
\newcommand{\Sscr} {\mathcal{S}}
\newcommand{\Uscr} {\mathcal{U}}
\DeclareMathOperator{\dist}{dist}
\subjclass[2020]{35L50, 74F10, 35A09, 35Q31, 35L60}
\keywords{}
\begin{document}

\title[Compressible Euler interacting with a rigid body]
{Well-posedness of the motion of  a rigid body immersed  in a compressible inviscid fluid}

 \date{\today}

\author{Fr\'ed\'eric Rousset}
\address{Laboratoire de Math\'ematiques d'Orsay (UMR 8628),
	Universit\'e Paris-Saclay,
	Bâtiment 307, rue Michel Magat,
	F - 91405 Orsay, France}
\email{frederic.rousset@universite-paris-saclay.fr}

\author{Pei Su}
\address{Laboratoire de Math\'ematiques d'Orsay (UMR 8628),
	Universit\'e Paris-Saclay,
	Bâtiment 307, rue Michel Magat,
	F - 91405 Orsay, France}
\email{pei.su@universite-paris-saclay.fr}



\begin{abstract}
We consider a rigid body freely moving in a compressible inviscid fluid within a bounded domain $\Om\subset\rline^3$. The fluid is thereby governed by the non necessarily isentropic compressible Euler equations, while the rigid body obeys the conservation of linear and angular momentum. This forms a coupled system comprising an ODE and the initial boundary value problem (IBVP) of a hyperbolic system with characteristic boundary in a moving domain, where the fluid velocity matches the solid velocity along the normal direction of the solid boundary.
We establish the existence of a unique local  classical solution to this coupled system. To construct the solution, we first perform a change of variables to reformulate the problem in a fixed spatial domain, and then analyze an approximate system with a non-characteristic boundary. For this nonlinear approximate system, we use the better regularity for the trace of the pressure on the boundary to contruct a solution by a fixed-point argument  in which  the fluid motion and the solid motion are updated in successive steps.
We are then able to derive estimates independent of the regularization parameter and  to pass to the limit  by a strong  compactness arguments.

\end{abstract}
\maketitle
{\bf Key words.} Fluid-solid interactions, hyperbolic system, classical solution, compressible inviscid fluid.



\section{Introduction}\label{sec_intro}

We study the interaction between a compressible inviscid fluid and a rigid body, which is completely immersed in the fluid. The fluid is described by the three dimensional non necessarily isentropic  {\em compressible Euler equations}. The motion of the rigid body obeys the conservation of linear and angular momentum. We are interested in establishing the local well-posedness of this fluid-body coupled system in  bounded domains.  The approach we are using can be extended to an arbitrary number of solids in a straightforward way. We shall now precisely state the problem.

\subsection{Statement of the problem}\label{sec_pro}

The domain occupied by the fluid-body system, denoted by $\Om\subset \rline^3$, is assumed to be a regular bounded domain with impermeable boundary $\partial\Om$. The rigid body we consider  is defined  by  the connected   set  $\Sscr(t)\subset\Om$ at time $t$ with center of mass at $h(t)\in \rline^3$  which is moving in the fluid,  we can allow an   {\em arbitrary shape}, we just assume that 
 $\mathcal{S}(0)$ is the closure of a smooth bounded open set.
The fluid domain, denoted by $\Fscr(t)$, is the exterior part of $\Sscr(t)$ in $\Om$, i.e. $\Fscr(t)=\Om\setminus\Sscr(t)$. The boundary of $\Fscr(t)$  is thus made  of two parts $\partial\Om$ and $\partial\Sscr(t)$, i.e. $\partial\Fscr(t)=\partial\Om\cup \partial\Sscr(t)$. We denote by $n(t,y)$ the unit normal vector field of $\partial\Fscr(t)$, which directs toward the interior of $\Sscr(t)$ along $\partial\Sscr(t)$ and toward the exterior of $\Om$ along $\partial\Om$, respectively. We use $p(t,y)$, $u(t,y)$ and $s(t,y)$ to describe the evolution of the fluid, representing the pressure, the velocity, and the physical entropy, respectively. With these notations, the governing equations describing the rigid body moving in the fluid,  reads for $t>0$,
\begin{itemize}
	\item Fluid equations:
	\begin{flalign}{\label{ns-ob1}}
	\begin{split}
	\partial_t p+(u\cdot\nabla) p+\rho c^2\m\div u=0 \qquad  \forall \m y 
	\in \Fscr(t), \\
	\partial_t u+(u\cdot\nabla) u+\rho^{-1}\nabla p=0 \qquad \forall  \m y \in \Fscr(t),\\
	\partial_t s+(u\cdot \nabla) s=0 \qquad \forall \m y\in \Fscr(t),
	\end{split}
	\end{flalign}
	\item Rigid body equations:
	\begin{align}
	m h''(t) &= \int_{\partial \Sscr(t)} 
	p\m n\m \dd \Gamma, \;\;\;{\label{solid1}} \\
	(J \omega)'(t) &= \int_{\partial \Sscr(t)} 
	(y-h) \times p\m n\m \dd \Gamma, 
	\;\;\; {\label{solid2}}
	\end{align}
	\item Boundary conditions:
	\begin{equation}\label{boundarycon1}
	\begin{aligned}
	u\cdot n&= u_\Sscr\cdot n & \qquad \forall \m   y \in \partial \Sscr(t),\\
	u\cdot n&=0  \qquad & \qquad \forall \m   y\in \partial\Om,
	\end{aligned}
	\end{equation}
	\item Initial conditions:
	\begin{equation}{\label{initialcon1}}
	\begin{aligned}
	(p, u, s)(0,\m y)&=(p_0, u_0, s_0)(y)  \qquad \forall \m  y\in\Fscr(0),\\
	\quad
	h (0)=0, \quad 
h'(0) &= l_0, \quad
	\omega(0) = \omega_0.
	\end{aligned}
	\end{equation}
\end{itemize}

Without loss of generality, we assume that the center of mass of the rigid body is at the origin at $t=0$.

We have chosen to describe the fluid by using the pressure instead of the density and thus to see the density $\rho$ and
 the sound speed $c$ as functions of $p$ and $s$.
  This is equivalent for smooth enough solutions to the more standard description where we start from the mass conservation
  \begin{equation}
  \label{masscons}
  \partial_{t} \rho + \div(\rho u) =0
  \end{equation}
   instead of the first equation of  \eqref{ns-ob1} and we assume an equation of state where  the pressure $p$ is  given as a  function of the density $\rho$ and the entropy $s$, i.e. $p=p(\rho, s)$ for some  pressure law. For example,  for  {\em ideal gas}, $p(\rho, s)=\kappa e^{\frac{s}{c_\nu}}\rho^\gamma$, where $\gamma>1$, $\kappa,\,  c_\nu>0$  or if an adiabatic process is assumed in addition,   $p(\rho)=a\rho^\gamma$ where $a>0$ and $\gamma\geq 1$.
The hyperbolicity of the system for compressible gas taking $ (\rho, u, s)$ as unknowns  requires $\rho >0$ and  $\partial_{\rho} p>0$, so that the sound
speed $c$ defined by 
 $c=(\partial p/\partial\rho )^{ 1 \over 2}(\rho, s) $ is well-defined and positive.
Since $\rho \mapsto p(\rho, s)$ is  then invertible,  $(p, u, s)$ can be  chosen as the unknowns and the first equation of  \rfb{ns-ob1}
deduced from the mass conservation \eqref{masscons}, $\rho$ and $c$ being seen as  positive functions of $(p, s)$. In order to consider
 general  laws, we assume that there exists $\Uscr\subset\rline^2$ such that
 $\rho(p,s)$ and $c(p, s)$ are smooth on $\Uscr$ and such that
  $\rho>0$ and $c>0$ for every $(p,s)\in \Uscr$.  This  represents the hyperbolic region for  the system \rfb{ns-ob1}. 

We shall now explain the derivation of the motion of the rigid body. To describe it, it is convenient to use  the rotation matrix $Q(t)\in SO(3)$ ($Q(0)=I_{3\times 3}$). Then the velocity of the solid $u_\Sscr$, corresponding to the particle $y(t)=h(t)+Q(t)y^0$ ($y^0$ is the initial position), is given by 
\begin{align*}
u_\Sscr&=h'(t)+Q'(t)y^0=h'(t)+Q'(t)Q^{-1}(t)(y-h(t))=h'(t)+Q'(t)Q^\intercal(t)(y-h(t)).
\end{align*} 
Since $Q'(t)Q^\intercal(t)$ is skew-symmetric, there exists a unique rotation vector $\omega(t)\in \rline^3$ such that 
$$Q'(t)Q^\intercal(t)y=\omega(t)\times y \qquad \forall \m  y\in\rline^3,$$
which implies that 
\begin{equation}\label{us}
u_\Sscr(t,y)=h'(t)+\omega(t)\times (y-h(t)) \qquad \forall \m  y\in \Sscr(t).
\end{equation} 
We further denote $l(t)=h'(t)$, then $l$ and $\omega$ above represents the translation velocity and the angular velocity of the rigid body, respectively.

The density of the body is a positive function $\rho_\Sscr\in L^\infty(\Sscr(0); \mathbb{R})$. 
The constant $m$ and the matrix $J$ stands for the mass and the inertia tensor of the body and can be expressed as below, respectively:
$$m:=\int_{\Sscr(0)}\rho_\Sscr \m\dd y, $$
$$J(t):=Q(t) J_0 Q^\intercal(t), $$
where $J_0$ is the initial value of $J$ given by
$$J_0:=\int_{\Sscr(0)}\rho_{\Sscr}\left(|y|^2I_{3\times 3}-y \otimes y \right)\dd y. $$

Many studies have been devoted to the subject of fluid-rigid body interaction systems, depending on the type of  fluids and the domain as well as the parameters of the rigid body. In the Navier-Stokes setting,  that is to say for viscous fluids, there are a lot of contributions on the well-posedness
and the qualitative behaviour of the system, we can refer  for instance to  \cite{san2002global, desjardins1999existence, gunzburger2000global, feireisl2003motion, nevcasova2022motion} for weak solutions, to \cite{takahashi2003analysis, wang2011analyticity} for strong solutions, for the large time behaviour to  \cite{ervedoza2014long, ervedoza2023large, maity2023motion} and for the study of some  singular limit problems to \cite{lacave2017small, he2024vanishing}. In the absence of viscosity, the case of  incompressible fluids, that is to say when the fluid is described by the incompressible Euler equation,  are also well studied,  see   for instance  \cite{glass2012smoothness, houot2010existence, ortega2007motion, rosier2009smooth}.
As far as we know, for the case of a rigid body interacting with a compressible inviscid fluid, there is only one work by Feireisl and M\'acha \cite{feireisl2021motion} where the existence of  weak solutions is achieved by using the method of convex integration. 

In the case of viscous fluids, the  local well-posedness results are usually obtained through fixed-point arguments relying on the smoothing
effect of the viscous term, while for incompressible inviscid fluids, the fluid system can be reduced to a transport equation coupled with
an elliptic equation by using the vorticity. 
  
Here, we have to consider the coupling between the motion of the solid and the fluid as an initial boundary value problem (IBVP) for the  hyperbolic system  \eqref{ns-ob1} in a moving domain with dynamical boundary conditions. 
As long as the solid does not touch the boundary, this moving  spatial domain has for fixed time  smooth boundaries, nevertheless, the time regularity
of the solid boundary is limited due to the coupling with the fluid motion. This has to be carefully addressed in our analysis since
the analysis of IBVP for first order  hyperbolic systems necessitates to involve both  time and space regularity.
The boundary conditions \eqref{boundarycon1}
 make the boundary (both the exterior boundary and the surface of the solid) characteristic.
 Without the solid motion, the existence theory for hyperbolic IBVPs with non-characteristic boundary is well developed, see for instance   \cite{rauch1974differentiability, majda1983stability, metivier2001stability, benzoni2006multi}. When the boundary is characteristic, a loss of control of the normal derivatives occurs in the sense that two  tangential (including time) derivatives are needed to control one normal derivative. Such phenomenons are treated, for instance, in the works of Secchi \cite{secchi1996well} and Gu\`es \cite{gues1990probleme}, concerning general quasi-linear hyperbolic systems. The situation is more favorable for the compressible Euler system with impermeable boundary condition. For example  in Schochet \cite{schochet1986compressible} and Ohkubo \cite{ohkubo1989well}, the structure of the vorticity equation is used to control the normal derivatives of the characteristic component and thus $H^m$-initial data yields a solution in $H^m$ space.
 Finally we mention that, in \cite{lannes20232} 
Iguchi and Lannes studied a coupled hyperbolic system where a fixed rigid body  is floating in a fluid described by the nonlinear shallow water equations.

%
%
%

\subsection{Main result}

We recall that we have defined  $  \Uscr \subset \mathbb{R}^2$ as  an open set such that $(p,s) \mapsto \rho(p, s)$, $ (p,s) \mapsto c(p, s)$ 
are smooth on $ \Uscr$ and such that $\rho>0$, $c>0$.

To quantify the regularity of  solutions in a time-space domain  $\mathcal{O}_T=(0, T)\times \mathcal{O}$,  for some smooth open set $\mathcal{O}$, we shall use  the space $X^m([0, T];\mathcal{O})$:
$$X^m([0, T];\mathcal{O}):=\bigcap_{k=0}^m C^k([0, T]; H^{m-k}(\mathcal{O}))$$ 
with associated norm 
$$\|f\|_{X^m_T(\mathcal{O})}=\sup _{t\in [0, T]}\interleave f(t)\interleave_{m,\mathcal{O}}, \quad \quad \interleave f(t)\interleave_{m,\mathcal{O}}=\sum_{i=0}^m\|\partial_t^i f\|_{H^{m-i}(\mathcal{O})}, $$
where $f$ can be a scalar or vector function. For the vector function $f$, the norm $\| f(t)\|_{X^m}$ is the sum of the $X^m$-norm of each component of $f$.
Note that we have in particular $H^{m+1}(\mathcal{O}_T)\subset X^m([0, T];\mathcal{O})\subset H^m(\mathcal{O}_T)$. 

Our main result which  is the existence of a local  classical solution of the fluid-rigid body system \rfb{ns-ob1}--\rfb{initialcon1} in the bounded domain $\Om$ reads:
\begin{thm}\label{main}
Let $m$ be an integer and $m\geq 3$. Suppose that
\begin{itemize}
	\item the initial data $(p_0, u_0, s_0, l_0,\omega_0)\in (H^m(\Fscr(0)))^3\times (\rline^3)^2$;
	\item $(p_0, u_0, s_0, l_0, \omega_0)$ satisfy the {\em compatibility conditions} up to order $m-1$ in the sense of Definition \ref{compatibility} below;
	\item 
	 For every $x\in \overline{\Fscr(0)}$, $(p_0(x), s_0(x))\in\Uscr$.
	 \end{itemize}  
Then there exists $T>0$ such that the coupled fluid-rigid body system \rfb{ns-ob1}--\rfb{initialcon1} has  a unique classical solution $(p, u, s, l, \omega)$ in the sense that after  the change of variable to a fixed spatial domain $\Fscr(0)$ defined below, the new unknowns satisfy
$$(\overline p, \overline u, \overline s, \overline l, \overline \omega)\in \left(X^m([0, T];\Fscr(0))\right)^3\times \left(C^m[0, T]\right)^2. $$
Moreover, for every $(t,x)\in [0, T]\times \overline{\Fscr(0)}$, we have $(\overline p(t,x), \overline  s(t,x))\in \Uscr$.
\end{thm}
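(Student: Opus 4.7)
The plan is to follow the roadmap announced in the abstract: reduce to a fixed spatial domain, solve a non-characteristic regularized problem by a fixed-point argument, and then pass to the limit by uniform estimates and strong compactness. In the first step, using a cut-off localised in a tubular neighbourhood of $\partial\Sscr(0)$ that stays away from $\partial\Om$, I would extend the rigid motion $x\mapsto h(t)+Q(t)x$ to a time-dependent diffeomorphism $\phi(t,\cdot):\Om\to\Om$ which is the identity near $\partial\Om$ and coincides with the rigid motion on a neighbourhood of $\overline{\Sscr(0)}$. Defining $\overline p=p\circ\phi$, $\overline s=s\circ\phi$, and $\overline u(t,x)=Q^\intercal(t)\bigl(u(t,\phi(t,x))-\partial_t\phi(t,x)\bigr)$, the fluid equations \eqref{ns-ob1} become a quasilinear symmetric hyperbolic system on the fixed domain $\Fscr(0)$ whose coefficients depend smoothly on $(h,l,Q,\omega)$ and on the Jacobian of $\phi$, coupled with the ODEs \eqref{solid1}--\eqref{solid2} now expressed as integrals of the trace of $\overline p$ over the fixed surface $\partial\Sscr(0)$. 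The boundary condition becomes $\overline u\cdot\overline n=0$ on the whole of $\partial\Fscr(0)$, which is purely characteristic.

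Next I would introduce a small parameter $\eps>0$ and replace the condition on $\partial\Sscr(0)$ by a maximally dissipative non-characteristic one of the form $\overline u\cdot\overline n+\eps\overline p=0$, together with a matching modification of the right-hand side of the solid ODE so that the energy identity of the coupled system is preserved. For $\eps$ fixed, I would build a solution by a Picard-type scheme: given a candidate solid trajectory $(h,Q,l,\omega)$ of the regularity dictated by the theorem whose initial jet at $t=0$ is consistent with the compatibility conditions of Definition~\ref{compatibility}, first solve the non-characteristic quasilinear IBVP for $(\overline p,\overline u,\overline s)$ on a short interval using the classical theory (Majda, M\'etivier, Benzoni--Serre), and then update $(h,Q,l,\omega)$ by integrating \eqref{solid1}--\eqref{solid2} driven by the trace of $\overline p$ on $\partial\Sscr(0)$. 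The crucial point here, which would fail in the characteristic case, is that for the non-characteristic problem this trace lies in $H^{m-1/2}$ with quantitative control in $\eps$, providing the Lipschitz estimates needed to obtain a contraction on a short interval $[0,T_\eps]$.

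The third step is to derive, on a common interval $[0,T]$ independent of $\eps$, uniform $X^m$-bounds on $(\overline p^{\,\eps},\overline u^{\,\eps},\overline s^{\,\eps})$ and $C^m$-bounds on $(h^\eps,Q^\eps,l^\eps,\omega^\eps)$. Following the strategy of Schochet and Ohkubo for the characteristic compressible Euler system, I would run tangential energy estimates involving all time derivatives, and then recover the normal derivatives of the characteristic components from the transport equations satisfied by the vorticity $\operatorname{curl}\overline u$ and by the entropy $\overline s$; the dissipative boundary contribution generated by $-\eps\int|\overline p|^{2}$ has the good sign and can be discarded. Once such bounds are in hand, Aubin--Lions yields strong compactness in $X^{m-1}$, and one may pass to the limit $\eps\to 0$ in the nonlinear terms, in the ODE, and in the characteristic boundary condition. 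Continuity of $(\overline p,\overline s)$ together with smallness of $T$ ensures that the hyperbolicity region $\Uscr$ is not left, and uniqueness follows from a standard $L^2$ stability estimate on the difference of two solutions.

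The hard part will be the third step: extracting $\eps$-uniform high-order estimates in a framework that simultaneously features a characteristic limit and a moving boundary coupled to an ODE. The coefficients of the reformulated system involve time derivatives of $\phi$, hence of $(h,Q)$, whose time regularity is only the one produced by the ODE. Closing the tangential estimates and handling the resulting commutators, while reconciling the loss of normal control in the characteristic limit with the non-characteristic theory used at finite $\eps$, is where the vorticity and entropy structure of the Euler system, together with the compatibility conditions, must be exploited most carefully.
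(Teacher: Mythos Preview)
Your overall architecture matches the paper's, but the regularisation step has a genuine gap. Replacing the boundary condition on $\partial\Sscr(0)$ by $\overline u\cdot\overline n+\eps\,\overline p=0$ does \emph{not} make the boundary non-characteristic: this is a property of the normal boundary matrix $A_n=\sum_j A^j n_{0,j}$, not of the boundary condition imposed, and $A_n$ still has a three-dimensional kernel (the tangential velocity components and the entropy) regardless of what condition you prescribe. Hence the non-characteristic well-posedness theory you invoke (Majda, M\'etivier, Benzoni--Serre) does not apply to your regularised IBVP, and neither the existence of a solution to the approximate problem nor the claimed trace control of $\overline p$ is established. The paper instead perturbs the \emph{interior operator}, replacing $A^j$ by $A^j+\eps\,\nu^j I$ (i.e.\ adding $\eps(\nu\cdot\nabla)$ to every transport term), which genuinely makes the normal matrix invertible near the boundary; the boundary condition is left unchanged. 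A second, related gap concerns compatibility: your modified condition $\overline u\cdot\overline n+\eps\,\overline p=0$ is already violated by the given data at order zero, and tweaking the solid ODE does nothing to restore it. The paper handles this by simultaneously adding an $\eps$-dependent source $F=\eps(\nu\cdot\nabla)\tilde U^n$, with $\tilde U^n$ a lifting of the full initial time-jet, arranged so that the compatibility relations for the perturbed system coincide exactly with those of the original one.

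A smaller slip is in the change of variables: taking $\overline u=Q^\intercal\bigl(u\circ\phi-\partial_t\phi\bigr)$ with a \emph{global} factor $Q^\intercal$ works near $\partial\Sscr(0)$ but fails near $\partial\Om$, where $\phi=\mathrm{id}$ and the condition $u\cdot n_0=0$ turns into $u\cdot Qn_0=0$ rather than $\overline u\cdot n_0=0$. The paper uses $\overline u=(\nabla\phi)^{-1}(u\circ\phi)$, so the prefactor is $Q^\intercal$ near the solid and the identity near $\partial\Om$; the resulting boundary condition is inhomogeneous, $\overline u\cdot n_0=\overline u_\Sscr\cdot n_0$, and a metric factor $M=(\nabla\phi)^\intercal\nabla\phi$ enters the symmetriser, which is why the vorticity estimate is carried out on $\operatorname{Curl}(M\overline u)$ rather than on $\operatorname{curl}\overline u$.
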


\begin{rmk}
{\rm
We obtain only a short-time existence result. As with all fluid–rigid body systems, one restriction is that collision between the solid and the boundary of the fluid domain is not allowed.  Nevertheless, in addition, here the time interval must be chosen sufficiently small to prevent the formation of singularities in the fluid. Indeed, since the fluid is governed by the compressible Euler system, we cannot, in general, expect a global classical solution up to the time of collision with the boundary.  By using the finite speed of propagation property of the Euler equations, it is easy to see that the motion of the solid cannot prevent shock formation in the fluid. For instance, 
one can take initial data that leads to shock formation and place the solid in a large region 
of  constant pressure and zero velocity, so that at the time of singularity the solid will be still
immerged in a region 
of  constant pressure and zero velocity and thus is still not moving.
}
\end{rmk}
\begin{rmk}
{\rm
For the sake of simplicity,  we just treat one rigid body in this statement. The results in Theorem \ref{main} can be directly extended to the system with many bodies, as long as there is also no collision between the bodies.
To do so, it suffices to adapt the  cut-off function in  the change of variables. The details can be found in Remark \ref{manybodies}.
}
\end{rmk}
The first difficulty  in the analysis is to find a suitable change of coordinates  that reformulates the fluid-solid system \rfb{ns-ob1}--\rfb{initialcon1} in a fixed spatial domain. On the one hand, 
for inviscid fluids, along the solid boundary the tangential component of the fluid velocity is in general different from that of the solid. Consequently, the use of a Lagrangian map derived from the fluid's Lagrangian coordinates does not yield a time-independent domain, since the solid continues to move with nonzero tangential velocity (except in the very special case where the solid is a sphere).
On the other hand, the choice of the frame  moving with the solid  for performing the  change of variables, i.e. taking $x=Q^\intercal(t)(y-h(t))$, is not suited to fluid domains with boundaries as this makes  the exterior boundary  to be  moving  in the transformed frame. Moreover, this approach is also not well-suited to configurations involving multiple rigid bodies.
To overcome these issues, we introduce an artificial velocity field that smoothly transitions between the solid velocity on the solid boundary and the zero velocity on the exterior boundary of the fluid domain. The flow map associated with this vector field is then used to define the change of variables. More details are presented in Section \ref{changesec}. 

After reformulating the problem on a fixed spatial domain, we can derive the high-order a-priori estimates for the coupled model, using tools similar to those developed for the compressible Euler system in \cite{schochet1986compressible}. Based on the conservation of total energy in the coupled system, we first obtain high-order energy estimates for tangential and time derivatives.
 To recover normal derivatives, we can use the transport equation solved by the entropy and the vorticity on the one-hand
and  the fluid system  itself  to convert time and tangential derivatives into normal derivatives for the acoustic components close to the boundary
 on the other hand.
The main difficulty is then the construction of a solution.
A first  attempt would be to use an iterative approach, where assuming that
the solid motion is given, we solve the fluid system and then using in particular the obtained pressure we update the motion
of the solid by using \eqref{solid1}, \eqref{solid2}. Nevertheless, this scheme yields a loss of derivative at each step due to the lack
of control of the trace of the pressure on the boundary.
We shall thus  first use a non-characteristic approximation, in the sense that the fluid part  of the coupled system
 is modified through the addition of small transport terms  to make the boundary non-characteristic.  The resulting approximate system allows for control of the trace of the pressure on the boundary  and thus it can be solved eventually using the iterative scheme described above. We can then  derive uniform estimates with respect to the small perturbation parameter by adapting  the strategy of \cite{schochet1986compressible} above.
  Finally, we obtain the existence  of solution for the original fluid–solid coupled system by passing to the limit using  strong compactness arguments.
Note that the method of non-characteristic approximations has been adopted for instance in \cite{schochet1986compressible} (see also  \cite{lannes20232} for a more recent utilisation)  to construct solutions to the compressible Euler system in a bounded domain. This technique provides an effective way to develop existence theories for nonlinear hyperbolic systems, while circumventing the delicate analysis of  linear characteristic hyperbolic IBVP.

\subsection{Organization of the paper}

In Section \ref{changesec} we first explain the  change of variables to reformulate the fluid-solid system in a fixed spatial domain.  Then in Section \ref{approx} we introduce the  approximate hyperbolic system with non-characteristic boundary, and discuss  the effect on the compatibility conditions. We thereby build the  existence theory  for the approximate system.  Finally we derive in Section \ref{uniform} uniform estimates with respect to the
approximation parameter.
Section \ref{limit} is devoted to passing to the limit in  the approximate system and obtaining the solution for the original fluid-solid coupled system as well as deriving the desired regularity.

\subsection{Notations}\label{notation}

We shall use the following notations throughout the paper.

For any smooth bounded domain $\mathcal{O}\subset\rline^3$, the notation $L^p(\mathcal{O})$ and $W^{m,p}(\mathcal{O})$, for every $m\in \nline$ and $1\leq p<\infty$, represents the standard Lebesgue and Sobolev spaces, respectively. For $p=2$, as above we use $H^m(\mathcal{O})$.

To perform our energy estimates we will have to use  {\em conormal Sobolev spaces}.
They are defined in the following way.
 Let $(Z_k)_{1\leq k\leq N}$ be a finite set of smooth vector field which is a  generating  set  for $x \in \mathcal{O}$  and  which are tangent to the boundary of $\mathcal{O}$.  We shall also include the time derivative in the estimate and we set $Z_0=\partial_t$. 
Define 
$$Z^\beta=Z_0^{\beta_0}Z_1^{\beta_1}\cdots Z_N^{\beta_N}, \qquad \beta=(\beta_0, \beta_1,\cdots \beta_N)\in \nline^{N+1}.$$
For $p=2$ or $p=\infty$ we thereby introduce the following conormal Sobolev spaces:
$$L^p(0, T; H^m_{co}(\mathcal{O})):=\left\{f\in L^p(0,T; L^2(\mathcal{O})), Z^\beta f\in L^p(0, T; L^2(\mathcal{O}))\right\}, $$
endowed with the norm
\begin{equation}\label{norm}
\|f\|_{L^p_TH^m_{co}(\mathcal{O})}=\sum_{|\beta|\leq m}\|Z^\beta f\|_{L^p(0, T; L^2(\mathcal{O}))},
\end{equation}
where $|\beta|:=\beta_0+\beta_1+\cdots+\beta_N$. At a given time $t_0$, we use the semi-norms:
$$ \|f(t_0)\|_{H^m_{co}(\mathcal{O})}=\sum_{|\beta|\leq m}\|(Z^\beta f)(t_0)\|_{L^2(\mathcal{O})}.$$
For a vector function $u$, we say that $u\in L^p(0, T; H^m_{co}(\mathcal{O}))$ if each component of $u$ is in $L^p(0, T; H^m_{co}(\mathcal{O}))$, i.e.
$$\|u\|_{L^p_TH_{co}^m(\mathcal{O})}=\sum _{i=1}^3\sum_{|\beta|\leq m}\|Z^\beta u_i\|_{L^p(0, T; L^2(\mathcal{O}))}. $$

To define the  vector fields $(Z_k)_{1\leq k\leq N}$,  since $\mathcal{O}\in \rline^3$ is a smooth bounded domain, we can use the existence of a covering of $\mathcal{O}$ under the following form:
$$\mathcal{O}=\mathcal{O}_0\cup_{i=1}^n\mathcal{O}_i, \quad \mathcal{O}_0\Subset \mathcal{O}, \quad \mathcal{O}_i\cap \partial\mathcal{O}\neq \emptyset. $$
Assume that the boundary $\partial\mathcal{O}$ is described locally by $x_3=\varphi_i(x_1, x_2)$ in each chart such that we have
$$\mathcal{O}\cap \mathcal{O}_i=\left\{x=(x_1, x_2, x_3)| \m x_3>\varphi_i(x_1, x_2)\right\}, $$
and 
$$\partial\mathcal{O}\cap \mathcal{O}_i=\left\{x=(x_1, x_2, x_3)| \m x_3=\varphi_i(x_1, x_2)\right\}. $$

We denote by $\partial_{x_k}$ with $k=1,2,3$ the derivatives with respect to the original coordinates $(x_1, x_2, x_3)$ in $\rline^3$. In $\mathcal{O}_0$ we take directly the vector fields $\partial_{x_k}$ for $k=1,2,3$. Near the boundary we consider the local coordinates in each chart $\mathcal{O}_i$:
\begin{equation*}
\begin{aligned}
\Phi_i: (-\delta_i, \delta_i)\times (0, \epsilon_i)&\longrightarrow \mathcal{O}\cap \mathcal{O}_i\\
(y, z)&\mapsto \Phi_i(y,z):=(y, \varphi_i(y)+z).
\end{aligned}
\end{equation*}
A local basis is thus given by the vector fields $(\partial_{y_1}, \partial_{y_2}, \partial_z)$ with $\partial_{z}=\partial_{x_3}$. In these coordinates  we define the conormal vectors as follows:
\begin{equation*}
\begin{aligned}
Z_k^i&=\partial_{y_k}=\partial_{x_k}+\partial_{x_k}\varphi_i\partial_{z}, \qquad \text{for}~ k=1,2\\
Z_3^i&=\phi(z) \partial_{z},
\end{aligned}
\end{equation*}
where $\phi$ is smooth such that $\phi(0)=0$, $\phi(s)>0$ for $s>0$ (for example one can take  $\phi(s) = \frac{s}{1+s}$).

\begin{rmk}\label{understand}
{\rm 
In $\mathcal{O}_0$, $\|\cdot\|_{L^2_TH_{co}^m}$ defined in \rfb{norm} yields a control of the standard $H^m$-norm including time and space. In the case that  $\partial\mathcal{O}\cap \mathcal{O}_i\neq \emptyset$, there is no control of the normal derivatives near the boundary. 
}
\end{rmk}

We shall  use the notation $D^\beta$ to denote the full derivative either in the original coordinates
or in a coordinate patch near the boundary depending on the context: that is to say 
\begin{equation}\label{D}
	D^\beta=\partial_t^{\beta_0}\partial_{x_1}^{\beta_1}\partial_{x_2}^{\beta_2}\partial_{x_3}^{\beta_3},
\qquad   \mbox{ or } \qquad 
D^\beta =\partial_t^{\beta_0}\partial_{y_1}^{\beta_1}\partial_{y_2}^{\beta_2}\partial_z^{\beta_3},
\end{equation}
with $\beta=(\beta_0,\beta_1, \beta_2, \beta_3)\in \nline^{4}$ and $|\beta|=\beta_0+\beta_1+\beta_2+\beta_3$ to include time
and all the space derivatives.

\bigskip

Finally the notation $C[ \cdots ]$ will stand for a harmless  continuous non-decreasing function of each of its arguments.


\section{Formulation in a time independent domain}\label{changesec}
In this section, we aim to rewrite the fluid-rigid body system \rfb{ns-ob1}--\rfb{initialcon1} in a fixed spatial domain. 

Before doing this, we mention some conventions for the notation.
For a matrix $M$, we denote by $M^\intercal$ the transpose of $M$. If a function $f$ only depends on time $t$, we denote by $f'$ its derivative with respect to $t$; and if $f$ depends on both time and space, we use $\prt_t f$ for its partial time-derivative. We use ${\bf 0}$ in boldface to represent the matrix  consisting of zero entries, which can be row vector, column vector or square matrix depending on the context.

Assume that at $t=0$ the solid does not touch the boundary $\partial\Om$, i.e. $\dist(\partial\Sscr(0), \partial\Om)>0$. We take $R_0<\dist(\partial\Sscr(0), \partial\Om)$ such that
$$\Sscr(0)\subset \Sscr(0)+\overline{B(0, R_0)}\subset \Sscr(0)+\overline{B(0, 2R_0)}\subset \Om. $$
Based on this, we use  a cut-off function $\chi\in C_c^\infty(\rline^3)$ such that
\begin{equation}\label{cutoff}
\chi(y):=\left\{\begin{aligned}
&\m 1 \quad &y\in \Sscr(0)+\overline{B(0, R_0)},\\
&\m 0 \quad &y\in \rline^3\setminus (\Sscr(0)+\overline{B(0, 2R_0)}).
\end{aligned}\right.
\end{equation}
Now we define 
$$V(l(t), \omega(t), h(t),y)=\chi(y-h(0))\m u_\Sscr(t,y)=\chi(y)\m u_\Sscr(t,y), $$
where $h(0)=0$ and $u_\Sscr$ is the solid velocity introduced in \rfb{us}. With this we define the corresponding flow, denoted by $\phi$, which is the solution of the equation:
\begin{equation}\label{flow}
\left\{\begin{aligned}
&\partial_t\phi(t,x)=V(l(t), \omega(t), h(t), \phi(t,x)),\\
&\phi(0, x)=x.
\end{aligned}\right.
\end{equation}
Note that for small times $0<t<t_0$, 
the map $\phi(t,\cdot)$ defined on $\mathbb{R}^3$ is a diffeomorphism.
Moreover,  the solid $\Sscr(t)$ will  stay in $\Sscr(0)+\overline{B(0, R_0)}$. Using the definition \rfb{cutoff}, we see that, for $0<t<t_0$, $\phi(t,x)=h(t)+Q(t) x$ solves the equation \rfb{flow} for $ x\in  \Sscr(0)$ so that $\phi(t, \cdot)$ maps   $\mathcal{S}(0)$ onto $\mathcal{S}(t)$. 
Moreover, $\phi(t,\cdot)$ is the identity near the boundary of $\Omega$. We thus see 
that the flow $\phi(t, \cdot) $ actually gives us the desired change of variables:
\begin{equation}\label{change}
\begin{aligned}
\phi(t,\cdot): \Fscr(0) &\longrightarrow \Fscr(t)\\
x&\mapsto y=\phi(t,x).
\end{aligned}
\end{equation}

Based on the change of variable \rfb{change}, we derive in the following the new fluid-rigid body system in the fixed spatial domain $\Fscr(0)$. We denote the Jacobian matrix of the transform $\phi^{-1}$ by $\Jscr$, i.e. $\Jscr(t,x)=(\nabla \phi(t,x))^{-1}$. Then we introduce the following new unknowns:
\begin{equation}\label{function}
\begin{aligned}
&\overline u(t,x):=\Jscr(t,x)\m u(t,\phi(t,x)),
\qquad  &\overline u_\Sscr(t,x):=\Jscr(t,x) \m\partial_t\phi (t,x),\\
&\overline p(t,x):=p(t, \phi(t,x)),\quad &\overline s(t,x):=s(t,\phi(t,x)),\\
&\overline l(t):=Q(t)^\intercal\m l(t),\qquad &\overline \omega(t):=Q(t)^\intercal\m \omega(t).
\end{aligned}
\end{equation}
Moreover, we use the notation 
\begin{equation*}\label{alphaeta}
	\eta(t,x):=\rho(\overline p, \overline s),
	\qquad \alpha(t,x):=\rho c^2(\overline p, \overline s),
\end{equation*}
which are smooth functions with respect to $\overline p$ and $\overline s$.  

By using the chain rule we rewrite the fluid-rigid system \rfb{ns-ob1}--\rfb{initialcon1} in terms of the functions defined in \rfb{function}, for every  $x\in \Fscr(0)$, as follows:
\begin{equation}\label{newsystem}
\left\{\begin{aligned}
&\partial_t\overline p+\left((\overline u-\overline u_\Sscr)\cdot \nabla\right) \overline p+\alpha \div \overline u=-\alpha\m \text{tr}(\Jscr_{2}  \overline u),\\
&\partial_t\overline u+\left((\overline u-\overline u_\Sscr)\cdot \nabla\right)\overline u+ (\eta M)^{-1}\nabla\overline p= - \Jscr_{1}^{-1} \Jscr_{2}\m \overline u\cdot (\overline u-\overline u_\Sscr)-\nabla_{y}V\overline u,\\
&\partial_t\overline s+\left((\overline u-\overline u_\Sscr)\cdot \nabla\right)\overline s=0,\\
&m\m\overline l'(t)=m\overline l\times \overline \omega+\int_{\partial\Sscr(0)}\overline p\m n_0 \m \dd \Gamma,\\
&J_0 \m \overline \omega'(t)=(J_0\overline \omega)\times \overline \omega+\int_{\partial\Sscr(0)} x\times \overline p n_0\dd \Gamma,\\
&h'(t)=Q(t)\overline l(t),\\
&\partial_t\phi(t, x)=V(\overline l(t), \overline \omega(t), h(t),\phi(t,x)),\\
&\partial_t \Jscr_{1}(t,x)=\nabla_{y}V( \overline l(t), \overline \omega(t), h(t), \phi(t,x))\cdot \Jscr_{1},\\
& \partial_{t} \Jscr_{2}(t,x)= \nabla_{y}V(\overline l(t), \overline \omega(t), h(t), \phi(t,x))\cdot \Jscr_{2} + \nabla_{y}^2V( \overline l(t), \overline \omega(t), h(t), \phi(t,x))\cdot \begin{bmatrix} \Jscr_{1}  & \Jscr_{1}\end{bmatrix}, \\
&Q'(t)=Q(t)D_{\overline \omega},
\end{aligned}\right.
\end{equation}
with the boundary condition 
\begin{equation}\label{boun2}
\overline u\cdot n_0=\overline u_\Sscr\cdot n_0\qquad \forall \m x\in \partial\Fscr(0),
\end{equation}
and the initial data
\begin{equation}\label{data}
\begin{aligned}
	(\overline p, \overline u, \overline s)(0,x)&=( p_0, u_0, s_0)(x),\\
	(\overline l(0), \overline \omega(0), h(0), \phi(0, x), \Jscr_{1}(0, x), \Jscr_{2}(0, x), Q(0))&=(l_0, \omega_0, 0, x, I_{3\times 3}, {\bf 0}_{3\times 3 \times 3},  I_{3\times 3}).
\end{aligned}
\end{equation}
Note that in the extended system  \rfb{newsystem}, we necessarily have  by ODE uniqueness  $\Jscr_{1}(t,x)= \nabla \phi(t,x)$ and  $\Jscr_{2}(t,x)$
the three-indices tensor given  by 
$ \Jscr_{2}(t,x)= \nabla^2 \phi(t,x)$. In addition $\Jscr_{1}$ will be invertible as long as $\overline{l}$ and  $\overline{\omega}$ are well defined.
For time small enough so that $\phi(t, \cdot)$ is invertible, we must have 
 $\Jscr_{1}= \Jscr^{-1}.$
 
The skew-symmetric angular velocity matrix  $D_{\overline \omega}$ associated with $\overline \omega$ and the $3\times 3$ symmetric matrix $M$ are defined by
$$\forall x\in \rline^3,\quad D_{\overline \omega}x=\overline\omega\times x,\qquad  \qquad M=\Jscr^{-\intercal}\m\Jscr^{-1}.$$

In   the boundary condition \eqref{boun2},  $n_0(x)$ is the unit outer normal along $\partial\Fscr(0)(=\partial\Sscr(0)\cup \partial\Om)$.
Since the solid is not deformable, for $x \in \partial \Sscr(0)$ we have $\phi(t,x) \in \partial \Sscr(t)$ and $$n(t, \phi(t,x))=\Jscr_1(t,x)n_{0}(x)= Q(t) n_{0}(x),$$
which indicates that 
 the boundary condition  \eqref{boun2} follows from  \eqref{boundarycon1}.

\begin{rmk}\label{usexplain}
{\rm Note that thanks  to the definition \rfb{flow}, the vector field  $\overline u_\Sscr$ in \rfb{function} is globally defined in $\Om$ and the system \rfb{newsystem} includes in a compact way the boundary conditions on $\partial\Om$ and $\partial\Sscr(t)$. In particular for small times, we have
\begin{equation}
\label{usaubord}
\overline u_\Sscr(t,x)=\left\{
\begin{aligned}
&\overline l(t)+\overline\omega(t)\times x\quad &\qquad \forall \m  x\in \Sscr(0),\\
&0 &\qquad \forall \m  x\in \partial\Om.
\end{aligned}\right.
\end{equation}	
For the same reason,  we get  that near $\partial\Om$, we have for the  Jacobian $\Jscr$  that  $\Jscr=I_{3\times 3}$ and near $\partial\Sscr(0)$  that  $\Jscr=Q^\intercal(t)$. In particular this yields 
 in a vicinity of the whole boundary $\partial \mathcal{F}(0)$, 
\begin{equation}\label{Mbound}
	M(t, x)=\nabla\phi^\intercal\nabla\phi=I_{3\times 3} \quad \text{near } \partial\Fscr(0),
\end{equation}
since $Q(t)\in SO(3)$.

}
\end{rmk}

\begin{rmk}\label{manybodies}
{\rm To handle  the case of many bodies $(\Sscr_i)_{i=1}^N$,  assuming  that at $t=0$ the bodies are separated from each other and from the boundary $\partial\Omega$ by a positive distance.
For fixed $i$ we take the distance $R_i$ satisfying 
$ 0<R_i<\min\left\{\dist\left(\partial\Sscr_i(0), \partial\Omega\right), \dist\left(\partial\Sscr_i(0), \partial\Sscr_j(0)\right)\m\forall \m  j\neq i\right\}$, such that
$$\Sscr_i(0)\subset \Sscr_i(0)+\overline{ B(0,R_i)}\subset  \Sscr_i(0)+\overline{B(0, 2R_i)}\subset \Omega,$$
and 
$$\left(\Sscr_i(0)+\overline{B(0, 2R_i)}\right)\cap \left(\Sscr_j(0)+\overline{B(0, 2R_j)}\right)=\emptyset\qquad \forall \m  j\neq i. $$
Then for any $\Sscr_i$ we define the cut-off function $\chi_i\in C_c^\infty(\rline^3)$ as below:
\begin{equation*}
\chi_i(y):=\left\{\begin{aligned}
&1  &y\in \Sscr_i(0)+\overline{B(0, R_i)},\\
&0 &y\in \rline^3\setminus (\Sscr(0)+\overline{B(0, 2R_i)}).
\end{aligned}\right.
\end{equation*}
Finally, it suffices to replace the artificial velocity  $V$ in the Lagrange transform \rfb{flow} by $\sum_{i=1}^N\chi_i(y)u_{\Sscr_i}(t,y)$. This in particular allows that the bodies move in the fluid domain with distinct velocities.    
	}
\end{rmk}


Now we rewrite the system \rfb{newsystem} into the abstract form of a hyperbolic system coupled with an ODE. To do this, for $j=1,2,3$ we introduce some matrix notations 
$$U=\begin{bmatrix} \overline p \\ \overline u \\ \overline s \end{bmatrix},\qquad \qquad \qquad
\Theta=\begin{bmatrix} \overline l \\ \overline \omega \end{bmatrix}, \qquad \qquad \qquad \Upsilon=\begin{bmatrix} h \\ \phi \\ \Jscr_{1}  \\ \Jscr_{2} \\ Q \end{bmatrix}, $$
\begin{equation}\label{matrices}
\begin{aligned}
A^0(\Theta,\Upsilon,U)&=\begin{bmatrix}
		\alpha^{-1} & {\bf 0} & 0 \\ {\bf 0} & \eta M & {\bf 0}\\ 0 & {\bf 0}& 1
	\end{bmatrix},\\	
A^j(\Theta, \Upsilon ,U)&=
\begin{bmatrix}
\alpha^{-1}(\overline u^j-\overline u_\Sscr^j) & \begin{bmatrix} \delta_{1j} & \delta_{2j} & \delta_{3j} \end{bmatrix} & 0\\
\begin{bmatrix} \delta_{1j}& \delta_{2j}& \delta_{3j}\end{bmatrix}^\intercal & \eta M(\overline u^j-\overline u_\Sscr^j) & {\bf 0} \\
0 & {\bf 0} & \overline u^j-\overline u_\Sscr^j
\end{bmatrix},\\
B(\Theta, \Upsilon,U)&=\begin{bmatrix} 0  & \m \text{tr}(\Jscr_{2}
	\m \cdot) & 0	\\ 
	0&	\eta M \left((\overline u-\overline u_\Sscr)\Jscr_{1}^{-1} \m  \Jscr_{2}\cdot+ \nabla_{y} V\cdot\right) & 0 \\ 
	0 & {\bf 0}& 0\end{bmatrix},
\end{aligned}
\end{equation}
where $M_{ij}$ respresents the entry of the matrix $M$ in row $i$ and column $j$. 


With the above notation, we reformulate the fluid equations \rfb{newsystem}  into the following form, for  $x\in\Fscr(0)$,
\begin{equation}\label{first}
\left\{\begin{aligned}
&A^0(\Theta, \Upsilon,U)\partial_t U+\sum_{j=1}^3 A^j(\Theta, \Upsilon,U)\partial_j U+B(\Theta, \Upsilon,U)U= 0, \\
& U(0, x)=U_0(x)=\begin{bmatrix} p_0 & u_0 & s_0 \end{bmatrix}^\intercal(x),\\
&G(x) U=\overline u_\Sscr\cdot n_0 \qquad \forall \m  x\in \partial\Fscr(0),
\end{aligned}\right.
\end{equation}
where the boundary matrix $G(x)$ is given by
\begin{equation}\label{G}
G(x)=\begin{bmatrix} 0 & n_0(x)^\intercal& 0\end{bmatrix}.
\end{equation}
The boundary data $\overline u_\Sscr$ is determined by $(\Theta, \Upsilon)$ thanks to \eqref{usaubord} 
which  further solves
\begin{equation}\label{solidbar}
\left\{\begin{aligned}
&\partial_t\begin{bmatrix} \Theta \\ \Upsilon \end{bmatrix}= \begin{bmatrix} R_\Theta(\Theta, U) \\ R_{\Upsilon}(\Theta, \Upsilon) \end{bmatrix},\\
&\begin{bmatrix} \Theta(0) \\ \Upsilon(0, x) \end{bmatrix}=\begin{bmatrix} \Theta_0 \\ \Upsilon_0(x)\end{bmatrix},
\end{aligned}\right.
\end{equation}
where 
\begin{equation}
\label{Rthetadef}
R_\Theta(\Theta, U)=\left\{\begin{aligned}
&{\overline l}\times {\overline \omega}+m^{-1}\int_{\partial\Sscr(0)}\overline p\m n_0 \m \dd \Gamma,\\
&\overline \omega\times (J_0^{-1}\overline \omega)+J_0^{-1}\int_{\partial\Sscr(0)}x\times \overline p n_0\dd \Gamma,
\end{aligned}\right. 
\end{equation}
\begin{equation}
\label{Rupsilondef}
R_\Upsilon(\Theta, \Upsilon)=\begin{bmatrix} Q(t)\overline l(t) \\ V(\overline l(t),\overline \omega(t), h(t), \phi(t,x)) \\ \nabla_y V(\overline l(t), \overline \omega(t), h(t),  \phi(t,x))\cdot\Jscr_{1}\\ 
\nabla_{y}V(\overline l(t), \overline \omega(t), h(t), \phi(t,x))\cdot \Jscr_{2} + \nabla_{y}^2V( \overline l(t), \overline \omega(t), h(t), \phi(t,x))\cdot [\Jscr_{1}, \Jscr_{1}]\\
  Q(t)D_{\overline \omega} \end{bmatrix}.
\end{equation}
and the initial data in \rfb{solidbar} is
\begin{equation}\label{initialdata}
\begin{aligned}
\Theta_0=\begin{bmatrix} l_0 \\ \omega_0 \end{bmatrix}, \qquad \Upsilon_0(x)=\begin{bmatrix} 0 \\ x \\ I_{3\times 3}, \\ {\bf 0}_{3\times 3 \times 3} \\ I_{3\times 3} \end{bmatrix}.
\end{aligned}
\end{equation}

 Recalling the definition of $\overline u_\Sscr$ in \rfb{function} and in particular its simple form on the boundary \eqref{usaubord}, we see that the fluid equation \rfb{first} is coupled  with the solid equations \rfb{solidbar} through $\overline u_\Sscr$ in the coefficients  of the matrices $A^0$, $A^j$, $B$ and at the boundary $\partial\Fscr(0)$. 
 An important observation is that  $\overline u_\Sscr$ is smooth in space, while its time regularity is limited since it depends on time
 through $\overline{l}$, $\overline{\omega}$ which themselves depend on the pressure $\overline{p}(t)$ at the boundary.

Finally,  we observe that   the  normal boundary matrix $A_n(\Theta, \Upsilon,U):=\sum_{i=1}^3A^j(\Theta, \Upsilon,U)n_{0, j}$ at $\partial\Fscr(0)$
is given by:
\begin{equation*}\label{nonchar}
A_n(\Theta, \Upsilon, U)=\begin{bmatrix} 0 & n_0^\intercal & 0 \\
	n_0 & {\bf 0} & {\bf 0} \\
	0 & {\bf 0} & 0 
\end{bmatrix}.
\end{equation*}
The multiplicity of the zero eigenvalue is $d=3$ 
which implies that the boundary of the system \rfb{first} is characteristic (both the external boundary and the solid boundary).

\medskip

Now that we have reformulated the system, we can discuss the compatibility conditions for the coupled system \rfb{first}--\rfb{solidbar}. 

For smooth solutions of the system \rfb{first}--\rfb{solidbar}, $\partial_t^k U(0)$, $\partial_t^k\overline l(0)$ and $\partial_t^k\overline \omega(0)$ are determined  inductively by $U_0$, $l_0$ and $\omega_0$ by using the system \eqref{first} and \rfb{solidbar}. 
Differentiating the equations $k$-times with respect to $t$, we have a recursion relation at $t=0$:
\begin{align}
&
\partial_t^k U(0)=-\sum_{i=0}^{k-1}\begin{pmatrix} k-1 \\ i \end{pmatrix}\left\{\partial_t^i\left((A^0)^{-1}A^j\right)(\Theta, \Upsilon,U)\partial_j+\partial_t^i\left((A^0)^{-1}B\right)(\Theta, \Upsilon, U)\right\}(0)\partial_t^{k-1-i}U(0),\nonumber\\
&\partial_t^k\overline l(0)=\sum_{i=0}^{k-1}\begin{pmatrix} k-1 \\ i \end{pmatrix}\left\{\partial_t^i\overline l\times \partial_t^{k-1-i}\overline \omega\right\}(0)+\frac{1}{m}\int_{\partial\Sscr(0)}\partial_t^{k-1}\overline p(0) n_0\dd\Gamma,\label{compatibegin}\\
&\partial_t^k\overline \omega(0)=\sum_{i=0}^{k-1}\begin{pmatrix}k-1 \\ i \end{pmatrix}\left\{\partial_t^i\overline \omega\times (J_0^{-1}\partial_t^{k-1-i}\overline \omega)\right\}(0)+J_0^{-1}\int_{\partial\Sscr(0)}x\times \partial_t^{k-1}\overline p(0)n_0\dd\Gamma.\nonumber
\end{align}
Observe that by using  the equation for $\Upsilon$ in \eqref{solidbar} and the definition of $R_{\Upsilon}$ in \eqref{Rupsilondef}, 
we can express the successive derivative $\partial_{t}^k \Upsilon(0)$ in terms of the  derivatives $(\partial_{t}^j \Theta (0))_{0\leq j \leq k-1}$, which in turn can be computed by using the above expression.
Together with \rfb{data}, we notice that $\partial_t^kU(0)$, $\partial_t^k\overline l(0)$ and $\partial_t^k\overline \omega(0)$ only depend on the initial data $(U_0, l_0, \omega_0)$.
We thus define a vector from the resulting expressions:
\begin{equation*}
\mathcal{I}^k( U_{0}, l_{0}, \omega_{0})(x):= (\mathcal{I}^k_{p}, \mathcal{I}^{k}_u, \mathcal{I}^k_{s},  \mathcal{I}^{k}_{l}, \mathcal{I}^{k}_\omega)^\intercal(U_{0}, l_{0}, \omega_{0}) := 
(\partial_{t}^kU(0), \partial_{t}^k\overline{l}(0), \partial_{t}^k \overline{\omega}(0))^\intercal.
\end{equation*}

The boundary condition in \rfb{first} implies that a smooth enough solution should satisfy  $\partial_t^k\overline u(t)\cdot n_0=(\partial_t^k\overline l+\partial_t^k\overline \omega\times x)\cdot n_0$ for $x\in\partial\Sscr(0)$ and $\partial_t^k\overline u(t)\cdot n_0=0$ for $x \in \partial \Omega$. At $t=0$ and $x\in\partial\Fscr(0)$, smooth enough solutions must therefore satisfy 
\begin{equation}\label{compat}
\mathcal{I}^k_{u}\cdot n_0=	
\left\{\begin{aligned}
&\bigl(\mathcal{I}^k_{l}+ \mathcal{I}^k_{\omega}\times x\bigr)\cdot n_0 \, &x \in\partial \Sscr(0),\\ & 0\, &x \in \partial \Omega.
\end{aligned}\right.
\end{equation}

To avoid confusion, from now on we use the notation $\mathcal{I}^k_f$ for the expressions involving the initial data corresponding to  the computation of the   successive time derivatives at $t=0$ arising from the recursion of the equation, and the usual notation $\partial_t^k f(0)$ for the $k$-th time derivative of $f$ evaluated at $t=0$.

\begin{definition}[Compatibility conditions]\label{compatibility}
Let $m\geq 3$ be an integer. For the initial boundary value problem \rfb{first}--\rfb{solidbar}, we say that the data $(U_0, l_0, \omega_0)$ with $U_0\in H^m(\Fscr(0))$ satisfy the compatibility condition of order $m-1$  if\  \rfb{compat} is verified for every $k \leq m-1$. 
\end{definition}

\section{Non-characteristic approximation}\label{approx}

  In this section we shall  approximate \rfb{newsystem} by another  system with a non-characteristic boundary.   We first define the approximate model by taking into account the procedure of modifying  the compatibility conditions, then we explain how  the existence theory for the coupled  approximate system can be settled by using the better regularity of the pressure.

In order to justify energy estimates in the following, it is necessary to have smoother initial data. Inspired by \cite{schochet1986compressible} we approximate the initial data $U_0\in H^m(\Fscr(0))$ by $U_0^n=\begin{bmatrix} p_0^n & u_0^n & s_0^n \end{bmatrix}^\intercal\in H^{m+2}(\Fscr(0))$, such that $U_0^n \in \mathcal{C}$ where $\mathcal{C} \subset \mathcal{U}$ is a fixed compact subset (independent of $n$),  $(U_0^n, l_0,\omega_0)$ satisfy the compatibility condition of order $m$ such that the convergence $U^n_0\longrightarrow U_0$ happens in $H^m(\Fscr(0))$\footnote{More details about this approximation can be found in Appendix A.}.  With the initial data $(U_0^n, l_0, \omega_0)\in H^{m+2}(\Fscr(0))\times (\rline^3)^2$ we derive from
the equations \rfb{first}--\rfb{solidbar} that $\mathcal{I}^k( U_{0}^n, l_{0}, \omega_{0})\in H^{m+2-k}(\Fscr(0))\times (\rline^3)^2$, for $0\leq k\leq m+2$.  According to Theorem 2.5.7 in \cite{hormander1963linear}, for  every $T_L>0$ there exists $\tilde U^n=\begin{bmatrix} \tilde p^n & \tilde u^n & \tilde s^n\end{bmatrix}^\intercal\in H^{m+2}((0, T_L)\times \Fscr(0))$ satisfying 
\begin{equation}\label{sourceapprox}
\partial_t^k\tilde U^n(0)=\mathcal{I}^k_{U}(U^n_{0}, l_0, \omega_0)  \qquad\forall \m 0\leq k\leq {m+1},\m\m x\in\Fscr(0).
\end{equation}
Moreover, we extend the normal vector $n_0$ in \rfb{boun2} into a smooth vector field on  the whole domain $\Fscr(0)$ denoted by  $\nu$, 
we thus have   $\nu\in C^\infty(\Fscr(0))$ and $\nu|_{\partial\Fscr(0)}=n_0$.

Now we introduce the approximate system. We make a perturbation directly in the notation defined in the previous section as:
$$U^\eps=\begin{bmatrix} \overline p^\eps \\ \overline u^\eps \\ \overline s^\eps \end{bmatrix},\qquad \qquad \qquad
\Theta^\eps=\begin{bmatrix} \overline l^\eps \\ \overline \omega^\eps \end{bmatrix}, \qquad \qquad \qquad \Upsilon^\eps=\begin{bmatrix} h^\eps \\ \phi^\eps \\ \Jscr_{1}^\eps \\ \Jscr_{2}^\eps \\ Q^\eps \end{bmatrix}.$$
We construct the approximate system in the following form:
\begin{equation}\label{forcc}
\left\{\begin{aligned}
&A^0(\Theta^\eps, \Upsilon^\eps,U^\eps)\partial_t U^\eps+\sum_{j=1}^3 \Ascr^j(\Theta^\eps, \Upsilon^\eps, U^\eps)\partial_j U^\eps+B(\Theta^\eps, \Upsilon^\eps,U^\eps)U^\eps= F(t, x), \\
& U^\eps(0, x)=U_0^n(x)=\begin{bmatrix} p^n_0 & u^n_0 & s^n_0 \end{bmatrix}^\intercal(x),\\
&G(x) U^\eps=\overline u_{\Sscr}^\eps\cdot n_0 \qquad \forall \m  x\in \partial\Fscr(0).\\
\end{aligned}\right.
\end{equation}
The boundary data $\overline u_\Sscr^\eps$ is  defined  following \eqref{function} by 
$$  \overline u_\Sscr^\eps= \Jscr_{1}^{\eps}(t,x)^{-1}V(l^\eps(t), \omega^\eps(t), h^\eps(t), \phi^\eps(t,x)),$$
which is thus determined by  $\Theta^\eps$ and $\Upsilon^\eps$ satisfying
\begin{equation}\label{solideps}
\left\{\begin{aligned}
&\partial_t\begin{bmatrix} \Theta^\eps \\ \Upsilon^\eps \end{bmatrix}= \begin{bmatrix} R_{\Theta}(\Theta^\eps, U^\eps) \\ R_{\Upsilon}(\Theta^\eps, \Upsilon^\eps) \end{bmatrix},\\
&\begin{bmatrix} \Theta^\eps(0) \\ \Upsilon^\eps(0, x) \end{bmatrix}=\begin{bmatrix} \Theta_0 \\ \Upsilon_0(x)\end{bmatrix}.
\end{aligned}\right.
\end{equation}
In \rfb{forcc}, $G(x)$ has been introduced in \rfb{G}, the matrices $\Ascr^j(\Theta^\eps, \Upsilon^\eps,U^\eps)$ and $F(t,x)$ are in particular

\begin{align}
 \label{matrice2}
 \Ascr^j(\Theta^\eps, \Upsilon^\eps,U^\eps)&=A^j(\Theta^\eps, \Upsilon^\eps,U^\eps)+	\eps\nu^j
\begin{bmatrix}
		1 & {\bf 0} & 0\\
		{\bf 0}& I_{3\times 3} & {\bf 0}\\
		0 & {\bf 0} & 1
			\end{bmatrix},
\\
\label{Ftheta}	F(t,x)&=\begin{bmatrix}
		\eps(\nu\cdot\nabla)\tilde p^n \\ \eps(\nu\cdot\nabla)\tilde u^n \\ \eps(\nu\cdot\nabla) \tilde s^n 
	\end{bmatrix}.
\end{align}
In \rfb{solideps}, $R_{\Theta}$ and $R_{\Upsilon}$ are defined  in \eqref{Rthetadef} and \eqref{Rupsilondef}, respectively. The initial data for  $\Theta^\eps(0)$ and $\Upsilon^\eps(0, x)$  are still given by  \rfb{initialdata}, which are independent of $\eps$.


\medskip

In the remaining part of this section, we focus on establishing the well-posedness of the coupled system \eqref{forcc}--\eqref{solideps}. Roughly speaking, the strategy is to fix the boundary data and the coefficients depending on the solid
motion in the matrices  $A^0(\Theta^\eps, \Upsilon^\eps,U^\eps)$, $\Ascr^j(\Theta^\eps, \Upsilon^\eps,U^\eps)$ and $B(\Theta^\eps, \Upsilon^\eps,U^\eps)$ in \rfb{forcc}, i.e. for given $\overline l^\eps$ and $\overline\omega^\eps$, we solve  the fluid system  \rfb{forcc} and  get  a unique solution $U^\eps$.  In particular, this provides the pressure on the boundary of the solid, which is then used to update the solid's motion in \rfb{solideps}.
The existence of a solution to the coupled system \eqref{forcc}--\eqref{solideps} is thus obtained via a fixed-point argument applied to this iterative procedure.

Note that in the system \eqref{forcc}--\eqref{solideps} for any given initial data $U_{0}^n$, $l_{0}$, $\omega_{0}$ and
source term $F$, we can compute the successive time derivatives of $U^\eps$ at $t=0$, by using the new system \eqref{forcc}
 and thus get an appropriate notion of compatibility conditions.
For every $0\leq k\leq m$, we get that  $\partial_t^k U^\eps(0)$ can be expressed  as below:
\begin{equation*}
\begin{aligned}
	\partial_t^k U^\eps(0)=&-\sum_{i=0}^{k-1}\begin{pmatrix} k-1 \\ i \end{pmatrix}\left\{\partial_t^i\left((A^0)^{-1}\Ascr^j\right)(\Theta^\eps, \Upsilon^\eps,U^\eps)\partial_j+\partial_t^i\left((A^0)^{-1}B\right)(\Theta^\eps, \Upsilon^\eps,U^\eps)\right\}(0)\partial_t^{k-1-i}U^\eps(0)\\
	&+\sum_{i=0}^{k-1}\begin{pmatrix} k-1 \\ i \end{pmatrix}\partial_t^i(A^0)^{-1}(\Theta^\eps, \Upsilon^\eps,U^\eps)(0)\partial_t^{k-1-i}F(0).
\end{aligned}
\end{equation*}
While $\partial_t^k\overline l^\eps(0)$ and $\partial_t^k\overline \omega^\eps(0)$ are defined similarly as in \rfb{compatibegin}, by replacing $\overline p$, $\overline l$ and $\overline \omega$ by $\overline p^\eps$, $\overline l^\eps$ and $\overline \omega^\eps$, respectively, 
and  the derivative of $\Upsilon^\eps$ at $t=0$ can be expressed in terms of the successive derivatives of $\Theta^\eps$ at zero
with the same expressions as before.
This yields as above  a sequence of functions  $\mathcal{I}^{k, \eps}(U^n_{0}, l_{0}, \omega_{0}, F^k)$ depending
only on the initial data and  $F^k$ defined by $(F(0), \partial_{t}F(0), \cdots, \partial_{t}^{k-1} F(0))$.
 The compatibility conditions of order $m$ for the system \eqref{forcc}--\eqref{solideps} are, for every $k \leq m$,
 \begin{equation} \label{compateps}
  \mathcal{I}^{k, \eps}_{u} \cdot n_{0}=	\left\{\begin{aligned}
 	&(\mathcal{I}^{k, \eps}_{l} + \mathcal{I}^{k, \eps}_{\omega} \times x) \cdot n_{0}, \,& x
 	\in \partial\mathcal{S}(0),\\
 	&0, \, &x \in \partial \Omega.
 	\end{aligned}\right.
 \end{equation}

Recalling that the source term $F$ is given by \eqref{Ftheta}, with the initial data $U_0^n$, $ l_0$, $\omega_0$, we observe that
$$ \mathcal{I}^{k, \eps}(U_{0}^n, l_{0}, \omega_{0}, F^k)=\mathcal{I}^k(U_{0}^n, l_{0}, \omega_{0}).$$
This implies that the compatibility conditions of order $m-1$ and $m$ for the original coupled system \rfb{first}--\rfb{solidbar} are also satisfied.

Our goal is to prove the following  existence result for the  system \rfb{forcc}--\rfb{solideps}.

To state the result, it  is convenient to define another open set $\mathcal{V}$ such that $\overline{\mathcal{V}}$
 is compact and
\begin{equation*}
\label{Vposition}
\mathcal{C} \subset \mathcal{V} \subset \overline{\mathcal{V}} \subset \mathcal{U}
\end{equation*}
and to define  for $U= (p, u, s) \in \mathbb{R}^{d+2}$,   $\pi U$ the projection on the first and
last component so that $\pi U= (p, s)$.

\begin{thm}\label{existeps}
For every $\eps\in (0,1]$ fixed and $n\in\nline$ fixed, let $U_0^n\in H^{m+1}(\Fscr(0))$,  $F \in H^{m+1}((0, T_L)\times \mathcal{F}(0))$  
 such that  $(U_0^n, l_0, \omega_0, F)$ satisfies the compatibility condition of order $m$ in \rfb{compateps}  and that for every $x\in \mathcal{F}(0),$  $\pi U_{0}^n(x) \in \mathcal{C}$.
Then there exists $T^\varepsilon>0$,  such that the system \rfb{forcc}--\rfb{solideps} has  a unique solution $(U^\eps, \Theta^\eps, \Upsilon^\eps)$ satisfying
$$ (U^\eps, \Theta^\eps)\in  (X^{m+1}([0, T^\varepsilon]; \Fscr(0)))^3 \times (C^{m+1}[0, T^\varepsilon])^2$$
and $\pi U^\eps (t,x) \in \mathcal{V}$, for every $(t,x) \in (0, T^\eps)\times \mathcal{F}(0).$
Moreover, for every $T\leq T^\varepsilon$, we have the estimate
\begin{equation}
\label{continuationcriterion}
 \| \Theta^\eps \|_{C^{m+1}([0, T])} + \| U^\varepsilon\|_{X^{m+1}_{T}}
   \leq C_{\varepsilon}\left[ \|U_{0}^n \|_{H^{m+1}}, \|F \|_{H^{m+1}((0, T) \times \mathcal{F}(0))} ,  \| \Theta^\eps \|_{C^{m}([0, T])}, \| U^\varepsilon\|_{X^m_{T} }\right].
\end{equation}
\end{thm}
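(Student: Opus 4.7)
The strategy is a decoupling fixed-point argument: given a candidate solid trajectory $(\Theta, \Upsilon)$, one first solves the quasilinear hyperbolic IBVP \eqref{forcc} (whose coefficients and boundary datum are determined by $(\Theta, \Upsilon)$) to obtain $U^\varepsilon$, and then uses the pressure trace $\overline p^\varepsilon|_{\partial\Sscr(0)}$ to update the solid via the ODE \eqref{solideps}. The key structural point is that the perturbation $\eps \nu^j I$ in \eqref{matrice2} turns the normal boundary matrix into
\[
\sum_{j=1}^3 \Ascr^j n_{0,j} \;=\; A_n + \eps \m|\nu|^2 I \;=\; A_n + \eps I \quad \text{on } \partial\Fscr(0),
\]
which is invertible. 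The boundary is therefore non-characteristic, which provides full trace control of $U^\eps$ on $\partial\Fscr(0)$---precisely what is needed to drive the solid ODE without loss of derivative.

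\emph{Fluid solver.} Fix a ball $\mathcal{B}_R$ of radius $R$ in $(C^{m+1}[0,T^\star])^2$ centered at the constant trajectory $(\Theta_0, \Upsilon_0)$, consisting of trajectories whose time derivatives at $t=0$ match the values $\mathcal{I}^{k,\eps}_{l}, \mathcal{I}^{k,\eps}_{\omega}$ predicted by the full coupled system up to order $m$. For any $(\Theta, \Upsilon)\in \mathcal{B}_R$, the resulting quasilinear hyperbolic IBVP with non-characteristic boundary can be solved by the standard theory \cite{rauch1974differentiability, majda1983stability, metivier2001stability, benzoni2006multi} through a Picard iteration linearized around a previous fluid iterate. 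Provided the compatibility conditions \eqref{compateps} hold at $t=0$, one obtains on some $[0,T(R)]$ a unique $U^\eps \in X^{m+1}([0,T(R)];\Fscr(0))$ with $\pi U^\eps \in \mathcal{V}$ and the a priori bound
\[
\|U^\eps\|_{X^{m+1}_{T(R)}} \;\leq\; C_\eps\bigl[\|U_0^n\|_{H^{m+1}}, \|F\|_{H^{m+1}}, R\bigr],
\]
together with the corresponding $H^{m+1/2}$ trace control of $\overline p^\eps$ on $\partial\Fscr(0)$.

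\emph{Solid update and outer fixed point.} Given $U^\eps$, the right-hand side $R_\Theta(\Theta, U^\eps)$ in \eqref{solideps} depends linearly on the pressure trace on $\partial\Sscr(0)$; its $C^{m}$ time regularity, inherited from the trace bound above together with the equation \eqref{forcc} itself (used to convert normal derivatives into time and tangential ones when counting regularities), is enough to integrate the ODE and obtain $\tilde\Theta \in C^{m+1}[0,T(R)]$, whence $\tilde\Upsilon \in C^{m+1}$ follows from the smooth structure of $R_\Upsilon$. Set $\Psi:(\Theta,\Upsilon)\mapsto(\tilde\Theta,\tilde\Upsilon)$. For $R$ large enough depending on the data, and $T^\eps\leq T(R)$ small enough, $\Psi$ sends $\mathcal{B}_R$ into itself and contracts in a weaker norm (say $C^1$): the difference of two fluid iterates solves a linear non-characteristic hyperbolic IBVP whose $L^2$ energy estimate depends linearly on $T$, and the ODE step contributes a matching Lipschitz factor. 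Banach's fixed-point theorem then yields the unique $(\Theta^\eps, \Upsilon^\eps)$ whose associated $U^\eps$ solves the full coupled system, and the high-regularity class $X^{m+1}\times (C^{m+1})^2$ is preserved because $\mathcal{B}_R$ is weakly-$*$ closed in the contracting topology.

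\emph{Continuation estimate and main obstacle.} The bound \eqref{continuationcriterion} follows by applying the a priori fluid estimate above to the actual solution, and then controlling $\|\Theta^\eps\|_{C^{m+1}}$ from \eqref{solideps} by $\|\Theta^\eps\|_{C^m}$ plus one extra time derivative of the pressure trace, itself controlled by $\|U^\eps\|_{X^m_T}$ through standard trace inequalities. The main obstacle is precisely this matching of regularities across the coupling: the pressure trace drives the solid ODE, while the solid trajectory parametrizes the coefficients and boundary datum of the PDE. Without the $\eps$-perturbation the boundary is characteristic, only tangential trace estimates on $\overline p$ are available, and a naive iteration would lose a half-derivative at each step---which is exactly why the non-characteristic approximation \eqref{forcc} has to be introduced at this stage.
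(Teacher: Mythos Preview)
Your high-level strategy matches the paper's: decouple the fluid from the solid, solve the non-characteristic quasilinear IBVP \eqref{forcc} for a prescribed solid trajectory, then close an outer fixed point on the solid variable $\Theta$. The two routes differ in how the fixed point is obtained and in how the inhomogeneous boundary condition is handled, and both points deserve comment.

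\emph{Fixed-point mechanism.} The paper does \emph{not} prove that the solid map $\Lambda$ contracts in a weak norm. What it actually establishes (Lemma~\ref{difference}) is the mixed-norm bound
\[
\|\Lambda(\Theta_1)-\Lambda(\Theta_2)\|_{C^0[0,T]}\;\le\;C_\eps[M_\eps,R]\,T\,\|\Theta_1-\Theta_2\|_{C^m[0,T]},
\]
with the stronger $C^m$ norm on the right. This is not a Banach contraction. Instead, the paper shows equicontinuity of $\partial_t^{m+1}\Lambda(\Theta)$ via \eqref{estforascoli}, extracts a $C^{m+1}$-convergent subsequence of the Picard iterates by Arzel\`a--Ascoli, and uses the continuity estimate above (now legitimate since convergence holds in $C^m$) to identify the limit as a fixed point. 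Your contraction route is plausible---the $C^m$ on the right of Lemma~\ref{difference} can likely be sharpened to $C^0$ since the boundary datum $\overline u_\Sscr[\Theta]$ and the $\Theta$-dependent coefficients depend on $\Theta$ only through its value, not its derivatives---but you would need to carry this out, and your appeal to weak-$*$ closedness of a $C^{m+1}$ ball in the $C^1$ topology is itself an Ascoli-type fact. The Ascoli route is cleaner here because it avoids checking that a specific norm is self-mapped by the difference estimate.

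\emph{Fluid solver with inhomogeneous boundary data.} You invoke ``standard theory'' for the non-characteristic IBVP, but the boundary condition $G(x)U^\eps=\overline u_\Sscr[\tilde\Theta]\cdot n_0$ is inhomogeneous and time-dependent through $\tilde\Theta\in C^{m+1}$. The usual way to reduce to homogeneous conditions is the shift $\hat U=U-U^c$ with $U^c=(0,\overline u_\Sscr[\tilde\Theta],0)^\intercal$; however, $\partial_t U^c$ then appears as a source and costs one extra time derivative of $\tilde\Theta$. The paper handles this (Proposition~\ref{schochet}, Step~1) by first mollifying $\tilde\Theta$ to a $C^{m+2}$ trajectory $\hat\Theta$ that preserves the compatibility conditions, solving via \cite{schochet1986compressible}, deriving the estimates \eqref{bornuschochetnonunif}--\eqref{bornuschochetnonunif2} uniformly in the mollification, and passing to the limit. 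This is a genuine technical step that your proposal skips.
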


\begin{rmk}
{\rm
Note that the existence time in the above result depends on $\eps$. 
From the local existence result and the estimate \eqref{continuationcriterion}, we deduce that the solution can be continued in $X^{m+1}_{T}$
as long as  $\pi U^\eps (t,x) \in  \mathcal{U}$ and that the norm $ \| \Theta^\eps \|_{C^{m}([0, T])}+ \| U^\varepsilon\|_{X^m_{T} }$
does not blow up.
}
\end{rmk}

Based on the structure of the system  \rfb{forcc}--\rfb{solideps},   the component  $\Upsilon^\eps$ is  completely determined  by  $\Theta^\eps$ from standard ODE theory
with  $\Jscr_{1}$ invertible.  Moreover,  we have for every $T$, $\alpha$ and $m$  that 
\begin{equation*}
\label{UepsThetaeps}
 \sup_{x} \| \partial^\alpha_{x} \Upsilon^\eps \|_{C^{m+1}[0, T]} \leq C[T,  \|\Theta^\eps\|_{C^m[0, T]}].
 \end{equation*}
This is the reason why we did not emphasize the regularity of $\Upsilon^\eps$ in Theorem \ref{existeps}.
In the remaining part of the section, we shall use the following  simplified notation:
\begin{equation}
\label{defmatricesfix}
 \begin{aligned}
&  \mathcal{A}^j[\Theta, V](t,x)= \mathcal{A}^j(\Theta(t), \Upsilon(t,x), V(t,x)),  \\
&   A^0[\Theta, V](t,x)=A^0(\Theta(t), \Upsilon(t,x), V(t,x)),  \\
&  B[\Theta, V](t,x)= B(\Theta(t), \Upsilon(t,x), V(t,x)).
 \end{aligned}
 \end{equation}
 
 Now we focus on the proof of Theorem \ref{existeps}.
  
   \subsection{Existence and regularity for a  given solid motion}

We now consider the nonlinear system: 
 \begin{equation}\label{modelnonlinear}
\left\{\begin{aligned}
	&A^0[ \tilde{\Theta}, U] \partial_{t} U+\sum_{j=1}^3 \Ascr^j[ \tilde{\Theta},U] \partial_j U +B[ \tilde{\Theta},
	U] U= F(t,x), \\
	& U(0,x)=U_0^n (x),\\
	&G(x) U=\overline u_\mathcal{S}[ \tilde \Theta]\cdot n_0 \qquad \forall \m  x\in \partial\Fscr(0),
\end{aligned}\right.
\end{equation} 
where $\tilde{\Theta}$ is  assumed to be given by $\begin{bmatrix} L & W \end{bmatrix}^\intercal$ so  that $\overline u_\Sscr[\tilde \Theta]=L+W\times x $ at $\partial\Fscr(0)$.

\begin{prop}\label{schochet}
Let $m\geq 3$. For every $\eps\in (0, 1)$,  $U_{0}^n \in H^{m+1}$,  $F \in H^{m+1}((0, T_{L}) \times \mathcal{F}(0))$,
as in Theorem \ref{existeps} and 
   $R>0$.  There exists
 $T_{0}>0$ (depending on $\eps$) such that 
   for every  $\tilde{\Theta}\in C^{m+1}([0, T_0])$ satisfying
\begin{equation*}
\label{compatintermediate} \partial_{t}^k \tilde{\Theta}(0)= \mathcal{I}^{k, \varepsilon}_{\Theta}(U_{0}^n, \Theta_{0}, F^k)\qquad \forall \m k \leq m, 
\end{equation*}
and 
\begin{equation*}
\label{bornethetatilde}
\| \tilde{\Theta}\|_{C^{m+1}[0, T_{0}]} \leq R, 
\end{equation*}
There exists  a  unique solution $U_{\Theta}^\eps \in X^{m+1}_{T_{0}}$  of \eqref{modelnonlinear}
such that $ U_{\Theta}^\eps \in \mathcal{V}$.
Moreover, for every $T \in [0, T_{0}],$  we have the estimates: 
\begin{equation}
\label{bornuschochetnonunif}
\begin{aligned}
& \|U_\Theta^\eps\|_{L^\infty_{T} H^{m+1}_{co}}^2 + \|U_\Theta^\eps\|_{H^{m+1}((0, T)\times \partial\Fscr(0))}^2\\
&\leq C_\eps\left[\|U_\Theta^\eps\|_{X^m_T}, \|\tilde\Theta\|_{C^m[0, T]}\right]\left(\|U_0^n\|_{H^{m+1}(\Fscr(0))}^2+\|F\|_{H^{m+1}((0, T)\times \Fscr(0))}^2\right.\\
&\hspace{6cm} \left.+ \int_0^T(1+\|\tilde\Theta\|_{C^{m+1}[0, t]}^2+\| U_\Theta^\eps\|_{X^{m+1}_t}^2)\dd t\right),
  \end{aligned}
  \end{equation}
  and
 \begin{equation}
\label{bornuschochetnonunif2}
 \|U_\Theta^\eps\|_{X^{m+1}_{T}}  \leq C_{\varepsilon}\left[  \| \tilde \Theta\|_{C^m[0, T]}, \| U_{\Theta}^\varepsilon\|_{X^m_{T} }, \|U_0^n\|_{H^{m+1}(\Fscr(0))} \right] \Bigl( 
 \|U_{\Theta}^\varepsilon\|_{X^m_{T}} + \|U_{\Theta}^\varepsilon\|_{L^\infty_{T}H^{m+1}_{co}} + \|F\|_{X^m_{T}} \Bigr). 
  \end{equation}
  In particular, there exists $M_{\varepsilon}= M_{\eps}[R,   \|U_{0}^n\|_{H^{m+1}},  \|F\|_{H^{m+1}((0, T)\times \mathcal{F}(0))}]$ so that
  \begin{equation}
  \label{latroisieme} \|U_\Theta^\eps\|_{X^{m+1}_{T}}  + \|U_\Theta^\eps\|_{H^{m+1}((0, T)\times \partial\Fscr(0))} \leq M_{\varepsilon}.
  \end{equation}
\end{prop}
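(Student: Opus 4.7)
The plan is to view \eqref{modelnonlinear} as a quasilinear symmetric hyperbolic initial boundary value problem for which the $\eps$-perturbation in \eqref{matrice2} renders the boundary non-characteristic. Indeed, since $\nu|_{\partial\Fscr(0)}=n_0$, the normal boundary matrix on $\partial \Fscr(0)$ becomes
\begin{equation*}
\Ascr_n[\tilde\Theta,U]=\sum_{j=1}^3 \Ascr^j[\tilde\Theta,U]\m n_{0,j}=A_n[\tilde\Theta, U]+\eps\m I_{5\times 5},
\end{equation*}
so that on $\partial\Fscr(0)$, $\Ascr_n$ is invertible with $\|\Ascr_n^{-1}\|\leq C/\eps$. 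This is the only place the regularization plays a role, but it is crucial: it allows both full $L^2$ control of $U$ on $\partial\Fscr(0)$ via the energy estimate and the conversion of tangential derivatives into normal ones.

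First I would construct the solution by a Picard iteration. For $V\in X^{m+1}_{T_0}$ with $\pi V\in \Vscr$ and $V(0)=U_0^n$, solve the linear problem
\begin{equation*}
A^0[\tilde\Theta,V]\partial_t U+\sum_{j=1}^3 \Ascr^j[\tilde\Theta,V]\partial_j U+B[\tilde\Theta,V]U=F,\quad GU=\overline u_\Sscr[\tilde\Theta]\cdot n_0,\quad U(0)=U_0^n,
\end{equation*}
using the standard theory for non-characteristic symmetric hyperbolic IBVP (e.g. Rauch--Massey, Majda). Before that, one must first homogenize the boundary condition: subtract a smooth lift $U^\flat$ constructed from $\tilde\Theta$ (using that $\overline u_\Sscr[\tilde\Theta]\in C^{m+1}$ on $\partial\Fscr(0)$ and the extension of $\nu$) that matches the successive traces $\partial_t^k U^\flat(0)|_{\partial\Fscr(0)}$ prescribed by the compatibility conditions \rfb{compateps} up to order $m$. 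The hypothesis on $\tilde\Theta$ guarantees that these traces are exactly $\partial_t^k(\overline u_\Sscr[\tilde\Theta]\cdot n_0)|_{t=0}$, so the shifted unknown $U-U^\flat$ satisfies a linear hyperbolic system with homogeneous Dirichlet-like boundary data and compatible initial data of the required order. This delivers $U\in X^{m+1}_{T_0}$ for each iterate.

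Next I would derive the a priori estimates \rfb{bornuschochetnonunif} and \rfb{bornuschochetnonunif2}, which close the iteration. Applying conormal derivatives $Z^\beta$ with $|\beta|\leq m+1$ and commuting with the symmetric hyperbolic operator, the symmetrization by $A^0$ and integration by parts yield
\begin{equation*}
\frac{\dd}{\dd t}\int_{\Fscr(0)}\langle A^0 Z^\beta U,Z^\beta U\rangle\,\dd x+\eps\int_{\partial\Fscr(0)}|Z^\beta U|^2\,\dd\Gamma\leq \text{(commutators)}+\text{(forcing)},
\end{equation*}
where the boundary contribution is coercive thanks to the non-characteristic term $\eps I$; this produces both the interior conormal bound and the boundary trace bound in \rfb{bornuschochetnonunif}. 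The commutators are handled by the standard conormal product/Moser estimates on $\Fscr(0)$, with constants depending on $\|U\|_{X^m_T}$ and $\|\tilde\Theta\|_{C^m[0,T]}$. Normal derivatives are then recovered by inverting $\Ascr_n$ near $\partial\Fscr(0)$ in the equation: $\partial_\nu U$ is expressed as a smooth combination of $\partial_t U$, tangential derivatives and lower order terms, with a constant $C(\eps)$, and iterating this relation gives \rfb{bornuschochetnonunif2}. Combining the two produces \rfb{latroisieme} on a possibly shorter time interval.

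Finally, a contraction in the weaker $X^0$ norm (or $L^\infty_TL^2$ with $L^2_T L^2$ on the boundary) on a closed ball of $X^{m+1}_{T_0}$ gives a unique fixed point, and a continuity argument shows that for $T_0$ small enough $\pi U_\Theta^\eps(t,x)\in \Vscr$ (since $\pi U_0^n\in \Cscr\Subset\Vscr$). The main technical obstacle is thus the interplay between the inhomogeneous $\tilde\Theta$-dependent boundary condition and the compatibility conditions at all orders $k\leq m$: the lift $U^\flat$ and the iterates must be constructed so that the boundary traces of the time derivatives agree with $\mathcal{I}^{k,\eps}$ at $t=0$, otherwise the $X^{m+1}$ energy estimate cannot be closed. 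Once this bookkeeping is carried out, the $\eps$-dependent constants in \rfb{bornuschochetnonunif}--\rfb{latroisieme} follow directly from the Moser-type estimates and the $\eps^{-1}$ factor arising from $\Ascr_n^{-1}$.
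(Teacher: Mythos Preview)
Your overall architecture is right: the $\eps$-perturbation makes the boundary non-characteristic, conormal energy estimates give \eqref{bornuschochetnonunif}, and inversion of $\Ascr_n$ recovers normal derivatives for \eqref{bornuschochetnonunif2}. But there is a genuine regularity gap in your construction step.

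When you homogenize the boundary by subtracting a lift $U^\flat$ built from $\tilde\Theta\in C^{m+1}$, the shifted source term acquires a contribution $A^0[\tilde\Theta,V]\,\partial_t U^\flat$. Since $U^\flat$ is linear in $\tilde\Theta$, one has $\partial_t U^\flat\in C^m_t C^\infty_x$ only, so this source lies in $H^{m}((0,T)\times\Fscr(0))$ but not in $H^{m+1}$. The Rauch--Massey / Schochet theory for the linear or quasilinear problem then delivers at most $U\in X^m_T$, not $X^{m+1}_T$, and the iteration does not close at the level of regularity claimed. Simply saying that the compatibility conditions match the traces $\partial_t^k(\overline u_\Sscr[\tilde\Theta]\cdot n_0)|_{t=0}$ does not cure this: compatibility at $t=0$ is necessary but cannot manufacture the missing time derivative of the source for $t>0$.

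The paper handles this by first replacing $\tilde\Theta$ by a regularized $\hat\Theta\in C^{m+2}$ (mollification plus a polynomial correction so the compatibility conditions are preserved), so that the lift $U^c=[0,\overline u_\Sscr[\hat\Theta],0]^\intercal$ gives $F^c\in H^{m+1}$ and Schochet's quasilinear result yields a solution in $X^{m+1}_T$ directly. The a priori estimates \eqref{bornuschochetnonunif}--\eqref{bornuschochetnonunif2} are then derived on the unshifted system, and the crucial point is that the constants depend only on $\|\hat\Theta\|_{C^{m+1}}\leq 2\|\tilde\Theta\|_{C^{m+1}}$, not on the $C^{m+2}$ norm. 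This uniformity allows one to pass to the limit $\hat\Theta\to\tilde\Theta$ in $C^{m+1}$ and recover the solution for the original $\tilde\Theta$. Your Picard scheme could be repaired the same way, but the regularization step is not optional.
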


\begin{proof}

We split the proof in several steps.
 
{\bf Step 1}. {\em We solve the system \rfb{modelnonlinear} with a given smoother solid motion}.
We replace $\tilde \Theta$ by $\hat{\Theta}$   in the boundary data $\overline u_\Sscr[\tilde\Theta]$ in \rfb{modelnonlinear}, 
where 
 $\hat{\Theta} \in C^{m+2}(\mathbb{R})$,  the compatibility conditions are still satisfied and
 $$ \|\hat{\Theta}\|_{C^{m+1}([0, T])} \leq 2  \|\tilde{\Theta}\|_{C^{m+1}([0, T])}.$$
 Indeed,  we can always find a family $\tilde{\Theta}_{\mu}  \in C^{m+2}(\mathbb{R})$  such that   $\tilde{\Theta}_{\mu}\to \tilde{\Theta}$ in $C^{m+1}[0, T]$ when $\mu$ tends to zero
 such that the compatibility conditions are still satisfied 
 that we  can use  in the final step to pass to the limit and recover the result for $\tilde \Theta$.
  To construct such a family, starting from  $\tilde\Theta\in C^{m+1}[0, T]$,  we first extend it to a compactly supported function on  $\mathbb{R}$ in  
 the following way. Let $\tilde{\chi}$ a smooth compactly supported function which is $1$ on $[-T_{L}, T_{L}]$
  for a large fixed $T_{L}>0$, we set 
 \begin{equation*}
  \Theta_{ext}(t) =: \tilde\chi(t) \cdot\left\{
 \begin{aligned}
 & \tilde{\Theta} (t) &\qquad \forall \m t\in[0, T],\\
 &  P_{0}(t)& \qquad \forall \m t \leq 0, \\
 & P_{T}(t)& \qquad \forall t \geq T, 
 \end{aligned}\right.
 \end{equation*}
 where 
 and $P_{0}$  and $ P_{T}$ are  the Taylor polynomials  given by
 $$P_{0}(t)=\sum_{k=1}^{m+1}\frac{\partial_t^k\tilde\Theta(0)}{k!}t^k, \quad
 P_{T}(t)=\sum_{k=1}^{m+1}\frac{\partial_t^k\tilde\Theta(T)}{k!}(t-T)^k. $$
 From this definition, we have that  $\Theta_{ext}$  satisfies $\Theta_{ext}\in C^{m+1}(\rline)$, $\Theta_{ext}=\tilde\Theta$ on $[0, T]$ and in particular it keeps the compatibility condition since we have  $\partial_t^k\check\Theta(0)=\partial_t^k\tilde \Theta(0)$ for $0\leq k\leq m+1$. To improve the regularity of $\Theta_{ext}$, we use the standard mollifier $\eta_\mu\in C_c^\infty(\rline)$ and introduce
$$\hat\Theta_\mu(t):=(\Theta_{ext}*\eta_\mu)(t)=\int_\rline\Theta_{ext}(s)\eta_\mu(t-s)\dd s. $$
At this stage, we obtain $\hat\Theta_\mu\in C^\infty(\rline)$ and $\hat\Theta_\mu$ restricted to  $[0, T]$ converges to $ \tilde\Theta$ in $C^{m+1}[0, T]$ as $\mu\to 0$. Due to the mollification, now the compatibility conditions are no longer verified. To compensate this, it suffices to change the behaviour of $\hat\Theta_\mu$ at  $0$ by adding a correction term.  We thereby define $\tilde \Theta_{\mu}$ as 
\begin{equation*}\label{hattheta}
\tilde\Theta_{\mu}(t):=\hat\Theta_\mu(t)+\tilde\chi(t)Q_\mu(t),
\end{equation*}
where  the correction polynomial $Q_\mu$ is :
$$ Q_\mu(t):=\sum_{k=0}^{m+1}\frac{(\partial_t^k\tilde\Theta(0)-\partial_t^k\hat\Theta_\mu(0))}{k!}t^k.$$
Note that $Q_\mu\to 0$ uniformly on $[0, T]$. The restriction of  $\tilde{\Theta}_{\mu}$ to $[0, T]$  thus gives  the desired function.

 Now
 we solve \eqref{modelnonlinear} with  $\tilde \Theta$ replaced by $\hat \Theta$ in the boundary condition.
 We introduce $\hat{U}= U- U^c$ with $U^c:=\begin{bmatrix}0, \overline u_{\mathcal{S}}[\hat{\Theta}], 0\end{bmatrix}^\intercal$ and we thereby obtain the system described by $\hat{U}$ as follows:
 \begin{equation}\label{shift}
 \left\{\begin{aligned}
	&A^0[ \tilde{\Theta}, U^c + \hat U] \partial_{t} \hat U+\sum_{j=1}^3 \Ascr^j[ \tilde{\Theta},U^c + \hat U ] \partial_j  \hat U +\Bscr[ \tilde{\Theta},
	U^c,  \hat U] \hat  U= F^c(t,x), \\
	& U=U_0^n (x),\\
	&G(x) U= 0 \qquad \forall\m x\in \partial\Fscr(0),
\end{aligned}\right.
\end{equation} 
where 
$$F^c  =-A^0[\tilde\Theta, U^c]\partial_t U^c- \sum_{j=1}^d \mathcal{A}^j[\tilde{\Theta}, U^c] \partial_{j}U^c - B[\tilde{\Theta}, U^c] U^c + F,  $$
\begin{align*} 
	\Bscr[ \tilde{\Theta},
	U^c,  \hat U] \hat  U  &= B[\tilde\Theta, U^c+\hat U]\hat U+\left(\int_0^1D_U A^0[\tilde\Theta, U^c+s\hat U]\cdot \hat U\dd s\right)\partial_t U^c\\
	&\quad +\sum_{j=1}^d \left( \int_{0}^1 D_{U}\mathcal{A}^j[\tilde\Theta, U^c + s \hat U] \cdot \hat U\, \dd s\right)\partial_{j} U^c+ \left( \int_{0}^1 D_{U}B[\tilde\Theta, U^c + s \hat U] \cdot \hat U\, \dd s\right)U^c.
\end{align*}
In particular, we observe that $F^c \in H^{m+1}([0, T_{0}]\times \mathcal{F}(0)) $ since $\hat\Theta\in C^{m+2}$. The  loss of one derivative is  due to the term in $F^c$ involving
$ \partial_{t} U^c$.

Now that we have homogeneous  boundary conditions, we obtain by using \cite[Theorem A2]{schochet1986compressible} that there exists $T>0$ such that the system \rfb{shift} admits a unique solution $\hat U\in X_{T}^{m+1}$. Moreover, from \cite[Theorem A6]{schochet1986compressible}, we know that $\hat U$ can be continued in $X^{m+1}_T$ as long as $ \hat U + U^c \in \mathcal{U}$ and its $X^m_{T}$-norm remains finite.
 By setting back $U= U^c + \hat U$, this yields a solution $U  \in X_{T}^{m+1}$
 for \eqref{modelnonlinear}
 with $\tilde\Theta$ replaced by $\hat \Theta$ in the boundary condition and in the coefficients of \rfb{modelnonlinear} and $U$ can be continued as long as $U \in \mathcal{U}$ and $U$ does not blow up in $X^m_{T}$.

{\bf Step 2}. {\em We prove that the solution obtained in Step 1, denoted by $U_\Theta^\eps$, enjoys the estimates \eqref{bornuschochetnonunif} and \eqref{bornuschochetnonunif2}}.  
The following energy estimates can be justified in a classical way by  taking  a smoother approximation of  
 $\tilde \Theta$  and  by approximating the initial   data and the source term  by smoother ones  in the same way as in the beginning of section \ref{approx}
  in order to maintain the compatibility conditions.
 We then get that the  solution is actually  in $X^{m+2}_{T}$ and  can pass to the limit in the final estimates.  Note that,  indeed,  by the continuation criterion mentioned in  Step 1, the solution in $X^{m+2}_{T}$ can be continued as long
 as the $X^m_{T}$ norm is bounded so that  it indeed exists on the same interval of time.


We use the conormal vector fields introduced in Subsection \ref{notation}. Applying $Z^\beta$ with $\beta=(\beta_0, \beta_1, \beta_2, \beta_3)\in \nline^4$ and $|\beta|\leq m+1$ to the system \rfb{modelnonlinear}, we have 
\begin{align}
&A^0[\tilde \Theta, U_\Theta^\eps]\partial_t(Z^\beta U_\Theta^\eps)+\sum_{j=1}^3\Ascr^j[\tilde \Theta, U_{\Theta}^\eps]\partial_j(Z^\beta U_\Theta^\eps)\nonumber\\
&=\left[A^0[\tilde \Theta, U_\Theta^\eps]\partial_t,\m Z^\beta\right]U_\Theta^\eps+\left[\Ascr[\tilde \Theta, U_\Theta^\eps] \cdot \nabla,\m Z^\beta\right]U_\Theta^\eps +Z^\beta\left(-B([\tilde \Theta, U_\Theta^\eps]U_\Theta^\eps+F( t,x)\right),\label{2need}
\end{align}
where we use the bracket notation for the commutator between operators:  $[a, b]=ab-ba$. Take the inner product of \rfb{2need} and $Z^\beta U_\Theta^\eps$ in $\Fscr(0)$. 
By integration by parts and  by using the symmetry of the matrices, we obtain
\begin{align*}
& \int_{\Fscr(0)}A^0[\tilde \Theta, U_\Theta^\eps]\partial_t(Z^\beta U_\Theta^\eps)\cdot Z^\beta U_\Theta^\eps\dd x\\
 &=\frac{1}{2}\frac{\dd}{\dd t}\int_{\Fscr(0)}A^0[\tilde \Theta, U_\Theta^\eps]Z^\beta U_\Theta^\eps\cdot Z^\beta U_\Theta^\eps\dd x-\frac{1}{2}\int_{\Fscr(0)}\partial_tA^0[\tilde \Theta, U_\Theta^\eps]Z^\beta U_\Theta^\eps\cdot Z^\beta U_\Theta^\eps\dd x.
 \end{align*}
\begin{align*}
&\int_{\Fscr(0)}\sum_{j=1}^3\Ascr^j[\tilde \Theta, U_\Theta^\eps]\partial_j(Z^\beta U_\Theta^\eps)\cdot Z^\beta U_\Theta^\eps\dd x\\
&=\frac{1}{2}\int_{\partial\Fscr(0)}\Ascr_n[\tilde \Theta, U_\Theta^\eps]Z^\beta U_\Theta^\eps\cdot Z^\beta U_\Theta^\eps\dd x-\frac{1}{2}\int_{\Fscr(0)}\div \Ascr[\tilde \Theta, U_\Theta^\eps]Z^\beta U_\Theta^\eps\cdot Z^\beta U_\Theta^\eps\dd x.
\end{align*}
Similarly, for the boundary integral we have 
\begin{equation}\label{Zestimate}
\frac{1}{2}\int_{\partial\Fscr(0))}\Ascr_n[\tilde \Theta, U_\Theta^\eps]Z^\beta U_\Theta^\eps\cdot Z^\beta U_\Theta^\eps\dd x=\frac{\eps}{2}\int_{\partial\Fscr(0)}|Z^\beta U_\Theta^\eps|^2\dd \Gamma+\int_{\partial\Fscr(0)}Z^\beta \overline p_\Theta^\eps (Z^\beta \overline u_\Theta^\eps)\cdot n_0\dd \Gamma.
\end{equation}
We continue the estimate of the second boundary integral on the right side of \rfb{Zestimate}:
\begin{align*}
\int_{\partial\Fscr(0)}Z^\beta \overline p_\Theta^\eps (Z^\beta \overline u_\Theta^\eps)\cdot n_0\dd \Gamma&\lesssim \frac{\eps}{4}\int_{\partial\Fscr(0)}|Z^\beta\overline p_\Theta^\eps|^2\dd\Gamma+c(\eps)\int_{\partial\Fscr(0)}|(Z^\beta\overline u_\Theta^\eps)\cdot n_0|^2\dd\Gamma.
\end{align*}
Observe that
$$(Z^\beta\overline u_\Theta^\eps)\cdot n_0=Z^\beta(\overline u_\Theta^\eps\cdot n_0)-\sum_{|\gamma|\neq 0, |\sigma+\gamma|\leq m+1 } Z^\sigma\overline u_\Theta^\eps\cdot Z^\gamma n_0, $$
which together with the boundary condition in \rfb{modelnonlinear} gives further the following estimate:
\begin{align*}
\int_{\partial\Fscr(0)}|(Z^\beta\overline u_\Theta^\eps)\cdot n_0|^2\dd\Gamma
&\lesssim \int_{\partial\Fscr(0)}|Z^\beta(\overline u_\Theta^\eps\cdot n_0)|^2\dd\Gamma+\int_{\partial\Fscr(0)}|\sum_{|\gamma|\neq 0, |\sigma+\gamma|\leq m+1} Z^\sigma\overline u_\Theta^\eps\cdot Z^\gamma n_0|^2\dd\Gamma\\
&\lesssim \int_{\partial\Sscr(0)}|Z^\beta((L+W\times x)\cdot n_0)|^2\dd\Gamma +\sum_{|\gamma|\neq 0, |\sigma+\gamma|\leq m+1} \int_{\partial\Fscr(0)}| Z^\sigma\overline u_\Theta^\eps\cdot Z^\gamma n_0|^2\dd\Gamma\\
&\lesssim \int_{\partial\Sscr(0)}|\partial_t^{\beta_0}L\cdot Z^{|\beta|-\beta_0}n_0 +\partial_t^{\beta_0}W\times Z^{|\beta|-\beta_0}(x\times n_0)|^2\dd\Gamma\\
&\qquad \qquad +\sum_{|\gamma|\neq 0, |\sigma+\gamma|\leq m+1}\|Z^\sigma\overline u_\Theta^\eps\|_{L^2(\partial\Fscr(0))}^2\|Z^\gamma n_0\|_{L^\infty(\partial\Fscr(0))}^2\\
&\lesssim \|\tilde\Theta\|_{C^{m+1}[0, t]}^2+\|U_\Theta^\eps\|_{H^{m}_{co}(\partial\Fscr(0))}^2\lesssim \|\tilde\Theta\|_{C^{m+1}[0, t]}^2+\|U_\Theta^\eps\|_{H^{m+1}_{co}(\Fscr(0))}^2.
\end{align*}

Note that we further have the estimate:
\begin{align*}
&\int_{\Fscr(0)}\partial_tA^0[\tilde \Theta, U_\Theta^\eps]Z^\beta U_\Theta^\eps\cdot Z^\beta U_\Theta^\eps\dd x+\int_{\Fscr(0)}\div \Ascr[\tilde \Theta, U_\Theta^\eps]Z^\beta U_\Theta^\eps\cdot Z^\beta U_\Theta^\eps\dd x\\
&\lesssim \left(\|\partial_tA^0[\tilde \Theta, U_\Theta^\eps]\|_{L^\infty(\Fscr(0))}+\|\div \Ascr[\tilde \Theta, U_\Theta^\eps]\|_{L^\infty(\Fscr(0))}\right)\|Z^\beta U_\Theta^\eps\|_{L^2(\Fscr(0))}^2\\
&\lesssim C_\eps\left[\|U_\Theta^\eps\|_{X^m_T}, \|\tilde\Theta\|_{C^m[0,T]}\right]\|Z^\beta U_\Theta^\eps\|_{L^2(\Fscr(0))}^2,
\end{align*}
where we used the estimate \rfb{smoothmatrix} in Proposition \ref{productestimate} for $A^0$ and $\Ascr$.

Combining the above estimates, for every $t\in [0, T]$, we arrive at
\begin{align}
&\frac{1}{2}\frac{\dd}{\dd t}\int_{\Fscr(0)}A^0[\tilde \Theta, U_\Theta^\eps]Z^\beta U_\Theta^\eps\cdot Z^\beta U_\Theta^\eps\dd x+\frac{\eps}{2}\int_{\partial\Fscr(0)}|Z^\beta U_\Theta^\eps|^2\dd \Gamma\nonumber\\
&\lesssim \int_{\Fscr(0)}[A^0[\tilde \Theta, U_\Theta^\eps]\partial_t,\m Z^\beta]U_\Theta^\eps\cdot Z^\beta U_\Theta^\eps\dd x+\int_{\Fscr(0)}[\Ascr[\tilde \Theta, U_\Theta^\eps]\cdot \nabla,\m Z^\beta]U_\Theta^\eps\cdot Z^\beta U_\Theta^\eps\dd x\nonumber\\
&\qquad +\int_{\Fscr(0)}Z^\beta\left(-B[\tilde \Theta, U_\Theta^\eps]U_\Theta^\eps+F \right)\cdot Z^\beta U_\Theta^\eps \dd x +\frac{\eps}{4}\int_{\partial\Fscr(0)}|Z^\beta\overline p_\Theta^\eps|^2\dd\Gamma\label{secondstage}\\
&\qquad +C_\eps\left[\|U_\Theta^\eps\|_{X^m_T}, \|\tilde\Theta\|_{C^m[0,T]}\right]\|Z^\beta U_\Theta^\eps\|_{L^2(\Fscr(0))}^2+\|\tilde\Theta\|_{C^{m+1}[0, t]}^2+\|U_\Theta^\eps\|_{H^{m+1}_{co}(\Fscr(0))}^2.\nonumber
\end{align}
Now we focus on estimating the integrals with the commutators in \rfb{secondstage}. Taking integration from $0$ to $T$ and using the estimates \rfb{comm},  we have
\begin{equation*}\label{commum1}
\begin{aligned}
&\int_0^T\int_{\Fscr(0)}[A^0[\tilde \Theta, U_\Theta^\eps]\partial_t,\m Z^\beta]U_\Theta^\eps\cdot Z^\beta U_\Theta^\eps\dd x\dd t\\
&\leq \int_0^T\|[A^0[\tilde \Theta, U_\Theta^\eps]\partial_t,\m Z^\beta]U_\Theta^\eps\|_{L^2(\Fscr(0))}\|Z^\beta U_\Theta^\eps\|_{L^2(U_\Theta^\eps)}\dd t\\
&\leq C_\eps\left[\|U_\Theta^\eps\|_{X_T^m}, \|\tilde\Theta\|_{C^m[0, T]}\right]\int_0^T(1+\|U_\Theta^\eps \|_{X^{m}_t}+\|\tilde\Theta\|_{C^{m}[0,t]})\|U_\Theta^\eps\|_{X^{m+1}_t}^2\dd t\\
&\quad +C_\eps\left[\|U_\Theta^\eps\|_{X^m_T}, \|\tilde \Theta\|_{C^m[0, T]}\right]\int_0^T(1+\|\tilde \Theta\|_{C^{m+1}[0, t]}+\|U_\Theta^\eps\|_{X^{m+1}_t})\|U_\Theta^\eps\|_{X^m_t}\|U_\Theta^\eps\|_{X^{m+1}_t}\dd t,
\end{aligned}
\end{equation*}
as well as
\begin{equation*}\label{commum2}
\begin{aligned}
&\int_0^T\int_{\Fscr(0)}[\Ascr[\tilde \Theta, U_\Theta^\eps]\cdot \nabla,\m Z^\beta]U_\Theta^\eps\cdot Z^\beta U_\Theta^\eps\dd x\dd t\\
&\leq\int_0^T \|[\Ascr[\tilde \Theta, U_\Theta^\eps]\cdot \nabla,\m Z^\beta]U_\Theta^\eps\|_{L^2(\Fscr(0))}\|Z^\beta U_\Theta^\eps\|_{L^2(\Fscr(0))}\dd t\\
&\leq C_\eps\left[\|U_\Theta^\eps\|_{X_T^m}, \|\tilde\Theta\|_{C^m[0, T]}\right]\int_0^T(1+\|U_\Theta^\eps \|_{X^{m}_t}+\|\tilde\Theta\|_{C^{m}[0,t]})\|U_\Theta^\eps\|_{X^{m+1}_t}^2\dd t\\
&\quad + C_\eps\left[\|U_\Theta^\eps\|_{X_T^m}, \|\tilde\Theta\|_{C^m[0, T]}\right]\int_0^T(1+\|\tilde \Theta\|_{C^{m+1}[0, t]}+\|U_\Theta^\eps\|_{X^{m+1}_t})\|U_\Theta^\eps\|_{X^m_t}\|U_\Theta^\eps\|_{X^{m+1}_t}\dd t.
\end{aligned}
\end{equation*} 

Using \rfb{proXm} and \rfb{smoothmatrix} we continue dealing with the integrals depending on $B$ and $F$:
\begin{align*}
	&\int_0^T\int_{\Fscr(0)}Z^\beta\left(-B[\tilde \Theta, U_\Theta^\eps]U_\Theta^\eps+F \right)\cdot Z^\beta U_\Theta^\eps \dd x\dd t\\
	&\lesssim \int_0^T\left(\|B[\tilde\Theta, U_\Theta^\eps]\|_{X^{m}_t}\|U_\Theta^\eps\|_{X^{m+1}_t}+\|B[\tilde\Theta, U_\Theta^\eps]\|_{X^{m+1}_t}\|U_\Theta^\eps\|_{X^{m}_t}+\|Z^\beta F\|_{L^2(\Fscr(0))}\right)\|Z^\beta U_\Theta^\eps\|_{L^2(\Fscr(0))}\dd t\\
	&\leq C_\eps\left[\|U_\Theta^\eps\|_{X^m_T}, \|\tilde\Theta\|_{C^{m}[0, T]}\right] \int_0^T(1+\|\tilde\Theta\|_{C^{m+1}[0, t]}+\| U_\Theta^\eps\|_{X^{m+1}_t})\|U_\Theta^\eps\|_{X^{m+1}_t}\dd t\\
	&\quad \quad +\|F\|_{H^{m+1}((0, T)\times \Fscr(0))}^2+\int_0^T\|U_\Theta^\eps\|_{X^{m+1}_t}^2\dd t.
\end{align*}

Summing over $0\leq |\beta|\leq m+1$, by the ellipticity of $A^0[\tilde \Theta, U_\Theta^\eps]$ in $\mathcal{V}$, we conclude from \rfb{secondstage} that
\begin{equation*}\label{firstp}
\begin{aligned}
&\sup_{t\in [0, T]}\|U_\Theta^\eps(t)\|_{H^{m+1}_{co}(\Fscr(0))}^2+\int_0^T\|U_\Theta^\eps\|_{H^{m+1}_{co}(\partial\Fscr(0))}^2\dd t\\
&\leq C_\eps\left[\|U_\Theta^\eps\|_{X^m_T}, \|\tilde\Theta\|_{C^m[0, T]}\right]\left(\|U_0^n\|_{H^{m+1}(\Fscr(0))}^2+\|F\|_{H^{m+1}((0, T)\times \Fscr(0))}^2\right.\\
&\quad \quad \quad \quad \left.+ \int_0^T(1+\|\tilde\Theta\|_{C^{m+1}[0, t]}^2+\| U_\Theta^\eps\|_{X^{m+1}_t}^2)\dd t\right).
\end{aligned}
\end{equation*}
According to the definition of the conormal vector fields, we see that  the above boundary term contains all the tangential and time
derivatives. Therefore we actually have
$$\|U_\Theta^\eps\|_{L^2(0, T; H^{m+1}_{co}(\partial\Fscr(0)))}=\|U_\Theta^\eps\|_{H^{m+1}((0, T)\times \partial\Fscr(0))}.$$
Hence we obtain the estimate \rfb{bornuschochetnonunif}. 

We shall now prove the estimate \eqref{bornuschochetnonunif2}. Thanks to the definition
of the conormal space, 
only  the control of the normal derivatives close to the boundary $\partial\Fscr(0)$ is missing according to Remark \ref{understand}. Actually,  we just need to control them near the boundary $\partial\Fscr(0)$.
To do this,
we shall work in a small vicinity of the boundary in a local chart $\mathcal{O}_{i}$ where the boundary is given by the equation $x_{3}= \varphi_i(x_{1}, x_{2})$.
 The conormal norm  controls the tangential fields $Z_{k}= \partial_{y_{k}}=  \partial_{x_{k}} + \partial_{x_{k}} \varphi_i\m \partial_z $ for $k=1,2$, while the missing direction to be controlled  is  $\partial_{z}=\partial_{x_{3}}$.
  Since \rfb{modelnonlinear} is with non-characteristic boundary, in a vicinity of the boundary we express it by using the local coordinates $(y_1, y_2, z)$ as
  \begin{equation}\label{normalutheta}
  \partial_{z} U_{\Theta}^\eps= - (A_{z}[\tilde\Theta, U_\Theta^\eps])^{-1} \left(A^0[\tilde\Theta, U_\Theta^\eps] \partial_{t}U_\Theta^\eps + \sum_{j=1}^2 \mathcal{A}^j[\tilde\Theta, U_\Theta^\eps] \partial_{y_j} U_{\Theta}^\eps + B[\tilde\Theta, U_\Theta^\eps]U_{\Theta}^\eps - F \right),
  \end{equation}
  where 
  $ A_{z}= \mathcal{A}^{3} -  \sum_{j=1}^2 \partial_{y_j} \varphi_i\mathcal{A}^{j}$
  is invertible close to the boundary.
 We shall  use the notation \begin{equation}\label{newgradient}
 \nabla_y:=(Z_1, Z_2)=(\partial_{y_1}, \partial_{y_2}).
 \end{equation}
 Note that  by rearranging the derivatives, we can first write
\begin{equation*}
\|U_\Theta^\eps\|_{X^{m+1}_T(\mathcal{O}_i)} \leq \sum_{i=0}^m\|\partial_t^i\partial_z^{m+1-i} U_\Theta^\eps\|_{X^0_{T}(\mathcal{O}_i)}
 + \|\nabla_{y}U_\Theta^\eps\|_{X^m_{T}(\mathcal{O}_{i})}
+ \|U_\Theta^\eps\|_{X^m_{T}} +  \|U_\Theta^\eps\|_{L^\infty_{T} H^{m+1}_{co}}.
\end{equation*}
By using \eqref{calculus1} in  Proposition \ref{calculus} (for $\mu= 1$), we can further simplify into
\begin{equation}\label{notice}
\|U_\Theta^\eps\|_{X^{m+1}_T(\mathcal{O}_i)} \lesssim  \sum_{i=0}^m\|\partial_t^i\partial_z^{m+1-i} U_\Theta^\eps\|_{X^0_{T}(\mathcal{O}_i)}
+ \|U_\Theta^\eps\|_{X^m_{T}} +  \|U_\Theta^\eps\|_{L^\infty_{T} H^{m+1}_{co}}.
\end{equation}
We then apply $\partial_t^i\partial_z^{m-i}$ with $0\leq i\leq m$ to \rfb{normalutheta} and obtain that
\begin{multline*}
\partial_t^i\partial_z^{m+1-i} U_\Theta^\eps=
 -(A_z^{-1}A^0)[\tilde\Theta, U_\Theta^\eps]\partial_t^{i+1}\partial_z^{m-i} U_\Theta^\eps-A_z^{-1}\sum_{j=1}^2\Ascr^j[\tilde\Theta, U_\Theta^\eps]\partial_t^i\partial_z^{m-i}\partial_{y_{j}} U_\Theta^\eps
  \\- \mathcal{C} - \partial_t^i\partial_z^{m-i}(A_z^{-1} ( B[\tilde\Theta, U_\Theta^\eps]U_{\Theta}^\eps - F)),
\end{multline*}
where $\mathcal{C}$ is a commutator.
 Thanks to the estimates in Proposition \ref{productestimate}
 (note that here we apply $m$ derivatives with $m \geq 3$ on the right hand side so that the commutator estimates are indeed valid), 
 we obtain that for $0\leq i\leq m$,
%
%
\begin{multline*}
\|\partial_t^i\partial_z^{m+1-i}U_\Theta^\eps\|_{X^0_{T}(\mathcal{O}_{i})}\leq C_\eps[\|U_\Theta^\eps\|_{X^m_T}, \|\tilde\Theta\|_{C^m[0, T] }]\left(\|\partial_t^{i+1}\partial_z^{m-i} U_\Theta^\eps\|_{X^0_{T}(\mathcal{O}_{i})}+\|\partial_t^i\partial_z^{m-i}\nabla_y U_\Theta^\eps\|_{X^0_{T}(\mathcal{O}_{i})}\right.\\
 \left. +\|F\|_{X^m_T}+ \|U_\Theta^\eps\|_{L^\infty_TH^{m+1}_{co}}+\|U_\Theta^\eps\|_{X^{m}_T}\right).
\end{multline*}
By using  \eqref{calculus1} in  Proposition \ref{calculus} and by choosing $\mu$ sufficiently 
small this yields for $ 0 \leq i  \leq m$, 
\begin{multline*}
\|\partial_t^i\partial_z^{m+1-i}U_\Theta^\eps\|_{X^0_{T}(\mathcal{O}_{i})}\leq C_\eps[\|U_\Theta^\eps\|_{X^m_T}, \|\tilde\Theta\|_{C^m[0, T] }]\left(\|\partial_t^{i+1}\partial_z^{m-i} U_\Theta^\eps\|_{X^0_{T}(\mathcal{O}_{i})}\right.\\
 \left. +\|F\|_{X^m_T}+ \|U_\Theta^\eps\|_{L^\infty_TH^{m+1}_{co}}+\|U_\Theta^\eps\|_{X^{m}_T}\right).
\end{multline*}
We see that there is now  $\|\partial_t^{i+1}\partial_z^{m-i}U_\Theta^\eps\|_{X^0_{T}}$ on the right hand side above. If $i \leq m-1$, we can  thus  iterate
by using the estimate for $\|\partial_t^{i+1}\partial_z^{m+1-(i+1)}U_\Theta^\eps\|_{X^0_{T}}$
and iterate the process to  finally arrive at 
\begin{equation*}
\|\partial_t^i\partial_z^{m+1-i}U_\Theta^\eps\|_{X^0_{T}(\mathcal{O}_{i})}\leq C_\eps[\|U_\Theta^\eps\|_{X^m_T}, \|\tilde\Theta\|_{C^m[0, T] }]\left( \|F\|_{X^m_T}+ \|U_\Theta^\eps\|_{L^\infty_TH^{m+1}_{co}}+\|U_\Theta^\eps\|_{X^{m}_T})\right).
\end{equation*}
Plugging this estimate into \eqref{notice}, we thus get 
\begin{equation*}
\|U_\Theta^\eps\|_{X^{m+1}_T(\mathcal{O}_i)} \leq  C_\eps[\|U_\Theta^\eps\|_{X^m_T}, \|\tilde\Theta\|_{C^m[0, T] }]\left( \|F\|_{X^m_T}+ \|U_\Theta^\eps\|_{L^\infty_TH^{m+1}_{co}}+\|U_\Theta^\eps\|_{X^{m}_T}
\right).
\end{equation*}
Since a neighborhood of the boundary is covered  by the  $\mathcal{O}_i$, we 
finally obtain  \rfb{bornuschochetnonunif2}.

{\bf Step 3}. It remains to prove that we can find $T_{0}$ which depends only
 on $R$ and the data such that the solution can be continued on $[0, T_{0}]$
  and that \eqref{latroisieme} holds.
 
 Let us assume that \eqref{latroisieme} holds for  some $T$  small with $M_{\eps}$
  to be chosen. Then, 
  thanks to  the estimates \eqref{bornuschochetnonunif} and \eqref{bornuschochetnonunif2}, we deduce that
\begin{equation*}
 \|U_\Theta^\eps\|_{X^{m+1}_{T}}  +  \|U_\Theta^\eps\|_{H^{m+1}([0, T] \times \partial \mathcal{F}(0))} 
  \leq  C_{\varepsilon}\left[\| U_{\Theta}^\varepsilon\|_{X^m_{T} }, R\right]
  \Bigl( \|U_0^n\|_{H^{m+1}}+\|F\|_{H^{m+1}((0, T)\times \Fscr(0))} +T^{\frac{1}{2}}(1+R+ M_{\eps})\Bigr),
\end{equation*}
where we used $\|\tilde{\Theta} \|_{C^{m+1}([0, T])} \leq R$.
 Since we deduce from a rough time integration of  \eqref{modelnonlinear} that 
 $$  \| U_{\Theta}^\varepsilon\|_{X^m_{T} } \leq   \| U^n_{0}\|_{H^{m+1}} + T  C_{\varepsilon}\left[\| U_{\Theta}^\varepsilon\|_{X^m_{T} }, R\right]    \| U_{\Theta}^\varepsilon\|_{X^{m+1}_{T} } +  T\|F \|_{X^m_{T}}
  \leq \| U^n_{0}\|_{H^{m+1}} +  T\| F \|_{X^m_{T}} +  T  C_{\varepsilon}\left[M_{\eps}, R\right],  $$
  we have
\begin{equation}
\label{presqueOK}
\begin{aligned}
   \|U_\Theta^\eps\|_{X^{m+1}_{T}}  +  \|U_\Theta^\eps\|_{H^{m+1}([0, T] \times \partial \mathcal{F}(0))} 
&  \leq C_{\eps}\Bigl[ \| U^n_{0}\|_{H^{m+1}}+ T \|F\|_{X^m_{T}} +  T  C_{\varepsilon}\left[M_{\eps}, R\right], R  \Bigr] 
  \Bigl( \|U_0^n\|_{H^{m+1}}  \\
  &\quad \quad \quad +\|F\|_{H^{m+1}((0, T)\times \Fscr(0))} +T^{\frac{1}{2}}(1+R+ M_{\eps})\Bigr).
  \end{aligned}
  \end{equation}
Then for every $R$, we can choose
  $M_\eps$ sufficiently large so that
  $$C_{\eps}\Bigl[ \| U^n_{0}\|_{H^{m+1}}, R\Bigr]( \| U^n_{0}\|_{H^{m+1}}+  \|F\|_{H^{m+1}((0, T_{L})\times \Fscr(0))}) \leq {M_{\eps} \over 2},$$
   and  then  $T_{0}$ sufficiently small so that the right hand side of \eqref{presqueOK} for $T=T_{0}$ 
   is  still smaller than   $M_{\eps}$  while ensuring  that $ \pi U_{\Theta}^\eps \in \mathcal{V}$ on $[0, T_{0}]$.
   Note that by Sobolev embedding, we have that
   $$ \|F\|_{X^m_{T}} \lesssim_{T_{L}, \mathcal{F}(0)} \| F\|_{H^{m+1}((0, T_{L})\times \Fscr(0))}).$$
   Then, by a standard boostrap argument the solution can be continued
   on $[0, T_{0}]$ and \eqref{latroisieme} holds for $T \in [0,  T_{0}].$

To conclude, we can take for $\hat \Theta$ a sequence $\hat{\Theta}^n= \tilde{\Theta}_{\mu_{n}}$
with $\mu_{n}$ tending to zero where  $ \tilde{\Theta}_{\mu}$ was constructed in Step 1. 
By using the above estimates which are independent of $\mu_{n}$, we can use  classical arguments to  pass to the limit and get the solution
$U^\varepsilon_{\Theta}$ as stated in Proposition \ref{schochet}. We do not give full details here since the same arguments
will be used below  to pass to the limit when $\varepsilon$ tends to zero in order to get Theorem \ref{main}.
Note that the uniqueness follows directly from the $L^2$-estimate for the difference of two solutions.
 \end{proof}

\subsection{Proof of Theorem \ref{existeps}}
We are now in a position to give the proof of Theorem \ref{existeps}.
We shall use a fixed-point argument.
To set up the fixed-point procedure, we introduce the following space:
\begin{equation}\label{space}
\Kscr_{T}:=\begin{Bmatrix}\Theta= (L, W)\in C^{m+1}([0, T])\left|\m \partial_t^k \Theta(0)= \mathcal{I}_{\Theta}^{k, \eps}(U_{0}^n, \Theta_{0}, F^k),  k\leq m\right.
\end{Bmatrix}.
 \end{equation}
For $\Theta= (L, W) \in \Kscr_{T}$ we introduce the map $\Lambda(\Theta)$ as below:
\begin{equation}\label{map}
\Lambda(\Theta):=
\left\{\begin{aligned}
& l_0+\int_0^t\left(L\times W +\frac{1}{m}\int_{\partial\Sscr(0)} \overline p_\Theta^\eps n_0\dd \Gamma \right)\dd \sigma,\\
& \omega_0+\int_0^t\left(W\times (J_0^{-1}W)+J_0^{-1}\int_{\partial\Sscr(0)} x\times \overline p_\Theta^\eps \m n_0\dd \Gamma\right)\dd \sigma,
\end{aligned}
\right.
\end{equation}
where $U^\varepsilon_{\Theta}$ is the solution of \eqref{modelnonlinear} for a given $\Theta$ which is provided by Proposition \ref{schochet}.

Our main task in the following is to show that the map $\Lambda$ possesses a fixed point in the space $\Kscr_{T}$.

Let $B_{T, R}^{\Kscr}$ be the  ball defined as
$$B_{T, R}^\Kscr:=\left\{\Theta\in \Kscr_{T} ~\text{with}~\|\Theta\|_{C^{m+1}([0, T])}\leq R \right\}, $$
for some $R$ large enough and $T$ small enough. We shall always consider $T$ smaller than $T_{0}$ given by Proposition \ref{schochet}.

\begin{prop}\label{fixedpoint}
There exists a time $T>0$ and $R>0$, such that the map $\Lambda$ in \rfb{map} is well-defined on $B_{T, R}^\Kscr$, i.e.
\begin{equation*}
\begin{aligned}
\Lambda: B_{T, R}^\Kscr&\longrightarrow B_{T, R}^\Kscr\\
\Theta&\mapsto \Lambda(\Theta).
\end{aligned}
\end{equation*}
Moreover, $\Lambda$ has a fixed point in $ B_{T, R}^\Kscr.$ 
\end{prop}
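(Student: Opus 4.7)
\emph{The plan.} I would apply Schauder's fixed-point theorem to $\Lambda$ on $B_{T,R}^\Kscr$ viewed as a closed convex subset of $C^{m+1}[0,T]$. Three things must be verified: that $\Lambda$ maps $\Kscr_T$ into itself (preservation of compatibility), that $\|\Lambda(\Theta)\|_{C^{m+1}[0,T]}\leq R$ (self-map), and continuity together with relative compactness of the image.

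\emph{Compatibility.} Differentiating the formula \rfb{map} at $t=0$ reproduces exactly the recursion \rfb{compatibegin} with $\overline p$ replaced by $\overline p_\Theta^\eps$. Since $U_\Theta^\eps$ solves \rfb{modelnonlinear} with initial data $U_0^n$, Proposition \ref{schochet} together with the system itself forces $\partial_t^k \overline p_\Theta^\eps(0) = \mathcal{I}^{k,\eps}_p(U_0^n,l_0,\omega_0,F^k)$, and these are precisely the quantities building $\mathcal{I}^{k,\eps}_\Theta$. Hence $\partial_t^k \Lambda(\Theta)(0) = \mathcal{I}^{k,\eps}_\Theta$ for every $k\leq m+1$, so $\Lambda(\Theta)\in\Kscr_T$ and moreover $\Lambda(\Theta)$ automatically satisfies the natural compatibility of order $m+1$ at $t=0$.

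\emph{Self-map.} For $\Theta\in B_{T,R}^\Kscr$, Proposition \ref{schochet} gives $\|U_\Theta^\eps\|_{X^{m+1}_T}+\|U_\Theta^\eps\|_{H^{m+1}((0,T)\times \partial\Fscr(0))}\leq M_\eps[R,\text{data}]$. Setting $f(\Theta):= L\times W + m^{-1}\int_{\partial\Sscr(0)}\overline p_\Theta^\eps\, n_0\,\dd\Gamma$ so that $\partial_t^k\Lambda(\Theta) = \partial_t^{k-1} f(\Theta)$ for $1\leq k\leq m+1$, I would exploit that the derivatives of $\Lambda(\Theta)$ at $t=0$ are fixed by data. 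For $k\leq m$ the bound
\[ \|\partial_t^k\Lambda(\Theta)\|_{L^\infty[0,T]} \leq |\mathcal{I}^{k,\eps}_\Theta| + T\,\|\partial_t^{k+1}\Lambda(\Theta)\|_{L^\infty[0,T]} \]
follows by integration in time. For the top derivative, since $\partial_t^{m+1}f(\Theta)\in L^2[0,T]$ with norm controlled by $CR^2\sqrt T$ (from the product $L\times W$) plus $CM_\eps$ (from the boundary integral, via the $H^{m+1}$-trace estimate), Cauchy--Schwarz in time yields
\[ \|\partial_t^{m+1}\Lambda(\Theta)\|_{L^\infty[0,T]} \leq |\mathcal{I}^{m+1,\eps}_\Theta| + \sqrt T\bigl(CR^2\sqrt T + CM_\eps\bigr). \]
Combining gives $\|\Lambda(\Theta)\|_{C^{m+1}[0,T]} \leq R_0 + C(R,M_\eps)\bigl(R^2 T + M_\eps\sqrt T\bigr)$, where $R_0$ depends only on the data and $\eps$ through the $\mathcal{I}^{k,\eps}_\Theta$. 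Choosing $R := 2R_0$ and then $T$ small enough (depending on $\eps$, $R$, $M_\eps(R)$) closes the self-map.

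\emph{Fixed point.} The image $\Lambda(B_{T,R}^\Kscr)$ is bounded in $C^{m+1}$, hence relatively compact in $C^m[0,T]$ by Arzel\`a--Ascoli. To apply Schauder it remains to show that $\Lambda$ is continuous for the $C^m$ topology. I would derive this from a stability estimate for \rfb{modelnonlinear}: the difference $U_{\Theta_1}^\eps - U_{\Theta_2}^\eps$ satisfies a linear hyperbolic system whose coefficients and source involve $\Theta_j$ and $U_{\Theta_j}^\eps$; the non-characteristic boundary contributed by the $\eps$-perturbation in \rfb{matrice2} allows a standard $L^2$-energy estimate with control of the boundary trace of the pressure, giving a Lipschitz bound $\|\Lambda(\Theta_1)-\Lambda(\Theta_2)\|_{C^0[0,T]} \lesssim_\eps T\,\|\Theta_1-\Theta_2\|_{C^0[0,T]}$, and then continuity in $C^m$ follows by interpolating against the uniform $C^{m+1}$ bound. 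The main obstacle I expect is precisely this stability step: one must carefully estimate the differences of the matrices $A^0,\Ascr^j,B$ and of the boundary data $\overline u_\Sscr[\Theta_j]$ in the $L^2$ energy argument, while keeping the constants uniform over $B_{T,R}^\Kscr$ and using the $\eps$-smoothing to recover the pressure trace needed to control the boundary integrals in $\Lambda$.
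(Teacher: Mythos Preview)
Your approach via Schauder is close in spirit to the paper's, and most of your estimates are correct, but there is a topology mismatch in the way you set up the fixed-point argument. You want to apply Schauder on $B_{T,R}^\Kscr$ viewed as a subset of $C^{m+1}[0,T]$, yet you only establish that the image $\Lambda(B_{T,R}^\Kscr)$ is relatively compact in $C^m[0,T]$ (from the uniform $C^{m+1}$ bound). For Schauder you need compactness and continuity in the \emph{same} Banach space where the set is closed and convex. The ball $B_{T,R}^\Kscr$ is closed in $C^{m+1}$ but not in $C^m$: a $C^m$-limit of functions with uniformly bounded $(m{+}1)$-th derivative is only $C^{m,1}$, not $C^{m+1}$. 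Conversely, boundedness in $C^{m+1}$ alone does not give compactness in $C^{m+1}$.

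The fix is already contained in your own computation. Your bound $\partial_t^{m+1}f(\Theta)\in L^2[0,T]$ (uniformly in $\Theta\in B_{T,R}^\Kscr$) means $\partial_t^{m+1}\Lambda(\Theta)=\partial_t^m f(\Theta)$ lies in $H^1(0,T)\hookrightarrow C^{0,1/2}[0,T]$ with a uniform H\"older seminorm. This is exactly the equicontinuity estimate \eqref{estforascoli} in the paper, and by Arzel\`a--Ascoli it upgrades your compactness to $C^{m+1}$. Once you have this, your $C^m$-continuity of $\Lambda$ (via the $L^2$ stability estimate plus interpolation) combined with the $C^{m+1}$-precompactness of the image yields continuity in $C^{m+1}$ as well, and Schauder applies cleanly.

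The paper takes a slightly different route that sidesteps the closed-set issue altogether: rather than Schauder, it runs the explicit iteration $\Theta^{n+1}=\Lambda(\Theta^n)$, uses the same H\"older/equicontinuity estimate to extract a $C^{m+1}$-convergent subsequence, and then identifies the limit as a fixed point by showing $\Lambda(\Theta^n)\to\Lambda(\Theta)$ merely in $C^0$ (Lemma~\ref{difference}). This avoids having to check that $\Lambda$ is continuous in the strong $C^{m+1}$ topology and does not require $B_{T,R}^\Kscr$ to be closed in any weaker one.
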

\begin{proof}
We first prove that  the map $\Lambda$ sends the  ball $B_R^\Kscr$ into itself  for
$R$ sufficiently large and $T$ sufficiently small.
To do this,  we need to show that  $\Lambda(\Theta)\in C^{m+1}[0, T]$ and that it satisfies the compatibility  conditions in \rfb{space}.
 
 We first note  that $\Lambda(\Theta)$ verifies the compatibility condition in \rfb{space}.
 This follows directly from the definition of $ \mathcal{I}_{\Theta}^{k, \eps}(U_{0}^n, \Theta_{0}, F^k)$.

Now we study the mapping property  of $\Lambda(\Theta)$.
Note that from Proposition \ref{schochet}, for a given $R>0$  there exists $T_{0}$ such that for  every  $\Theta \in \mathcal{B}_{T, R}^{\Kscr}$,
 for $T \leq T_{0}$  there exists a unique
 $U^\varepsilon_{\Theta} \in X^{m+1}_{T}$ solution of \eqref{modelnonlinear}
 and \eqref{latroisieme} implies that
$$\|U_\Theta^\eps\|_{H^{m+1}((0, T)\times \partial\Fscr(0))} \leq M_{\varepsilon}.$$
From the the definition of $\Lambda(\Theta)$, we clearly get that
 $\Lambda(\Theta) \in C^{m+1}([0, T])$ and that 
 $$  \| \Lambda(\Theta)\|_{C^{m+1}[0, T]} \leq C_{0} + C T^{1\over 2}M_{\varepsilon},$$
 where $C_{0}$ only depends on the initial data and $C$ is harmless.
 Indeed, we estimate
\begin{align*}
\|\Lambda(\Theta)\|_{C^{m+1}[0, T]} 
&\lesssim \left|\begin{bmatrix} l_0\\ \omega_0\end{bmatrix}\right|+T^{\frac{1}{2}}\left\|\begin{bmatrix}L \\ W\end{bmatrix} \right\|_{C^{m+1}[0, T]}\|W\|_{C^{m+1}[0, T]}\nonumber\\
&\quad +\|n_0\|_{L^2(\partial_\Sscr(0))}\max_{|\beta|\leq m+1}\int_0^T\bigl(\|\partial_t^\beta\overline p_\Theta^\eps\|_{L^2(\partial\Sscr(0))}+\|\partial_t^{|\beta|-1}\overline p_\Theta^\eps\|_{L^2(\partial\Sscr(0))}\bigr)\dd t \nonumber\\
&\lesssim |\Theta_0|+T^{\frac{1}{2}}\|\Theta\|_{C^{m+1}[0, T]}^2+c(n_0)T^{\frac{1}{2}}\|\overline p_\Theta^\eps\|_{H^{m+1}((0, T)\times \partial\Sscr(0)))}\label{ok}\\
&\lesssim |\Theta_0|+T^{\frac{1}{2}}R^2+c(n_0)T^{\frac{1}{2}}\|\overline p_\Theta^\eps\|_{H^{m+1}((0, T)\times \partial\Sscr(0))}\nonumber\\
&\lesssim  |\Theta_0|+ T^{1 \over 2}R^2 + T^{\frac{1}{2}}M_\eps,\nonumber
\end{align*}
where $M_\eps$ depending  on $R$, $\|U_0^n\|_{H^{m+1}}$ and $\|F\|_{H^{m+1}((0, T)\times \Fscr(0))}$ is introduced in \rfb{latroisieme}. 

Consequently, by choosing $R$ large enough and then $T$ sufficiently small, we obtain that 
$$ \| \Lambda(\Theta)\|_{C^{m+1}[0, T]} \leq R.$$

Note that by using estimates similar to the ones above, we also obtain that
\begin{equation}
\label{estforascoli}
|  \partial_{t}^{m+1}\Lambda(\Theta)(t)- \partial_{t}^{m+1}\Lambda(\Theta) (s) | \leq C _{0} + (t-s)^{ 1 \over 2} C(R) \qquad \forall \m s, \, t, \, 0 \leq s \leq t \leq T.
\end{equation}

We are now in a position to prove the existence of a fixed point.
We pick $\Theta^0 \in  \mathcal{B}_{T, R}^{\Kscr}$ and consider the iteration
\begin{equation}
\label{iteration} \Theta^{n+1} = \Lambda (\Theta^n)  \qquad \forall \m n \geq 0.
\end{equation}
Based on the choice of $T$ and $R$, for every $n$ we have $\Lambda( \Theta^n) \in \mathcal{B}_{T, R}^{\Kscr}$ and 
$$  \| \Theta^n\|_{C^{m+1}[0, T]} \leq  R \qquad \forall \m  n\geq 0.$$
Moreover, from \eqref{estforascoli}, we also get that
$$  | \partial_{t}^{m+1} \Theta^n(t)-\partial_{t}^{m+1}\Theta^n (s) | \leq C _{0} + (t-s)^{ 1 \over 2} M_{\varepsilon}\qquad \forall \m  0 \leq s \leq t \leq T.$$
 According to the the Arzel\`a–Ascoli theorem, this yields that up to a subsequence, $\Theta^n$ converges to $\Theta$ in $C^{m+1}[0, T]$.
 To conclude for a fixed point  from \eqref{iteration}  it suffices to prove that $\Lambda(\Theta^n)$ converges   in any topology weaker than $C^{m+1}[0, T]$.
 
 This will be a direct consequence of the following Lemma:
 \begin{lem}\label{difference}
With the same assumptions as in Proposition \ref{schochet}, and for given $\Theta_{1}$, $\Theta_{2}\in \mathcal{B}_{T, R}^\mathcal{K}$,  then  the corresponding solutions of the system \rfb{modelnonlinear}, denoted by $U_{\Theta_{1}}^\eps$ and $U_{\Theta_{2}}^\eps$, satisfy
\begin{equation*}\label{pro2}
\|U_{\Theta_{1}}^\eps-U_{\Theta_{2}}^\eps\|_{L^2((0, T)\times \partial\Fscr(0))}\leq C_{\varepsilon}[M_{\eps}, R] T^{1 \over 2}\|\Theta_{1}- \Theta_{2}\|_{C^m([0, T])}.
	\end{equation*}
\end{lem}

Before, giving the proof of this Lemma, we shall finish the proof of  Proposition \ref{fixedpoint}.
Using Lemma \ref{difference} and the definition of $\Lambda$, we obtain that
\begin{align*}
&\|\Lambda(\Theta_{1})-\Lambda(\Theta_{2})\|_{C^0([0, T])}\\
&\leq T\|\Theta_1-\Theta_2\|_{C^0[0, T]}(\|\Theta_1\|_{C^0[0, T]}+\|\Theta_2\|_{C^0[0, T]})+T^{\frac{1}{2}}\|\overline p_{L_1}^\eps-\overline p_{L_2}^\eps\|_{L^2((0,T)\times\partial\Sscr(0))}\|n_0\|_{L^2(\partial\Sscr(0))}\\
&\leq 2R T\|\Theta_1-\Theta_2\|_{C^0[0, T}+C_\eps[M_{\eps}, R] T\|\Theta_1-\Theta_2\|_{C^m[0, T]}\\
&\leq C_\eps[M_{\eps}, R] T \|\Theta_1-\Theta_2\|_{C^m[0, T]}.
\end{align*}
This yields that $\Lambda(\Theta^n)$ converges towards $\Lambda(\Theta)$ in the $C^0$-topology
and thus that $\Lambda(\Theta)= \Theta$. This ends the proof of Proposition \ref{fixedpoint}. 
\end{proof}

We present in the following the proof of  Lemma \ref{difference}.
 
\begin{proof}[Proof of Lemma \ref{difference}]
We first derive the equation for the difference $U_{\Theta_1}^\eps-U_{\Theta_2}^\eps$ which corresponds to $\Theta_1$ and $\Theta_2$, respectively. 
Still based on the model \rfb{modelnonlinear}, we make the difference and reformulate it for every $x\in \Fscr(0)$ as:
\begin{equation}\label{diffeq}
\left\{\begin{aligned}
&A^0[\Theta_1, U_{\Theta_1}^\eps]\partial_t (U_{\Theta_1}^\eps-U_{\Theta_2}^\eps)+(\Ascr[\Theta_1, U_{\Theta_1}^\eps]\cdot \nabla) (U_{\Theta_1}^\eps-U_{\Theta_2}^\eps)+B[ \Theta_1, U_{\Theta_1}^\eps](U_{\Theta_1}^\eps-U_{\Theta_2}^\eps)\\
&\qquad\qquad\qquad =-(A^0[ \Theta_1, U_{\Theta_1}^\eps]-A^0[ \Theta_2, U_{\Theta_2}^\eps])\partial_t U_{\Theta_2}^\eps-\left((\Ascr[ \Theta_1, U_{\Theta_1}^\eps]-\Ascr[ \Theta_2, U_{\Theta_2}^\eps])\cdot \nabla\right) U_{\Theta_2}^\eps\\
&\qquad \qquad\qquad\quad  -(B[ \Theta_1, U_{\Theta_1}^\eps]-B[ \Theta_2, U_{\Theta_2}^\eps])U_{L_2}^\eps,\\
& (U_{\Theta_1}^\eps-U_{\Theta_2}^\eps)(0, x)={\bf 0},  \\
& G(x)(U_{\Theta_1}^\eps-U_{\Theta_2}^\eps)=(\overline u_{\Sscr}^\eps[\Theta_1]-\overline u_{\Sscr}^\eps)[\Theta_2]\cdot n_0 \qquad \forall \m  x\in \partial\Fscr(0),
\end{aligned}\right.
\end{equation}
where the matrix $G(x)$ is introduced in \rfb{G}.

Taking the inner product of \rfb{diffeq} and $U_{\Theta_1}^\eps-U_{\Theta_2}^\eps$, we have 
\begin{align*}
&\frac{1}{2}\frac{\dd}{\dd t}\int_{\Fscr(0)}A^0[ \Theta_1, U_{\Theta_1}^\eps](U_{\Theta_1}^\eps-U_{\Theta_2}^\eps)\cdot (U_{\Theta_1}^\eps-U_{\Theta_2}^\eps)\dd x+\frac{\eps}{2}\int_{\partial\Fscr(0)}|U_{\Theta_1}^\eps-U_{\Theta_2}^\eps|^2\dd \Gamma\\
& =\frac{1}{2}\int_{\Fscr(0)}\partial_t A^0[ \Theta_1, U_{\Theta_1}^\eps](U_{\Theta_1}^\eps-U_{\Theta_2}^\eps)\cdot (U_{\Theta_1}^\eps-U_{\Theta_2}^\eps)\dd x
 +\frac{1}{2}\int_{\Fscr(0)}\div \Ascr[\Theta_1, U_{\Theta_1}^\eps](U_{\Theta_1}^\eps-U_{\Theta_2}^\eps)\cdot (U_{\Theta_1}^\eps-U_{\Theta_2}^\eps)\dd x\\
& \quad -\int_{\partial\Sscr(0)}(\overline p_{\Theta_1}^\eps-\overline p_{\Theta_2}^\eps)(\overline u_{\Theta_1}^\eps-\overline u_{\Theta_2}^\eps)\cdot n_0\dd \Gamma
 -\int_{\Fscr(0)}(A^0[ \Theta_1, U_{\Theta_1}^\eps]-A^0[\Theta_2, U_{\Theta_2}^\eps])\partial_t U_{\Theta_2}^\eps\cdot  (U_{\Theta_1}^\eps-U_{\Theta_2}^\eps)\dd x\\
&\quad -\int_{\Fscr(0)}((\Ascr[ \Theta_1, U_{\Theta_1}^\eps]-\Ascr[\Theta_2, U_{\Theta_2}^\eps])\cdot\nabla) U_{\Theta_2}^\eps\cdot  (U_{\Theta_1}^\eps-U_{\Theta_2}^\eps)\dd x\\
&\quad -\int_{\Fscr(0)}B[\Theta_1, U_{\Theta_1}^\eps](U_{\Theta_1}^\eps-U_{\Theta_2}^\eps)\cdot  (U_{\Theta_1}^\eps-U_{\Theta_2}^\eps)\dd x
 -\int_{\Fscr(0)}(B[ \Theta_1, U_{\Theta_1}^\eps]-B[\Theta_2, U_{\Theta_2}^\eps])U_{\Theta_2}^\eps\cdot  (U_{\Theta_1}^\eps-U_{\Theta_2}^\eps)\dd x.
\end{align*}
For the boundary term, we write again:
$$\int_{\partial\Sscr(0)}(\overline p_{\Theta_1}^\eps-\overline p_{\Theta_2}^\eps)(\overline u_{\Theta_1}^\eps-\overline u_{\Theta_2}^\eps)\cdot n_0\dd\Gamma\lesssim \frac{\eps}{4}\int_{\partial\Sscr(0)}|\overline p_{\Theta_1}^\eps-\overline p_{\Theta_2}^\eps|^2\dd\Gamma+c(\eps)\|\Theta_1-\Theta_2\|_{C^m[0, T]}^2,$$
and also by the estimate \rfb{smoothmatrix}
\begin{align*}
&\int_{\Fscr(0)}\partial_t A^0[ \Theta_1, U_{\Theta_1}^\eps](U_{\Theta_1}^\eps-U_{\Theta_2}^\eps)\cdot (U_{\Theta_1}^\eps-U_{\Theta_2}^\eps)\dd x\\
&\quad\quad \quad +\int_{\Fscr(0)}\div \Ascr[ \Theta_1, U_{\Theta_1}^\eps](U_{\Theta_1}^\eps-U_{\Theta_2}^\eps)\cdot (U_{\Theta_1}^\eps-U_{\Theta_2}^\eps)\dd x\\
&\lesssim \left(\|A^0[\Theta_1, U_{\Theta_1}^\eps]\|_{X^m_T}+\|\Ascr[\Theta_1, U_{\Theta_1}^\eps]\|_{X^m_T}\right)\|U_{\Theta_1}^\eps-U_{\Theta_2}^\eps\|_{L^2(\Fscr(0))}^2\\
&\leq C_\eps[ \|U_{\Theta_1}^\eps\|_{X^m_T}, \| \Theta_1\|_{C^m[0, T]}]\|U_{\Theta_1}^\eps-U_{\Theta_2}^\eps\|_{L^2(\Fscr(0))}^2.
\end{align*}
For the integrals related to the difference of $A^0$ and $\Ascr$, using their definition in \rfb{matrices} we have
\begin{align*}
&\int_{\Fscr(0)}(A^0[ \Theta_1, U_{\Theta_1}^\eps]-A^0[ \Theta_2, U_{\Theta_2}^\eps])\partial_t U_{\Theta_2}^\eps\cdot (U_{\Theta_1}^\eps-U_{\Theta_2}^\eps)\dd x \\
&\quad \quad \qquad +\int_{\Fscr(0)}((\Ascr[ \Theta_1, U_{\Theta_1}^\eps]-\Ascr[ \Theta_2, U_{\Theta_2}^\eps])\cdot \nabla) U_{\Theta_2}^\eps (U_{\Theta_1}^\eps-U_{\Theta_2}^\eps)\dd x\\
&\lesssim \|A^0[ \Theta_1, U_{\Theta_1}^\eps]-A^0[\Theta_2, U_{\Theta_2}^\eps]\|_{L^2(\Fscr(0))}\|\partial_t U_{\Theta_2}^\eps\|_{L^\infty(\Fscr(0))}\|U_{\Theta_1}^\eps-U_{\Theta_2}^\eps\|_{L^2(\Fscr(0))}\\
&\quad  +\|\Ascr[\Theta_1, U_{\Theta_1}^\eps]-\Ascr[\Theta_2, U_{\Theta_2}^\eps]\|_{L^2(\Fscr(0))}\|\nabla U_{\Theta_2}^\eps\|_{L^\infty(\Fscr(0))}\|U_{\Theta_1}^\eps-U_{\Theta_2}^\eps\|_{L^2(\Fscr(0))}\\
&\lesssim   C_\eps\left[\|U_{\Theta_1}^\eps\|_{X^m_T}, \|U_{\Theta_2}^\eps\|_{X^m_T}, \|\Theta_1\|_{C^m[0,T]}, \|\Theta_2\|_{C^m[0, T]}\right]\|U_{\Theta_1}^\eps-U_{\Theta_2}^\eps\|_{L^2(\Fscr(0))}^2\\
&\quad   +\|U_{\Theta_2}^\eps\|_{X^m_T}\|\Theta_1-\Theta_2\|_{C^m[0, T]}\|U_{\Theta_1}^\eps-U_{\Theta_2}^\eps\|_{L^2(\Fscr(0))}.
\end{align*}
Similarly, together with the estimate \rfb{smoothmatrix} we treat the integrals with $B$ as below:
\begin{align*}
&\int_{\Fscr(0)}B[ \Theta_1, U_{\Theta_1}^\eps](U_{\Theta_1}^\eps-U_{\Theta_2}^\eps)\cdot (U_{\Theta_1}^\eps-U_{\Theta_2}^\eps)\dd x +\int_{\Fscr(0)}(B[ \Theta_1, U_{\Theta_1}^\eps]-B[ \Theta_2, U_{\Theta_2}^\eps])U_{\Theta_2}^\eps \cdot (U_{\Theta_1}^\eps-U_{\Theta_2}^\eps)\dd x\\
&\lesssim\|B[ \Theta_1, U_{\Theta_1}^\eps]-B[\Theta_2, U_{\Theta_2}^\eps]\|_{L^2(\Fscr(0))}\|U_{\Theta_2}^\eps\|_{L^\infty(\Fscr(0))}\|U_{\Theta_1}^\eps-U_{\Theta_2}^\eps\|_{L^2(\Fscr(0))}\\
&\quad \qquad + \|B[\Theta_1, U_{\Theta_1}^\eps]\|_{L^\infty(\Fscr(0))}\|U_{\Theta_1}^\eps-U_{\Theta_2}^\eps\|_{L^2(\Fscr(0))}^2\\
&\lesssim C_\eps\left[\|U_{\Theta_1}^\eps\|_{X^m_T}, \|\Theta_1\|_{C^m[0, T]}\right]\|U_{\Theta_1}^\eps-U_{\Theta_2}^\eps\|_{L^2(\Fscr(0))}^2
 + \|U_{\Theta_1}^\eps\|_{X^m_T}\|\Theta_1-\Theta_2\|_{C^m[0, T]}\|U_{\Theta_1}^\eps-U_{\Theta_2}^\eps\|_{L^2(\Fscr(0))}.
\end{align*}
Combining with the above estimate and using the ellipticity of $A^0$ we conclude that
\begin{equation}\label{diff}
\begin{aligned}
&\sup_{t\in [0, T]}\|U_{\Theta_1}^\eps-U_{\Theta_2}^\eps\|_{L^2(\Fscr(0))}^2+\eps \int_0^T\|U_{\Theta_1}^\eps-U_{\Theta_2}^\eps\|_{L^2(\partial\Fscr(0))}^2\dd t\\
&\lesssim T \|\Theta_1- \Theta_2\|_{C^m[0, T]}^2 + C_\eps\left[\|U_{\Theta_1}^\eps\|_{X^m_T}, \|U_{\Theta_2}^\eps\|_{X^m_T}, \|\Theta_1\|_{C^m[0,T]}, \|\Theta_2\|_{C^m[0, T]}\right]\int_0^T\|U_{\Theta_1}^\eps-U_{\Theta_2}^\eps\|_{L^2(\Fscr(0))}^2\dd t.
\end{aligned}
\end{equation}
According to Gr\"onwall's lemma, we obtain that
$$\sup_{t\in [0, T]}\|U_{\Theta_1}^\eps-U_{\Theta_2}^\eps\|_{L^2(\Fscr(0))}^2\leq C_\eps[M_{\eps}, R] T\|\Theta_1-\Theta_2\|_{C^m[0, T]}^2,$$
where $M_{\eps}$ is given by \eqref{latroisieme}.
Therefore, we derive from \rfb{diff} directly that
$$\int_0^T\|U_{\Theta_1}^\eps-U_{\Theta_2}^\eps\|_{L^2(\partial\Fscr(0))}^2\dd t\leq C_\eps[M_{\eps}, R] T\|\Theta_1-\Theta_2\|_{C^m[0, T]}^2.$$
 This ends the proof. 
\end{proof}

Now we are ready to give the proof of Theorem \ref{existeps}. 

\begin{proof}[Proof of Theorem \ref{existeps}]
By using the fixed point provided by Proposition \ref{fixedpoint}
and Proposition \ref{schochet}
 we obtain the existence of $T^\varepsilon>0$ and  $U^\eps= U_{\Theta^\eps},$ $ \Theta^\eps$ which solve  \rfb{forcc}--\rfb{solideps} satisfying $$(U^\eps,\Theta^\eps) \in (X^{m+1}([0, T^\varepsilon]; \Fscr(0)))^3\times (C^{m+1}[0, T^\varepsilon])^2.$$
 Finally, let us observe that using that $\Theta^\varepsilon$ is a fixed point and the definition of the map $\Lambda$ in \eqref{map}, we get
 that for every $T \leq T^\varepsilon$
 $$ \| \Theta^\eps \|_{C^{m+1}([0, T])}^2 \leq C_\eps[ C_{0},  \| \Theta^\eps \|_{C^{m}([0, T])}]\left( 1 
  +  T \| U^\varepsilon \|_{H^{m+1}([0, T] \times \mathcal{F}(0))}^2 \right).$$
  By using \eqref{bornuschochetnonunif}, we thereby have
 $$  \| \Theta^\eps \|_{C^{m+1}([0, T])}^2 \leq C_\eps\left[ C_{0},  \| \Theta^\eps \|_{C^{m}([0, T])}, \| U^\varepsilon\|_{X^m_{T} }\right]\biggl( 1  + \int_{0}^T  \| \Theta^\varepsilon\|_{C^{m+1}[0, t]}^2  + \|U^\varepsilon\|_{X^{m+1}_{t}}^2 \, dt \biggr).
  $$ 
  Next, by using  \eqref{bornuschochetnonunif2}, we get from the above estimate and \eqref{bornuschochetnonunif} that
  \begin{multline*}
   \| \Theta^\eps \|_{C^{m+1}([0, T])}^2 + \| U^\varepsilon\|_{L^\infty_{T} H^{m+1}_{co}}^2\\ \leq C_\eps\left[ C_{0},  \| \Theta^\eps \|_{C^{m}([0, T])}, \| U^\varepsilon\|_{X^m_{T} }\right]\biggl( 1  + \int_{0}^T  \| \Theta^\varepsilon\|_{C^{m+1}[0, t]}^2  + \|U^\varepsilon\|_{L^\infty_{t} H^{m+1}_{co}}^2 \, dt \biggr).
  \end{multline*}
  According to the Gr\"onwall inequality, this yields
  $$  \| \Theta^\eps \|_{C^{m+1}([0, T])} + \| U^\varepsilon\|_{L^\infty_{T} H^{m+1}_{co}}
   \leq C_\eps\left[ C_{0}, \| U^\varepsilon\|_{X^m_{T} }, \| \Theta^\eps \|_{C^{m}([0, T])}\right]$$
   and the estimate \eqref{continuationcriterion} follows by using again \eqref{bornuschochetnonunif}. 
\end{proof}



\section{Uniform estimates}\label{uniform}

In this section, we begin the proof of Theorem \ref{main}.
We shall  derive for the solution given by Theorem \ref{existeps}
 estimates which are uniform  with respect to the  parameter $\varepsilon$ in \rfb{forcc}--\rfb{solideps}. 
 This will   allow us  to prove that the solution exists on an interval of time which is independent of $\varepsilon$
 and then thanks to the uniform estimates to pass to the limit in order to get Theorem \ref{main}.

To derive the uniform estimate, we take into account the solid equations to be able to obtain the same regularity control. 
We thus 
 define the following high-order energy functional  for the fluid-solid coupled system \rfb{forcc}--\rfb{solideps}, for $m\geq 3$,
\begin{equation*}\label{tangen}
\begin{aligned}
E_{m, tan}[\Theta^\eps,U^\eps](t,x):&=\sum_{|\beta|\leq m}\int_{\Fscr(0)}\left((\alpha^\eps)^{-1}|Z^\beta\overline p^\eps|^2+\eta^\eps(M Z^\beta \overline u^\eps)\cdot Z^\beta\overline u^\eps+|Z^\beta \overline s^\eps|^2\right)\dd x\\
&\quad +m|\partial_t^{\beta_0}\overline l^\eps|^2+J_0(\partial_t^{\beta_0}\overline \omega^\eps)\cdot (\partial_t^{\beta_0}\overline\omega^\eps),
\end{aligned}
\end{equation*}
which we call the ``conormal energy" in the proof.

Note that the existence time interval in Theorem \ref{existeps} depends on the parameter $\eps$. We denote a  maximal existence time by $T^\eps$ for the solution $(U^\eps, \Theta^\eps)$ defined by 
\begin{equation}\label{superT}
T^\eps=\sup \left\{T>0\m\left| \m  U^\eps \in \overline{\mathcal{V}} \text{ and } \|U^\eps\|_{X_T^m}+\|\Theta^\eps\|_{C^m[0, T]}\leq \Xi \right.\right\},
\end{equation} 
where  $\Xi >0$ will be chosen independent of $\eps$ later.  The goal is to get by the choice
 of  $\Xi>0$ a time $T^*>0$ which is independent of $\eps$ and such that  $T^*\leq T^\eps$.
 
  In this section, the estimates do not depend on $\eps \in (0, 1]$. Nevertheless, they depend
  on $\overline{\mathcal{V}}$.

\subsection{Conormal estimate of $U^\eps$.}

We shall first prove the estimate:
\begin{prop}\label{cornormaluni}
For $m\geq 3$, we have the following  conormal energy control for $T \in [0, T^\eps]$:
$$\sup_{t\in [0, T]}E_{m, tan}[\Theta^\eps,U^\eps](t)+\eps\int_0^T\|U^\eps\|_{H^m_{co}(\partial\Fscr(0))}^2\dd t\leq E_{m, tan}[\Theta^\eps,U^\eps](0)+T C[\Xi] + T \|F\|_{X^m_{T}}^2.$$ 
\end{prop}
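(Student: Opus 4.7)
The plan is to apply the conormal derivative $Z^\beta$ with $|\beta|\leq m$ to the fluid system \rfb{forcc}, take the $L^2(\Fscr(0))$ pairing with $Z^\beta U^\eps$, and integrate by parts. Using the symmetry of $A^0$ and $\Ascr^j$ this produces, for each $\beta$,
\begin{equation*}
\frac{1}{2}\frac{\dd}{\dd t}\!\int_{\Fscr(0)}\!\!A^0 Z^\beta U^\eps\cdot Z^\beta U^\eps\,\dd x + \frac{\eps}{2}\!\int_{\partial\Fscr(0)}\!\!|Z^\beta U^\eps|^2\,\dd\Gamma + \underbrace{\int_{\partial\Fscr(0)} Z^\beta\overline p^\eps\,(Z^\beta\overline u^\eps)\cdot n_0\,\dd\Gamma}_{\mathcal{B}_\beta} = R_\beta,
\end{equation*}
where $R_\beta$ collects the commutators $[A^0\partial_t,Z^\beta]U^\eps$ and $[\Ascr^j\partial_j,Z^\beta]U^\eps$, the source contribution $(Z^\beta F,Z^\beta U^\eps)$, and the $B$-type zero-order terms. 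The commutator/source terms are controlled by Proposition \ref{productestimate} combined with the hypothesis $\|U^\eps\|_{X^m_T}+\|\Theta^\eps\|_{C^m[0,T]}\leq \Xi$, so that, after time-integration, they produce no more than $TC[\Xi]+T\|F\|_{X^m_T}^2$. After summing in $\beta$, invoking the ellipticity of $A^0$ on $\overline{\mathcal{V}}$, and integrating from $0$ to $T$, the proposition follows \emph{provided} each $\mathcal{B}_\beta$ is absorbed into the LHS (producing the $\frac{\dd}{\dd t}E_{m,tan}$) up to an admissible remainder.

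The heart of the argument is thus the analysis of $\mathcal{B}_\beta$, split into cases. If $\beta_3\geq 1$, an easy induction using $Z_3=\phi(z)\partial_z$ with $\phi(0)=0$ shows that $Z^\beta\overline p^\eps$ and $Z^\beta\overline u^\eps$ both carry a factor of $\phi(z)$, so $\mathcal{B}_\beta=0$. If $\beta_3=0$, we apply a tangential Leibniz rule to write $(Z^\beta\overline u^\eps)\cdot n_0=Z^\beta(\overline u^\eps\cdot n_0)+\mathcal{C}_\beta$, where $\mathcal{C}_\beta$ only involves derivatives of $\overline u^\eps$ of order at most $m-1$ and is thus controlled by trace estimates and $\Xi$. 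The boundary condition \eqref{boun2} then replaces $\overline u^\eps\cdot n_0$ by $\overline u_\Sscr^\eps\cdot n_0$, which vanishes on $\partial\Omega$ and equals $(\overline l^\eps+\overline\omega^\eps\times x)\cdot n_0$ on $\partial\Sscr(0)$. When $\beta_1+\beta_2\geq 1$, we exploit the fact that $\overline l^\eps,\overline\omega^\eps$ are $x$-independent so $Z^\beta(\overline u_\Sscr^\eps\cdot n_0)=\partial_t^{\beta_0}\overline l^\eps\cdot\Phi_\beta+\partial_t^{\beta_0}\overline\omega^\eps\cdot\Psi_\beta$ with $\Phi_\beta,\Psi_\beta$ smooth functions on $\partial\Sscr(0)$. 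Integrating by parts tangentially on the closed surface $\partial\Sscr(0)$ transfers the $\partial_{y_1}^{\beta_1}\partial_{y_2}^{\beta_2}$ off $\overline p^\eps$, leaving $\int_{\partial\Sscr(0)}\partial_t^{\beta_0}\overline p^\eps(\text{smooth})\,\dd\Gamma$, which is bounded via the standard trace inequality by $\|\partial_t^{\beta_0}\overline p^\eps\|_{H^1(\Fscr(0))}$; this is finite because $\beta_0\leq m-1$ and $\|\overline p^\eps\|_{X^m_T}\leq\Xi$.

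The delicate and structurally essential subcase is $\beta=(\beta_0,0,0,0)$ with $0\leq \beta_0\leq m$, where no tangential integration by parts is available. Here the main contribution is
\begin{equation*}
\partial_t^{\beta_0}\overline l^\eps\cdot\!\!\int_{\partial\Sscr(0)}\!\!\partial_t^{\beta_0}\overline p^\eps n_0\,\dd\Gamma + \partial_t^{\beta_0}\overline\omega^\eps\cdot\!\!\int_{\partial\Sscr(0)}\!\! x\times\partial_t^{\beta_0}\overline p^\eps n_0\,\dd\Gamma,
\end{equation*}
and we invoke the solid equations in \rfb{solideps} differentiated $\beta_0$ times to replace the integrals by $m\partial_t^{\beta_0+1}\overline l^\eps-\partial_t^{\beta_0}(m\overline l^\eps\times\overline\omega^\eps)$ and $J_0\partial_t^{\beta_0+1}\overline\omega^\eps-\partial_t^{\beta_0}((J_0\overline\omega^\eps)\times\overline\omega^\eps)$. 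The principal terms reconstitute
\begin{equation*}
\tfrac{1}{2}\tfrac{\dd}{\dd t}\!\left(m|\partial_t^{\beta_0}\overline l^\eps|^2 + J_0\partial_t^{\beta_0}\overline\omega^\eps\cdot\partial_t^{\beta_0}\overline\omega^\eps\right),
\end{equation*}
which is exactly the solid contribution built into $E_{m,tan}$, while the remaining ``gyroscopic'' terms involve only derivatives of $(\overline l^\eps,\overline\omega^\eps)$ of order $\leq m$, hence are bounded by $C[\Xi]$. After moving the $\frac{\dd}{\dd t}$ term to the LHS one obtains the desired identity for each $\beta$, and summation finishes the proof. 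The hard part is precisely Case 2b: it is the cancellation between the fluid pressure work on $\partial\Sscr(0)$ and the time-derivative of the solid kinetic energy that is responsible for the estimate being uniform in $\eps$; had we not incorporated the solid kinetic energy into $E_{m,tan}$, this boundary term would have required the $\eps$-dependent absorption used in Proposition \ref{schochet}.
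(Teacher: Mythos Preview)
Your plan is correct and follows essentially the same route as the paper's proof: apply $Z^\beta$ with $|\beta|\le m$, pair with $Z^\beta U^\eps$, symmetrize, and then split the boundary term $\mathcal B_\beta$ according to whether $Z^\beta$ contains spatial tangential derivatives (integrate them by parts along $\partial\Sscr(0)$ and use the trace theorem, exploiting $\beta_0\le m-1$) or is a pure time derivative $\partial_t^{\beta_0}$ (use the solid equations in \eqref{solideps} to convert the pressure integrals into $\tfrac12\tfrac{\dd}{\dd t}(m|\partial_t^{\beta_0}\overline l^\eps|^2+J_0\partial_t^{\beta_0}\overline\omega^\eps\cdot\partial_t^{\beta_0}\overline\omega^\eps)$ plus gyroscopic remainders bounded by $C[\Xi]$). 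Your identification of the fluid--solid cancellation in the pure-time case as the structural heart of the uniform-in-$\eps$ estimate is exactly the point.

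Two small places where you should be more explicit, matching what the paper does: (i) the Leibniz remainder $\mathcal C_\beta$ is paired with $Z^\beta\overline p^\eps$, which carries $m$ conormal derivatives, so a naive $L^2(\partial\Fscr(0))$ bound on both factors fails; the paper closes this via the $H^{-1/2}$--$H^{1/2}$ duality on $\partial\Fscr(0)$ (one tangential derivative is moved from $Z^\beta\overline p^\eps$ to the $H^{-1/2}$ index, and $|\sigma|\le m-1$ on the $\overline u^\eps$ factor allows $H^{1/2}$ trace control by $\|U^\eps\|_{X^m_T}$); (ii) the integration by parts also produces the symmetrizer-derivative terms $\tfrac12\int\partial_t A^0\,|Z^\beta U^\eps|^2$ and $\tfrac12\int\div\Ascr\,|Z^\beta U^\eps|^2$, which you should list alongside the commutators in $R_\beta$ (they are harmless, bounded by $C[\Xi]\|Z^\beta U^\eps\|_{L^2}^2$ via \eqref{smoothmatrix}).
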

In this section $C[\cdots]$ stand for a  positive continuous nondecreasing function which
may change from line to line which may depend on $\overline{\mathcal{V}}$ but which 
 does
not depend on $\eps \in (0, 1]$.
\begin{proof}
Note that by Theorem \ref{existeps},  for  $T \in [0, T^\eps]$ the solution is actually $X^{m+1}_{T}$ so that the solution has enough regularity for  the following energy 
estimates and integration by parts  to be  justified.

Applying the conormal vector $Z^\beta$ with $|\beta|\leq m$ to the system \rfb{forcc} and taking the inner product with $Z^\beta U^\eps$ we obtain the resulting equation:
\begin{equation}\label{corstage}
\begin{aligned}
&\frac{1}{2}\frac{\dd}{\dd t}\int_{\Fscr(0)}A^0[\Theta^\eps,U^\eps]Z^\beta U^\eps\cdot Z^\beta U^\eps\dd x+\frac{\eps}{2}\int_{\partial\Fscr(0)}|Z^\beta U^\eps|^2\dd \Gamma+\int_{\partial\Fscr(0)}Z^\beta \overline p^\eps (Z^\beta \overline u^\eps)\cdot n_0\dd \Gamma\\
&\lesssim \int_{\Fscr(0)}[A^0[\Theta^\eps,U^\eps]\partial_t,\m Z^\beta]U^\eps\cdot Z^\beta U^\eps\dd x+\int_{\Fscr(0)}[\Ascr[\Theta^\eps,U^\eps]\cdot \nabla,\m Z^\beta]U^\eps\cdot Z^\beta U^\eps\dd x\\
&\quad +\int_{\Fscr(0)}\partial_tA^0[ \Theta^\eps, U^\eps]Z^\beta U^\eps\cdot Z^\beta U^\eps\dd x+\int_{\Fscr(0)}\div \Ascr[\Theta^\eps, U^\eps]Z^\beta U^\eps\cdot Z^\beta U^\eps\dd x\\
&\quad +\int_{\Fscr(0)}Z^\beta\left(-B[\Theta^\eps,U^\eps]U^\eps+F\right)\cdot Z^\beta U^\eps \dd x.
\end{aligned}
\end{equation}
The last three integrals in \rfb{corstage} can be treated similarly as in the proof of Proposition \ref{schochet}. By using \rfb{smoothmatrix} we then have
\begin{align*}
&\int_{\Fscr(0)}\partial_tA^0[ \Theta^\eps, U^\eps]Z^\beta U^\eps\cdot Z^\beta U^\eps\dd x+\int_{\Fscr(0)}\div \Ascr[\Theta^\eps, U^\eps]Z^\beta U^\eps\cdot Z^\beta U^\eps\dd x\\
&\quad +\int_{\Fscr(0)}Z^\beta\left(-B[\Theta^\eps,U^\eps]U^\eps+F\right)\cdot Z^\beta U^\eps \dd x\\
&\leq C[\Xi]\|Z^\beta U^\eps\|_{L^2(\Fscr(0))}^2 + \|F\|_{X^m_T}^2.
\end{align*}
The integrals involving the commutators can be treated by \rfb{comm}, which implies that
\begin{align*}
&\int_{\Fscr(0)}[A^0[\Theta^\eps,U^\eps]\partial_t,\m Z^\beta]U^\eps\cdot Z^\beta U^\eps\dd x+\int_{\Fscr(0)}[\Ascr[\Theta^\eps,U^\eps]\cdot \nabla,\m Z^\beta]U^\eps\cdot Z^\beta U^\eps\dd x\\
&\leq C[\Xi]\|U^\eps\|_{X^m_T}\|Z^\beta U^\eps\|_{L^2(\Fscr(0))}.
\end{align*}

Now we mainly deal with the boundary integral depending on the pressure. Recalling the boundary condition in \rfb{forcc} and Remark \ref{usexplain}, we have 
\begin{equation*}
Z^\beta(\overline u^\eps\cdot n_0)=
\left\{\begin{aligned}
& 0 &\qquad \forall \m  x\in \partial\Om,\\
& Z^\beta(\overline l^\eps\cdot n_0)+Z^\beta(\overline \omega^\eps\cdot (x\times n_0)) &\qquad \forall \m  x\in \partial\Sscr(0).
\end{aligned}\right.
\end{equation*}
We thus have
\begin{align*}
&\int_{\partial\Fscr(0)}Z^\beta \overline p^\eps (Z^\beta \overline u^\eps)\cdot n_0\dd \Gamma=\int_{\partial\Fscr(0)}Z^\beta\overline p^\eps \left(Z^\beta(\overline u^\eps\cdot n_0)-\sum_{|\gamma|\neq 0, |\sigma+\gamma|\leq m} Z^\sigma\overline u^\eps\cdot Z^\gamma n_0\right)\dd \Gamma\\
&=-\int_{\partial\Fscr(0)}Z^\beta\overline p^\eps\sum_{|\gamma|\neq 0, |\sigma+\gamma|\leq m}Z^\sigma \overline u^\eps\cdot Z^\gamma n_0\dd \Gamma+\int_{\partial\Sscr(0)}Z^\beta\overline p^\eps (Z^\beta(\overline \omega^\eps\cdot (x\times n_0)) )\dd \Gamma.
\end{align*}
For the first boundary integral, since $n_0$ is smooth we estimate for $|\beta|\leq m$:
\begin{align*}
-\int_{\partial\Fscr(0)}Z^\beta\overline p^\eps\sum_{|\gamma|\neq 0, |\sigma+\gamma|\leq m}Z^\sigma \overline u^\eps\cdot Z^\gamma n_0\dd \Gamma
&\lesssim \sum_{|\sigma|\leq m-1}\|Z^\beta\overline p^\eps\|_{H^{-1/2}(\partial\Fscr(0))}\|Z^\sigma\overline u^\eps\|_{H^{1/2}(\partial\Fscr(0))}\\
&\lesssim \sum_{|\sigma|\leq m-1}\|Z^\sigma U^\eps\|_{H^{1/2}(\partial\Fscr(0))}^2\lesssim \sum_{|\sigma|\leq m-1}\|Z^\sigma U^\eps\|_{H^1(\Fscr(0))}^2\\
&\lesssim \|U^\eps\|_{X^m_{T}}^2.
\end{align*}
For the boundary integral at $\partial\Sscr(0)$, we discuss the following two cases for the sake of clarity. If $Z^\beta$ includes at least one  space derivative, i.e. $\beta_0\leq m-1$, integrating by parts along the (smooth) boundary,  we obtain that
$$
\int_{\partial\Sscr(0)}Z^\beta\overline p^\eps (Z^\beta(\overline \omega^\eps\cdot (x\times n_0)) )\dd \Gamma
\lesssim \|\partial_t^{\beta_0}\overline p^\eps\|_{L^2(\partial\Sscr(0))}\left(|\partial_t^{\beta_0}\overline l^\eps|+|\partial_t^{\beta_0}\overline \omega^\eps|\right)
\lesssim \|\Theta^\eps\|_{C^m[0, T]}^2+\|\overline p^\eps\|_{X^m_{T}}^2.
$$
If $Z^\beta$ only includes time derivative, i.e. $Z^\beta=\partial_t^{\beta_0}$ with $\beta_0\leq m$, then we directly have
\begin{align}\label{solipure}
\int_{\partial\Sscr(0)}Z^\beta\overline p^\eps (Z^\beta(\overline \omega^\eps\cdot (x\times n_0)) )\dd \Gamma=\int_{\partial\Sscr(0)}\partial_t^{\beta_0}\overline p^\eps (\partial_t^{\beta_0}\overline l^\eps \cdot n_0+\partial_t^{\beta_0}\overline \omega^\eps\cdot (x\times n_0))\dd\Gamma.
\end{align} 
Applying $\partial_t^{\beta_0}$ to the solid equation of $\overline l^\eps$ in \rfb{solideps} and multiplying the resulting equation by $\partial_t^{\beta_0}\overline l^\eps$ we derive that 
\begin{align*}
\frac{m}{2}\frac{\dd}{\dd t}|\partial_t^{\beta_0}\overline l^\eps|^2&=m\m\partial_t^{\beta_0}(\overline l^\eps\times \overline \omega^\eps)\cdot \partial_t^{\beta_0}\overline l^\eps+\int_{\partial\Sscr(0)}\partial_t^{\beta_0}\overline p^\eps\m \partial_t^{\beta_0}\overline l^\eps\cdot n_0\dd\Gamma\\
&=m\m\sum_{|\gamma_0|\neq 0, |\sigma_0+\gamma_0|\leq m}(\partial_t^{\sigma_0}\overline l^\eps\times \partial_t^{\gamma_0}\overline \omega^\eps)\cdot \partial_t^{\beta_0}\overline l^\eps+\int_{\partial\Sscr(0)}\partial_t^{\beta_0}\overline p^\eps\m \partial_t^{\beta_0}\overline l^\eps\cdot n_0\dd\Gamma.
\end{align*}
Doing the similar multiplication for the equation of $\overline \omega^\eps$ in \rfb{solideps}, we also have
\begin{align*}
\frac{1}{2}\frac{\dd}{\dd t}J_0\partial_t^{\beta_0}\overline \omega^\eps\cdot \partial_t^{\beta_0}\overline \omega^\eps=\sum_{|\gamma_0|\neq 0, |\sigma_0+\gamma_0|\leq m}((J_0\partial_t^{\gamma_0}\overline \omega^\eps)\times \partial_t^{\sigma_0}\overline \omega^\eps)\cdot \partial_t^{\beta_0}\overline \omega^\eps+\int_{\partial\Sscr(0)}(x\times \partial_t^{\beta_0}\overline p^\eps n_0)\cdot \partial_t^{\beta_0}\overline \omega^\eps\dd\Gamma.
\end{align*}
Hence we obtain from \rfb{solipure} that
\begin{multline*}
\int_{\partial\Sscr(0)}Z^\beta\overline p^\eps (Z^\beta(\overline \omega^\eps\cdot (x\times n_0)) )\dd \Gamma
=\frac{m}{2}\frac{\dd}{\dd t}|\partial_t^{\beta_0}\overline l^\eps|^2+\frac{1}{2}\frac{\dd}{\dd t}J_0\partial_t^{\beta_0}\overline \omega^\eps\cdot \partial_t^{\beta_0}\overline \omega^\eps\\
\quad -\m\sum_{|\gamma_0|\neq 0, |\sigma_0+\gamma_0|\leq m}\left(m (\partial_t^{\sigma_0}\overline l^\eps\times \partial_t^{\gamma_0}\overline \omega^\eps)\cdot \partial_t^{\beta_0}\overline l^\eps+((J_0\partial_t^{\gamma_0}\overline \omega^\eps)\times \partial_t^{\sigma_0}\overline \omega^\eps)\cdot \partial_t^{\beta_0}\overline \omega^\eps\right).
\end{multline*}
Moreover, we have the estimate
\begin{align*}
&\sum_{|\gamma_0|\neq 0, |\sigma_0+\gamma_0|\leq m}\left(m (\partial_t^{\sigma_0}\overline l^\eps\times \partial_t^{\gamma_0}\overline \omega^\eps)\cdot \partial_t^{\beta_0}\overline l^\eps+((J_0\partial_t^{\gamma_0}\overline \omega^\eps)\times \partial_t^{\sigma_0}\overline \omega^\eps)\cdot \partial_t^{\beta_0}\overline \omega^\eps\right)\\
&\lesssim \|\overline l^\eps\|_{C^m[0, T]}^2\|\overline \omega^\eps\|_{C^m[0, T]}+\|\overline \omega^\eps\|_{C^m[0, T]}^2\|\overline \omega^\eps\|_{C^m[0, T]}\\
&\leq C[\Xi].
\end{align*}
Summing over $1\leq |\beta|\leq m$ and taking the integration on $[0, T]$, we conclude from \rfb{corstage} that
\begin{equation*}\label{cornorest}
\begin{aligned}
\sup_{t\in [0, T]}E_{m, tan}[\Theta^\eps,U^\eps](t)+\eps\int_0^T\|U^\eps\|_{H^m_{co}(\partial\Fscr(0))}^2\dd t\leq E_{m, tan}[\Theta^\eps,U^\eps](0)+T C[\Xi] + T \|F\|_{X^m_{T}}^2.
\end{aligned}
\end{equation*}
The proof is complete.
\end{proof}

\subsection{Vorticity estimate}

From the estimates in Proposition \ref{cornormaluni}, only the estimates of normal derivatives of $U^\eps$ close to the boundary are missing. Since the boundary becomes characteristic when $\eps$ tends
to zero, in order to get uniform estimates,  we cannot use the system \eqref{forcc} to express normal derivatives in terms of conormal ones
close to the boundary for all the components of $U^\eps$. We shall thus follow for the estimates
the scheme which is used in the characteristic case in \cite{schochet1986compressible} which rely
on additional direct $X^m$ energy estimates for the vorticity and the entropy.

Here, since we have in the velocity equation  $M^{-1} \nabla p$ with a matrix $M^{-1}$, 
we  consider a modified vorticity  in order to avoid the appearance of second order derivatives of the pressure.
We thus instead take  the curl of the product $M\overline u^\eps$, i.e. $\text{Curl}\m(M\overline u^\eps)$.
This is actually a good choice because of the behavior of the matrix $M$ near the boundary $\partial\Fscr(0)$, which we have indicated in Remark \ref{usexplain}.  

We obtain that
\begin{equation}\label{curlnew}
\partial_t\text{Curl}\m(M\overline u^\eps)+\left((\overline u^\eps-\overline u^\eps_\Sscr+\eps(\eta^\eps M)^{-1}\nu)\cdot \nabla\right)\text{Curl}\m(M\overline u^\eps)=\Rscr(U^\eps, \overline l^\eps, \overline \omega^\eps),
\end{equation}
with 
\begin{equation}\label{Rscr}
\begin{aligned}
\Rscr[\Theta^\eps, U^\eps]:&=\text{Curl}\m(\partial_tM\overline u^\eps)+``\nabla(M\overline u^\eps)\cdot \nabla(\overline u^\eps-\overline u^\eps_\Sscr)"+\text{Curl}\m(\overline u^\eps \nabla M\cdot (\overline u^\eps-\overline u^\eps_\Sscr))\\
&\quad +\eps``\nabla(M\overline u^\eps)\cdot \nabla((\eta^\eps M)^{-1}\nu)"+\eps\text{Curl}\m\left((\eta^\eps M)^{-1}\overline u^\eps\nabla M\cdot \nu\right)+(\eta^\eps)^{-2}\nabla\eta^\eps\times \nabla\overline p^\eps\\
&\quad +\text{Curl}\m \left(M\left(\nabla\Jscr\m\Jscr^{-1} \overline u^\eps\cdot (\overline u-\overline u_\Sscr^\eps)+\partial_t\Jscr \Jscr^{-1}\overline u^\eps\right)+\eps(\eta^{\eps})^{-1}F_{u}\right).
\end{aligned}
\end{equation}
During the derivation of \rfb{curlnew} we used the fact that
$$\text{Curl}\m\left( -(\eta^\eps)^{-1}\nabla\overline p^\eps\right)=(\eta^\eps)^{-2}\nabla\eta^\eps\times \nabla\overline p^\eps, $$
$$\text{Curl}\m (u\cdot \nabla) v=(u\cdot \nabla) \text{Curl}\m v + ``\nabla v \cdot \nabla u".$$
In the above the notation $``\nabla v\cdot \nabla u"$ represents any  product of  first derivatives of $u$ and $v$. 

Based on the equation of $\text{Curl}\m (M\overline u^\eps)$ in \rfb{curlnew} we have the following estimate.

\begin{prop}\label{curlestimate}
For $m\geq 3$, we have the $X^{m-1}_T$-norm control of $\text{Curl}\m (M\overline u^\eps)$:
\begin{equation*}
\|\text{Curl}\m(M\overline u^\eps)(t)\|_{X^{m-1}_T(\Fscr(0))}^2
\leq \interleave\text{Curl}\m(M\overline u^\eps)(0)\interleave_{m-1, \Fscr(0)}^2+TC[\Xi] + T \| F\|_{X^m_{T}}^2.
\end{equation*}  
\end{prop}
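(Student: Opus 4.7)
The plan is to perform an $X^{m-1}_T$ energy estimate directly on the transport equation \eqref{curlnew} for $\omega_c := \text{Curl}\,(M\overline u^\eps)$. The key structural observation is that, thanks to the non-characteristic modification introduced in \eqref{matrice2}, the boundary condition in \eqref{forcc}, and the identity $M = I_{3\times 3}$ in a neighborhood of $\partial\Fscr(0)$ recorded in \eqref{Mbound}, the transport velocity
$$w := \overline u^\eps - \overline u^\eps_\Sscr + \eps(\eta^\eps M)^{-1}\nu$$
satisfies, on $\partial\Fscr(0)$, $w\cdot n_0 = (\overline u^\eps - \overline u^\eps_\Sscr)\cdot n_0 + \eps(\eta^\eps)^{-1}\nu\cdot n_0 = \eps/\eta^\eps > 0$. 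In other words, the characteristics of \eqref{curlnew} all leave the fluid domain, so no boundary data for $\omega_c$ needs to be prescribed and every boundary term produced by the energy estimate will carry a favorable sign.

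The first step is to apply $D^\beta$ (as defined in \eqref{D}, globally in the interior chart and in the boundary charts of Subsection \ref{notation}), with $|\beta|\leq m-1$, to \eqref{curlnew}, take the $L^2(\Fscr(0))$ inner product with $D^\beta\omega_c$, and integrate by parts. This yields, for each $\beta$,
$$\frac{1}{2}\frac{\dd}{\dd t}\|D^\beta\omega_c\|_{L^2(\Fscr(0))}^2 + \frac{1}{2}\int_{\partial\Fscr(0)}(w\cdot n_0)|D^\beta\omega_c|^2\,\dd\Gamma = \frac{1}{2}\int_{\Fscr(0)}(\div w)|D^\beta\omega_c|^2\,\dd x - \int_{\Fscr(0)}[D^\beta,w\cdot\nabla]\omega_c\cdot D^\beta\omega_c\,\dd x + \int_{\Fscr(0)}D^\beta\Rscr\cdot D^\beta\omega_c\,\dd x.$$
By the sign property of $w\cdot n_0$, the boundary integral is nonnegative and can be discarded in an upper bound.

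Each remaining term is to be estimated uniformly in $\eps\in(0,1]$ by means of the product and commutator bounds of Proposition \ref{productestimate} (in particular \eqref{comm}, \eqref{proXm}, \eqref{smoothmatrix}). The coefficient $w$ is a smooth function of $(U^\eps,\Upsilon^\eps,\Theta^\eps)$, whose $X^m_T$ and $C^m[0,T]$ norms are controlled by $\Xi$ through the definition \eqref{superT}; consequently both $\div w$ in $L^\infty$ and the commutator $[D^\beta,w\cdot\nabla]\omega_c$ in $L^2$ contribute at most $C[\Xi](1+\|\omega_c\|_{X^{m-1}_T}^2)$, and since $\omega_c$ costs exactly one spatial derivative of $\overline u^\eps$ one has $\|\omega_c\|_{X^{m-1}_T}\leq C\|U^\eps\|_{X^m_T}\leq C\Xi$, so that this term is absorbed into $C[\Xi]$. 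Each summand of $\Rscr$ in \eqref{Rscr} is likewise a product of derivatives of $(U^\eps,\Upsilon^\eps)$ of total order at most $m$, hence bounded in $X^{m-1}_T$ by $C[\Xi]$, with the sole exception of the contribution $\eps(\eta^\eps)^{-1}F_u$, which furnishes the $T\|F\|_{X^m_T}^2$ term after time integration. Summing the differential inequality over $|\beta|\leq m-1$ and integrating in time from $0$ to $T$ then delivers the claimed estimate.

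The delicate point I anticipate is the careful bookkeeping in $[D^\beta,w\cdot\nabla]\omega_c$ and $D^\beta\Rscr$ to ensure that at most $m$ derivatives of $U^\eps$ (and in particular no $(m+1)$-th derivative of $\overline p^\eps$) are ever invoked, so that only the a priori control $\|U^\eps\|_{X^m_T}\leq \Xi$ and not the $\eps$-dependent $X^{m+1}_T$-bound from Theorem \ref{existeps} is used. This is precisely the reason for working with the modified vorticity $\text{Curl}\,(M\overline u^\eps)$ rather than $\text{Curl}\,\overline u^\eps$: taking the curl of the velocity equation would otherwise produce $\text{Curl}\bigl((\eta^\eps M)^{-1}\nabla\overline p^\eps\bigr)$ and thereby second derivatives of $\overline p^\eps$; the factor $M$ inside the curl cancels these contributions and leaves only products of first derivatives of $\overline p^\eps$ with first derivatives of the coefficients, as visible in the term $(\eta^\eps)^{-2}\nabla\eta^\eps\times\nabla\overline p^\eps$ of \eqref{Rscr}.
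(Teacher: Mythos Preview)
Your proposal is correct and follows essentially the same approach as the paper: apply $D^\beta$ with $|\beta|\leq m-1$ to the transport equation \eqref{curlnew}, take the $L^2$ inner product with $D^\beta\omega_c$, use the outward sign of $w\cdot n_0=\eps/\eta^\eps$ on $\partial\Fscr(0)$ (via \eqref{Mbound}) to discard the boundary term, and close with the product/commutator bounds of Proposition \ref{productestimate}. The only minor point worth flagging is that for $m=3$ the commutator estimate \eqref{comm} requires $m\geq 3$ applied at level $m-1=2$, so one must invoke the variant \eqref{comm=2} in that borderline case, as the paper does explicitly.
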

\begin{proof}
Applying the full derivative $D^\beta$, defined in \rfb{D}, with $|\beta|\leq m-1$ to the equation \rfb{curlnew}, we obtain that
\begin{equation}\label{new1}
\begin{aligned}
&\partial_t D^\beta\text{Curl}(M\overline u^\eps)+\left((\overline u^\eps-\overline u^\eps_\Sscr+\eps(\eta^\eps M)^{-1}\nu)\cdot \nabla\right)D^\beta\text{Curl}\m(M\overline u^\eps)\\
&=\left[(\overline u^\eps-\overline u^\eps_\Sscr+\eps(\eta^\eps M)^{-1}\nu)\cdot \nabla, D^\beta\right]\text{Curl}(M\overline u^\eps)+D^\beta \Rscr[\Theta^\eps,U^\eps],
\end{aligned}
\end{equation}
where $\Rscr$ is introduced in \rfb{Rscr}. Taking the inner product of \rfb{new1} and $D^\beta\text{Curl}\m (M\overline u^\eps)$ in $L^2(\Fscr(0))$, we note that
\begin{align*}
&\int_{\Fscr(0)}\left((\overline u^\eps-\overline u^\eps_\Sscr+\eps(\eta^\eps M)^{-1}\nu)\cdot \nabla\right)D^\beta\text{Curl}\m(M\overline u^\eps)\cdot D^\beta \text{Curl}\m (M\overline u^\eps)\dd x\\
&=\frac{\eps}{2}\int_{\partial\Fscr(0)}(\eta^\eps)^{-1} D^\beta\text{Curl}\m(M\overline u^\eps)\cdot D^\beta \text{Curl}\m (M\overline u^\eps)\dd\Gamma
\\ &\quad -\frac{1}{2}\int_{\Fscr(0)}\div \left(\overline u^\eps-\overline u^\eps_\Sscr+\eps(\eta^\eps M)^{-1}\nu\right)D^\beta\text{Curl}\m(M\overline u^\eps)\cdot D^\beta \text{Curl}\m (M\overline u^\eps)\dd x,
\end{align*}
where we used the fact \rfb{Mbound}.
This gives us the following expression:
\begin{equation}\label{aim}
\begin{aligned}
&\frac{1}{2}\frac{\dd}{\dd t}\|D^\beta\text{Curl}\m(M\overline u^\eps)\|_{L^2(\Fscr(0))}^2+\frac{\eps}{2}\int_{\partial\Fscr(0)}(\eta^\eps)^{-1} D^\beta\text{Curl}\m(M\overline u^\eps)\cdot D^\beta \text{Curl}\m (M\overline u^\eps)\dd\Gamma\\
&=\frac{1}{2}\int_{\Fscr(0)}\div \left(\overline u^\eps-\overline u^\eps_\Sscr+\eps(\eta^\eps M)^{-1}\nu\right)D^\beta\text{Curl}\m(M\overline u^\eps)\cdot D^\beta \text{Curl}\m (M\overline u^\eps)\dd x\\
&\quad +\int_{\Fscr(0)}\left[(\overline u^\eps-\overline u^\eps_\Sscr+\eps(\eta^\eps M)^{-1}\nu)\cdot \nabla, D^\beta\right]\text{Curl}(M\overline u^\eps)\cdot D^\beta \text{Curl}\m (M\overline u^\eps)\dd x\\
&\quad +\int_{\Fscr(0)}D^\beta \Rscr[\Theta^\eps, U^\eps]\cdot D^\beta \text{Curl}\m (M\overline u^\eps)\dd x.
\end{aligned}
\end{equation}
We estimate the right hand side of \rfb{aim} and have
\begin{align*}
&\int_{\Fscr(0)}\div\left(\overline u^\eps-\overline u^\eps_\Sscr+\eps(\eta^\eps M)^{-1}\nu\right)D^\beta\text{Curl}\m(M\overline u^\eps)\cdot D^\beta \text{Curl}\m (M\overline u^\eps)\dd x\\
&\lesssim \left(\|\div(\overline u^\eps-\overline u_\Sscr^\eps)\|_{L^\infty(\Fscr(0))}+\eps \|\nabla((\eta^\eps M)^{-1}\nu)\|_{L^\infty(\Fscr(0))}\right)\|D^\beta\text{Curl}\m(M\overline u^\eps)\|_{L^2(\Fscr(0))}^2\\
&\lesssim C[\Xi]\|D^\beta\text{Curl}\m(M\overline u^\eps)\|_{L^2(\Fscr(0))}^2.
\end{align*}
For the integral including the commutator, using the fact that $[\nabla, D^\beta]=0$ we obtain from \rfb{comm} 
if $m-1 \geq 3$ or \eqref{comm=2} if $m-1=2$ 
that
\begin{align*}
&\int_{\Fscr(0)}\left[(\overline u^\eps-\overline u^\eps_\Sscr+\eps(\eta^\eps M)^{-1}\nu)\cdot \nabla, D^\beta\right]\text{Curl}(M\overline u^\eps)\cdot D^\beta \text{Curl}\m (M\overline u^\eps)\dd x\\
&\lesssim \|[(\overline u^\eps-\overline u_\Sscr^\eps+\eps(\eta^\eps M)^{-1}\nu)\cdot \nabla, D^\beta]\text{Curl}\m (M\overline u^\eps)\|_{L^2(\Fscr(0))}\|D^\beta\text{Curl}\m(M\overline u^\eps)\|_{L^2(\Fscr(0))}\\
&\lesssim C\left[\|\overline u^\eps-\overline u^\eps_\Sscr\|_{X^{m}_T}, \eps\|(\eta^\eps M)^{-1}\nu\|_{X^{m}_T}\right] \|\text{Curl}\m(M\overline u^\eps)\|_{X^{m-1}_T}\|D^\beta\text{Curl}\m(M\overline u^\eps)\|_{L^2(\Fscr(0))}\\
&\lesssim C[\Xi]\|\text{Curl}\m(M\overline u^\eps)\|_{X^{m-1}_T}\|D^\beta\text{Curl}\m(M\overline u^\eps)\|_{L^2(\Fscr(0))},
\end{align*}
where we used the product estimate in Remark \ref{productestimate}.
Finally for the integral with the lower order term $\Rscr$, we find that
\begin{align*}
\int_{\Fscr(0)}D^\beta \Rscr[\Theta^\eps, U^\eps]\cdot D^\beta \text{Curl}\m (M\overline u^\eps)\dd x
&\lesssim \|D^\beta\Rscr[\Theta^\eps, U^\eps]\|_{L^2(\Fscr(0))}\|D^\beta\text{Curl}\m(M\overline u^\eps)\|_{L^2(\Fscr(0))}\\
&\lesssim C[\Xi] + \|F\|_{X^m_{T}}^2.
\end{align*}
Combining the above estimates and taking the integration with respect to time on $(0, T)$ we obtain from \rfb{aim}, by summing over $|\beta|\leq m-1$, that
\begin{align*}
&\|\text{Curl}\m(M\overline u^\eps)(t)\|_{X^{m-1}_T(\Fscr(0))}^2+\eps\Xi^{-1}\|\text{Curl}\m(M\overline u^\eps)\|_{H^{m-1}((0, T)\times \partial\Fscr(0))}^2\\
&\leq \interleave\text{Curl}\m(M\overline u^\eps)(0)\interleave_{m-1, \Fscr(0)}^2+TC[ \Xi] + T \|F\|_{X^m_{T}}^2,
\end{align*}
which ends the proof.
\end{proof}

\subsection{Estimate of the entropy $\overline s^\eps$}

Now we take the equation of $\overline s^\eps$ from \rfb{forcc}, which is a transport system. As for  the estimate of curl, we estimate direclty the $X_T^{m}(\Fscr(0))$-norm.

\begin{prop}\label{entroprop}
For the entropy $\overline s^\eps$, we obtain its $X_T^m(\Fscr(0))$ control as below:
\begin{align*}
\|\overline s^\eps\|_{X^m_T(\Fscr(0))}^2\leq \interleave\overline s^\eps(0)\interleave_{m,\Fscr(0)}^2+ TC[\Xi]
 + T \|F\|_{X^m_{T}}^2.
\end{align*}
\end{prop}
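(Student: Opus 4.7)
The entropy satisfies the transport equation obtained from the third row of \eqref{forcc}--\eqref{matrice2}--\eqref{Ftheta}, namely
\begin{equation*}
\partial_{t} \overline{s}^\eps + \bigl((\overline{u}^\eps - \overline{u}_\Sscr^\eps + \eps\nu) \cdot \nabla\bigr) \overline{s}^\eps = \eps (\nu \cdot \nabla)\tilde{s}^n,
\end{equation*}
which has exactly the same structure as the vorticity equation \eqref{curlnew} but is scalar. The plan is therefore to follow the strategy of Proposition \ref{curlestimate} essentially verbatim, with the additional benefit that no reorganization (as was needed via $\mathrm{Curl}(M\overline u^\eps)$ for the velocity) is necessary.

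First, apply the full derivative $D^\beta$ from \eqref{D} with $|\beta|\leq m$, take the $L^2(\Fscr(0))$ inner product with $D^\beta \overline{s}^\eps$, and integrate by parts the transport term. The key observation is that the transport vector field $\overline u^\eps - \overline u_\Sscr^\eps + \eps\nu$ has a favorable sign on $\partial\Fscr(0)$: thanks to the boundary condition $(\overline u^\eps - \overline u_\Sscr^\eps)\cdot n_{0}=0$ from \eqref{forcc} and the fact that $\nu|_{\partial\Fscr(0)}=n_0$, the boundary flux reduces to
\begin{equation*}
\frac{\eps}{2}\int_{\partial\Fscr(0)}|D^\beta \overline{s}^\eps|^2\,\dd\Gamma \geq 0,
\end{equation*}
which can be dropped (or kept, as an additional dissipation term, depending on whether one wants the boundary control). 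The divergence term $\tfrac12\int \div(\overline u^\eps-\overline u_\Sscr^\eps+\eps\nu)\,|D^\beta \overline{s}^\eps|^2\,\dd x$ is bounded by $C[\Xi]\|D^\beta\overline{s}^\eps\|_{L^2}^2$ using the definition \eqref{superT} of $T^\eps$.

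Second, the commutator $\bigl[(\overline u^\eps-\overline u_\Sscr^\eps+\eps\nu)\cdot \nabla,\, D^\beta\bigr]\overline{s}^\eps$ is handled by the standard product/commutator estimates from Proposition \ref{productestimate}, which give (since $m\geq 3$, so Sobolev embeddings apply)
\begin{equation*}
\bigl\|\bigl[(\overline u^\eps-\overline u_\Sscr^\eps+\eps\nu)\cdot \nabla,\, D^\beta\bigr]\overline{s}^\eps\bigr\|_{L^2(\Fscr(0))} \leq C[\Xi]\,\|\overline{s}^\eps\|_{X^m_T},
\end{equation*}
uniformly in $\eps\in(0,1]$, exploiting that $\|\overline u^\eps\|_{X^m_T},\|\overline u_\Sscr^\eps\|_{X^m_T},\|\nu\|_{X^m_T}$ are all controlled by $\Xi$ (via the definitions of $\overline u_\Sscr^\eps$ in terms of $\Theta^\eps$ in \eqref{usaubord} and the smoothness of $\nu$). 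The source term $\eps(\nu\cdot\nabla)\tilde s^n$ contributes $\|F\|_{X^m_T}^2$ after Cauchy--Schwarz.

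Third, summing over $|\beta|\leq m$, integrating in time on $[0,T]$, and absorbing the quadratic $\|\overline{s}^\eps\|_{X^m_t}^2$ term into the left hand side via Gr\"onwall (or, equivalently, noting it is bounded by $TC[\Xi]$ on $[0,T^\eps]$) yields the announced estimate
\begin{equation*}
\|\overline{s}^\eps\|_{X^m_T(\Fscr(0))}^2 \leq \interleave \overline{s}^\eps(0)\interleave_{m,\Fscr(0)}^2 + T\,C[\Xi] + T\,\|F\|_{X^m_T}^2.
\end{equation*}
The main technical point, as in Proposition \ref{curlestimate}, is the verification that the non-characteristic perturbation $\eps\nu$ does not spoil the estimate uniformly in $\eps$: this is ensured precisely by the vanishing of $(\overline u^\eps-\overline u_\Sscr^\eps)\cdot n_0$ on $\partial\Fscr(0)$, so that the boundary contribution from the $\eps\nu$ piece has the good sign and the commutator is multiplied by $\eps\leq 1$.
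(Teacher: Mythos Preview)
Your proposal is correct and follows exactly the approach the paper intends: the paper's own proof simply writes down the differentiated transport equation for $D^\beta\overline s^\eps$ and then says ``the proof of the energy estimate then follows the same lines as above, this is left to the reader,'' referring to the curl estimate in Proposition~\ref{curlestimate}. You have faithfully reconstructed those omitted details, including the correct handling of the boundary term via the sign of $\eps\nu\cdot n_0$ and the commutator via Proposition~\ref{productestimate}.
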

\begin{proof}
Applying the full derivative $D^\beta$ with $|\beta|\leq m$ to the equation of $\overline s^\eps$, we have
\begin{equation*}\label{entropy}
\partial_tD^\beta\overline s^\eps+\left((\overline u^\eps-\overline u^\eps_\Sscr+\eps\nu)\cdot \nabla\right) D^\beta\overline s^\eps=\left[(\overline u^\eps-\overline u^\eps_\Sscr+\eps\nu)\cdot \nabla, D^\beta\right]\overline s^\eps+\eps D^\beta(F_{s}).
\end{equation*}
The proof of the energy estimate then follows the same lines as above, this is left to the reader.
\end{proof}

\subsection{Control of normal derivatives close to the boundary}
The goal of this subsection is to prove:
\begin{prop}\label{propnorm}
For $m \geq 3$, $T \leq T^\eps$,  we have
  \begin{multline}
  \label{tobedone}
\| U^\eps\|_{X^m_{T}}
  \leq  C[ \|U^\eps \|_{X^{m-1}_{T}}, \| \Theta^\eps \|_{C^{m-1}([0, T])} ] (  \| U^\eps
   \|_{L^\infty_{T}H^m_{co}}   \\ 
  +  \|U^\eps \|_{X^{m-1}_{T}} + \|F \|_{X^{m-1}_{T}}) + \|\overline s^\eps \|_{X^{m}_{T}}  +  \| \text{Curl}\m M \overline u^\eps \|_{X^{m-1}_{T}}.
\end{multline}  
\end{prop}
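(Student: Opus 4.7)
The goal is to recover the normal derivatives of $U^\eps$ that are missing from the conormal norm $\|U^\eps\|_{L^\infty_T H^m_{co}}$. Since conormal and full norms coincide away from $\partial\mathcal{F}(0)$, I will work in a boundary chart $\mathcal{O}_i$ with coordinates $(y_1,y_2,z)$, $z\geq 0$, in which $\partial_{y_1},\partial_{y_2}$ are tangential and $\partial_z$ is the missing normal direction. In such a chart, $\|U^\eps\|_{X^m_T(\mathcal{O}_i)}$ is equivalent to $\sum_{i+a+b+c\leq m} \|\partial_t^i \partial_{y_1}^a \partial_{y_2}^b \partial_z^c U^\eps\|_{L^\infty_T L^2}$, and I proceed by induction on the number $c$ of pure normal derivatives: for $c=0$ the control is directly given by $\|U^\eps\|_{L^\infty_T H^m_{co}}$, and in the inductive step I express $\partial_z^{c+1}$ of each scalar component of $U^\eps$ in terms of quantities involving at most $c$ normal derivatives.

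I treat the five scalar components of $U^\eps = (\overline p^\eps, \overline u^\eps, \overline s^\eps)$ according to the Jordan structure of the boundary matrix, which has rank $2$ at $\{z=0\}$. For the entropy, the bound is already contained in $\|\overline s^\eps\|_{X^m_T}$. For the two tangential velocity components I invoke the identities, valid near the boundary because $M=I$ there by Remark \ref{usexplain},
\[
\partial_z \overline u_1^\eps = (\mathrm{curl}\,\overline u^\eps)_2 + \partial_{y_1} \overline u_3^\eps, \qquad \partial_z \overline u_2^\eps = \partial_{y_2} \overline u_3^\eps - (\mathrm{curl}\,\overline u^\eps)_1,
\]
which trade $\partial_z$ on $\overline u_\tau^\eps$ for $\mathrm{curl}(M\overline u^\eps)$ (controlled in $X^{m-1}_T$ by Proposition \ref{curlestimate}) and tangential derivatives of $\overline u_3^\eps$, whose normal derivatives appear at order $\leq c$ and are handled by the inductive hypothesis.

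For the non-characteristic pair $(\overline p^\eps, \overline u_3^\eps)$, I isolate $\mathrm{div}\,\overline u^\eps$ in the pressure equation of \rfb{forcc} (giving $\partial_z \overline u_3^\eps$ modulo tangential derivatives) and $\partial_z \overline p^\eps$ in the third component of the momentum equation. This yields, near the boundary,
\[
\begin{pmatrix} 1 & \eps \\ \eps & 1 \end{pmatrix} \begin{pmatrix} \partial_z \overline u_3^\eps \\ \partial_z \overline p^\eps \end{pmatrix} = \begin{pmatrix} R_1 \\ R_2 \end{pmatrix},
\]
with determinant $1-\eps^2 \geq \tfrac{3}{4}$ for $\eps\leq \tfrac{1}{2}$ (which can be assumed since the aim is to pass to $\eps\to 0$), hence invertible with $\eps$-uniformly bounded inverse. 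Here $R_1,R_2$ collect $\partial_t \overline p^\eps,\partial_t \overline u_3^\eps$, tangential derivatives of $\overline u^\eps$, the lower-order $BU^\eps$ term, the source $F$, and the transport contributions $(u'\cdot\nabla)\overline p^\eps$ and $(u'\cdot\nabla)\overline u_3^\eps$ where $u' := \overline u^\eps - \overline u_\Sscr^\eps$. The only a priori dangerous piece is the normal part $u'_3\partial_z$; but the boundary condition $u'\cdot n_0=0$ at $z=0$ forces $u'_3 = z\cdot g$ with $g$ smooth, so $u'_3 \partial_z = (z/\phi(z))\,g\,Z_3$ is a conormal operator.

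Applying $\partial_t^i \partial_{y_1}^a \partial_{y_2}^b \partial_z^c$ with $i+a+b+c = m-1$ to the expressions obtained above, and invoking the Moser-type product estimates of Proposition \ref{productestimate} to handle the nonlinear coefficients, delivers bounds on $\partial_z^{c+1}$-quantities by the right-hand side of \rfb{tobedone} plus normal derivatives of order $\leq c$, closing the induction. The main technical hurdle is the careful bookkeeping of commutators when the high-order differential operators cross the nonlinear coefficients depending on $(\Theta^\eps,\Upsilon^\eps,U^\eps)$ and the chart-dependent geometric factors, while maintaining $\eps$-independent constants; the crucial structural fact making $\eps$-uniform estimates possible — unlike in Step 2 of Proposition \ref{schochet}, where the non-characteristic $\eps$-perturbation was itself used for inversion — is that the acoustic $2\times 2$ subsystem above remains non-characteristic even at $\eps=0$.
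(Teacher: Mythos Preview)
Your approach — decompose $U^\eps$ near the boundary into entropy (handled by $\|\overline s^\eps\|_{X^m}$), tangential velocity (handled via $\mathrm{Curl}(M\overline u^\eps)$), and the acoustic pair $(\overline p^\eps,\overline u^\eps\cdot n)$ recovered from a $2\times 2$ subsystem that stays non-characteristic as $\eps\to 0$ — is exactly the paper's strategy. Two points need fixing.

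First, your computations are implicitly flat-boundary. In the chart $(y,z)$ with $z=x_3-\varphi_i(y)$, the Cartesian component $\overline u_3^\eps$ is \emph{not} the normal component $\overline u^\eps\cdot n$ unless $\nabla\varphi_i=0$; your curl identities acquire corrections $-\partial_{x_j}\varphi_i\,\partial_z\overline u_3^\eps$, the chart expression of $\mathrm{div}\,\overline u^\eps$ contains $-\nabla\varphi_i\cdot\partial_z\overline u_h^\eps$, and the boundary condition $u'\cdot n_0=0$ does not give $u'_3=0$ at $z=0$ but rather $u'_3-\nabla\varphi_i\cdot u'_h=0$. What vanishes at $z=0$ is the coefficient of $\partial_z$ in $u'\cdot\nabla$ written in chart coordinates, not $u'_3$ itself. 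The paper works with $\overline u^\eps_n=\overline u^\eps\cdot n$ and $\overline u^\eps_\tau=\Pi\overline u^\eps$ precisely to decouple these geometric cross-terms.

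Second, and more seriously, your induction does not close as written. You put the normal transport $u'_z\partial_z\overline p^\eps$ into $R_1$, rewritten as $\tilde g\, Z_3\overline p^\eps$ with $\tilde g=(z/\phi(z))g$ bounded. But applying $\partial_t^i\partial_y^{a+b}\partial_z^c$ to this produces at top order $\tilde g\,\phi\,\partial_t^i\partial_y^{a+b}\partial_z^{c+1}\overline p^\eps$, which has $c+1$ normal derivatives — exactly the quantity being estimated, not something covered by the inductive hypothesis or by $\|U^\eps\|_{H^m_{co}}$. The rescue is that $\tilde g\,\phi=zg$ is $O(z)$, hence small in a $\delta$-neighborhood of the boundary (with $\delta$ depending on $\|U^\eps\|_{X^{m-1}_T},\|\Theta^\eps\|_{C^{m-1}}$), and can be absorbed into the left-hand side; you need to say this. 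The paper organizes the same absorption differently: it keeps $(\overline u^\eps-\overline u_\Sscr^\eps)_n\partial_z$ \emph{inside} the acoustic matrix $\mathscr A_z^\eps$ and uses $|(\overline u^\eps-\overline u_\Sscr^\eps)_n|\leq \delta\, C[\Xi]$ to ensure uniform invertibility near the boundary, then iterates by trading $\partial_z$ for $\partial_t$ (rather than inducting on $c$), invoking the $\mu$-mechanism of Proposition~\ref{calculus} for the resulting commutators. Either route works, and the vanishing of the normal component of $u'$ at the boundary is indeed the crux; but ``conormal operator'' is only a slogan — the actual closing step is the smallness/absorption, which your sketch omits.
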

\begin{proof}
Again the difficulty is to get the control of normal derivatives close to the boundary.
By using the same arguments as in the derivation of \eqref{notice}, we still have
\begin{equation}
\label{normal1}
\|U^\eps\|_{X^{m}_T(\mathcal{O}_i)} \lesssim  \sum_{i=0}^{m-1}\|\partial_t^i\partial_z^{m-i} U^\eps\|_{X^0_{T}(\mathcal{O}_i)}
+ \|U^\eps\|_{X^{m-1}_{T}} +  \|U^\eps\|_{L^\infty_{T} H^{m}_{co}}.
\end{equation}
where in  $\mathcal{O}_{i}$ the boundary can be locally parametrized as a graph $x_{3}= \varphi_{i}(x_{1}, x_{2}).$
In local coordinates, the unit  normal is given by 
$$n(\Phi_i(y, z))=\frac{1}{\sqrt{1+|\nabla\varphi_i(y)|^2}}\begin{bmatrix}
\partial_{x_1}\varphi_i(y) \\ \partial_{x_2}\varphi_i(y) \\ -1
\end{bmatrix}. $$
We note that the normal vector above can be smoothly extended away from the boundary as a vector field independent of $x_3$. With this we define the orthogonal projection
of a vector $X$:
\begin{equation*}
\Pi(\Phi_i(y,z))X=X-X\cdot n(\Phi_i(y,z))n(\Phi_i(y,z)),
\end{equation*}
which projects on the tangent space to the boundary that is the space generated by
 $(\partial_{y_{1}}, \partial_{y_{2}})$.
We can then use the decomposition of the vector field $\overline{u}^\eps$:
$$\overline u^\eps=\overline u^\eps_\tau+\overline u^\eps_n n, \quad  \overline u^{\eps}_{\tau}
 = \Pi\overline u^\eps, \quad u^\eps_n = \overline u^\eps \cdot n .$$

From \eqref{normal1}, we can then obtain
\begin{equation}
\label{normal2}
\begin{aligned}
\|U^\eps\|_{X^{m}_T(\mathcal{O}_i)} &\lesssim  \sum_{i=0}^{m-1} \left(\|\partial_t^i\partial_z^{m-i} (\overline p^\eps, 
 \overline u^\eps \cdot n)\|_{X^0_{T}(\mathcal{O}_i)} +  \|\partial_t^i\partial_z^{m-i} 
 \overline u^\eps_{\tau}\|_{X^0_{T}(\mathcal{O}_i)} \right) \\ &\quad \quad + \|\overline s^\eps \|_{X^{m}_{T}} 
+ \|U^\eps\|_{X^{m-1}_{T}} +  \|U^\eps\|_{L^\infty_{T} H^{m}_{co}}.
\end{aligned}
\end{equation}
To control the part involving $\overline u^\eps_{\tau}$, we shall use our modified vorticity.
Note that  thanks to  \eqref{Mbound} in Remark \eqref{usexplain}, we can assume  that $M(t,x)$ is the identity
matrix in $\mathcal{O}_{i}$.  
We thus have that 
$$ \| \text{Curl}\m \overline u^\eps \|_{X^{m-1}_{T}(\mathcal{O}_{i})} \leq   \| \text{Curl}\m M \overline u^\eps \|_{X^{m-1}_{T}}.$$
 According to the definition of curl, for any  vector field  $X$ we have 
\begin{equation*}\label{tangenofnormal}
\text{Curl}\m X\times n = \Pi\left(\text{Curl}\m X\times n\right)=\frac{1}{2}\Pi(\nabla X-\nabla X^\intercal)n=\frac{1}{2}\left( \Pi \partial_n X- (\Pi\nabla) X  n\right).
\end{equation*}
This yields
$$ \| \partial_{z} \overline u^\eps_{\tau} \|_{X^{m-1}_{T}(\mathcal{O}_i)}
 \lesssim  \| \text{Curl}\m M \overline u^\eps \|_{X^{m-1}_{T}}  + \|\nabla_{y}U^\eps\|_{X^{m-1}_{T}(\mathcal{O}_{i})} 
  + \|U^\eps\|_{X^{m-1}_{T}}.$$
 We thus have from \eqref{normal2} that
 \begin{equation}\label{normal3}
 \begin{aligned}
\|U^\eps\|_{X^{m}_T(\mathcal{O}_i)} &\lesssim  \sum_{i=0}^{m-1} \|\partial_t^i\partial_z^{m-i} (\overline p^\eps, 
 \overline u^\eps \cdot n)\|_{X^0_{T}(\mathcal{O}_i)}  + \| \nabla_{y} U^\eps \|_{X^{m-1}_{T}(\mathcal{O}_{i})}  \\ 
 &\quad + \|\overline s^\eps \|_{X^{m}_{T}}  +  \| \text{Curl}\m M \overline u^\eps \|_{X^{m-1}_{T}}
+ \|U^\eps\|_{X^{m-1}_{T}} +  \|U^\eps\|_{L^\infty_{T} H^{m}_{co}}.
\end{aligned}  
\end{equation}

It thus remains to estimate   $\|\partial_t^i\partial_z^{m-i} (\overline p^\eps, 
 \overline u^\eps \cdot n)\|_{X^0_{T}(\mathcal{O}_i)}$.


Still using the notation introduced in Subsection \ref{notation} and $\nabla_y$ defined in \rfb{newgradient}, we shall consider  the equations  for  $\overline p^\eps$ and $\overline u^\eps_n$ from \rfb{forcc}.
For any vector field $X$ we denote by $X_{h}= (X_{1}, X_{2})$ and  $X_{3}$ the 
coordinates in the canonical basis of $\mathbb{R}^3$ and we still use
 $X_{n}= X \cdot n$, $X_{\tau}= \Pi X$.

Since we can use again  \rfb{Mbound} in Remark \ref{usexplain}, we get that in $\mathcal{O}_{i}$,
\begin{equation*}\label{equanor}
\begin{aligned}
\mathscr{A}_z^\eps[\Theta^\eps, U^\eps]
\begin{bmatrix}
\partial_z\overline p^\eps \\ \partial_z\overline u^\eps_n 
\end{bmatrix}&=\begin{bmatrix}
-\partial_t\overline p^\eps-(\overline u^\eps-\overline u^\eps_\Sscr)_h\cdot \nabla_y\overline p^\eps-\eps\alpha^\eps \nu_h\cdot \nabla_y\overline p^\eps-\alpha^\eps\nabla_y\cdot \overline u^\eps_h\\
-\partial_t\overline u^\eps_n-(\overline u^\eps-\overline u_\Sscr)_h\cdot \nabla_\tau\overline u^\eps_n-\eps(\eta^\eps)^{-1}\nu_h\cdot \nabla_y\overline u^\eps_n-(\eta^\eps)^{-1}\nabla_y\overline p^\eps\cdot\nabla_y\varphi_i
\end{bmatrix}\\
&\quad +
\begin{bmatrix}
	-\alpha^\eps\m \text{tr}(\Jscr_2\m\overline u^\eps)+\alpha^\eps F_{p}n\\
	\left(-\Jscr_1^{-1}\Jscr_2\m \overline u^\eps\cdot (\overline u-\overline u_\Sscr^\eps)-\nabla_\phi V\m\overline u^\eps\right)_n+(\eta^\eps)^{-1} (F_{u})_n
\end{bmatrix}\\
&:= \begin{bmatrix}
\Mscr_1 \\ \Mscr_2 \end{bmatrix} +\begin{bmatrix} 
\Rscr_1 \\ \Rscr_2 \end{bmatrix},
\end{aligned}
\end{equation*}
where the  matrix on the left hand side is 
$$\mathscr{A}_z^\eps[\Theta^\eps, U^\eps]:=-\begin{bmatrix}
	(\overline u^\eps-\overline u^\eps_\Sscr)_n+\eps\alpha^\eps \nu_n & \alpha^\eps \\
	(\eta^\eps)^{-1}(1+|\nabla_y\varphi_i|^2)& (\overline u^\eps-\overline u^\eps_\Sscr)_n+\eps(\eta^\eps)^{-1}\nu_n
\end{bmatrix}.$$
 We note  that we can assume that  $\mathscr{A}_z^\eps[\Theta^\eps, U^\eps]$ is uniformly  invertible in $\mathcal{O}_i$. Indeed,
 since $(\overline u^\eps-\overline u_\Sscr^\eps)_n$ vanishes on the boundary, we have that
 in a $\delta$ neighborhood of the boundary
 $$ |(\overline u^\eps-\overline u_\Sscr^\eps)_n|\leq \delta C[\Xi],$$
 and therefore by choosing $\delta$ small enough that the determinant $\det(\mathscr{A}_z^\eps)$ satisfies
$$|\det(\mathscr{A}_z^\eps)|\geq  \kappa,$$
where $\kappa$ depends only on $\mathcal{V}$.

We thus get  the expression of $\partial_z\overline p^\eps$ and $\partial_z\overline u^\eps_n$ in $\mathcal{O}_i$:
\begin{equation*}\label{acousticnormal}
\begin{bmatrix}\partial_z\overline p^\eps \\ \partial_z\overline u^\eps_n 
\end{bmatrix}=(\mathscr{A}_z^\eps)^{-1}[\Theta^\eps, U^\eps]\left(
\begin{bmatrix}
\Mscr_1 \\ \Mscr_2 \end{bmatrix} +\begin{bmatrix} 
\Rscr_1 \\ \Rscr_2 \end{bmatrix}\right).
\end{equation*}
We then apply $\partial_{t}^i \partial_{z}^{m-1 - i}$ to the above expression.
By using again Proposition \ref{productestimate} for $n=m-1$, $k=0$,  the terms involving $\mathcal{R}_{j}$, $j=1\, 2$,
which contains only zero order terms,  can be estimated as
$$ \left\| \partial_{t}^i \partial_{z}^{m-1 - i} \left( (\mathscr{A}_z^\eps)^{-1}
\begin{bmatrix} 
\Rscr_1 \\ \Rscr_2 \end{bmatrix} \right) \right\|_{X^0_{T}(\mathcal{O}_{i})}
 \leq C[ \|U^\eps \|_{X^{m-1}_{T}}, \| \Theta^\eps \|_{C^{m-1}([0, T])} ] (  \|U^\eps \|_{X^{m-1}_{T}} + 
 \|F\|_{X^{m-1}_{T}}).$$
 To estimate the terms involving  $\mathcal{M}_{j}$, we  write
 \begin{align*}    
 &\left\| \partial_{t}^i \partial_{z}^{m-1 - i} \left( (\mathscr{A}_z^\eps)^{-1}
\begin{bmatrix} 
\Mscr_1 \\ \Mscr_2 \end{bmatrix} \right) \right\|_{X^0_{T}(\mathcal{O}_{i})} \\
  &\leq C[ \|U^\eps \|_{X^{m-1}_{T}}, \| \Theta^\eps \|_{C^{m-1}([0, T])} ] (  \| \partial_{t}^{i+1}
  \partial_{z}^{m-1-i} (\overline p^\eps, 
 \overline u^\eps \cdot n)  \|_{X^0_{T}(\mathcal{O}_{i})} + \| \nabla_{y} U^\eps \|_{X^{m-1}_{T}(\mathcal{O}_{i})})  + 
  \mathcal{C},
  \end{align*}
  where $\mathcal{C}$ is a commutator term. This commutator can be controlled by using
   \eqref{calculus2} in Proposition \ref{calculus}, 
   we obtain for every $\mu>0$ to be chosen
  \begin{align*}    
 &\left\| \partial_{t}^i \partial_{z}^{m-1 - i} \left( (\mathscr{A}_z^\eps)^{-1}
\begin{bmatrix} 
\Mscr_1 \\ \Mscr_2 \end{bmatrix} \right) \right\|_{X^0_{T}(\mathcal{O}_{i})} \\
 & \leq C[ \|U^\eps \|_{X^{m-1}_{T}}, \| \Theta^\eps \|_{C^{m-1}([0, T])} ] (  \| \partial_{t}^{i+1}
  \partial_{z}^{m-1-i}  (\overline p^\eps, 
 \overline u^\eps \cdot n) \|_{X^0_{T}(\mathcal{O}_{i})} + \| \nabla_{y} U^\eps \|_{X^{m-1}_{T}(\mathcal{O}_{i})}  \\ 
 &\quad + \mu \| U^\eps \|_{X^m_{T}(\mathcal{O}_{i})} 
  + C_{\mu} \|U^\eps \|_{X^{m-1}_{T}}).
  \end{align*}
  This yields
\begin{align*}
\|\partial_t^i\partial_z^{m-i} (\overline p^\eps, 
 \overline u^\eps \cdot n)\|_{X^0_{T}(\mathcal{O}_i)}
 & \leq  C[ \|U^\eps \|_{X^{m-1}_{T}}, \| \Theta^\eps \|_{C^{m-1}([0, T])} ] (  \| \partial_{t}^{i+1}
  \partial_{z}^{m-1-i} (\overline p^\eps, 
 \overline u^\eps \cdot n) \|_{X^0_{T}(\mathcal{O}_{i})}\\ 
 &\quad + \| \nabla_{y} U^\eps \|_{X^{m-1}_{T}(\mathcal{O}_{i})} + \mu \| U^\eps \|_{X^m_{T}(\mathcal{O}_{i})} 
  + C_{\mu} \|U^\eps \|_{X^{m-1}_{T}} + \|F \|_{X^{m-1}_{T}}),
\end{align*}  
  for every $0 \leq i \leq m-1$.  By iterating the above estimate as before, we thus get
  \begin{align*}
\|\partial_t^i\partial_z^{m-i} (\overline p^\eps, 
 \overline u^\eps \cdot n)\|_{X^0_{T}(\mathcal{O}_i)}
&  \leq  C[ \|U^\eps \|_{X^{m-1}_{T}}, \| \Theta^\eps \|_{C^{m-1}([0, T])} ] (  \| U^\eps
   \|_{L^\infty_{T}H^m_{co}} + \| \nabla_{y} U^\eps \|_{X^{m-1}_{T}(\mathcal{O}_{i})}  \\ 
   &\quad + \mu \| U^\eps \|_{X^m_{T}(\mathcal{O}_{i})} 
  + C_{\mu} \|U^\eps \|_{X^{m-1}_{T}} + \|F \|_{X^{m-1}_{T}}).
\end{align*}  
We can then plug this estimate into \eqref{normal3} to obtain that
  \begin{align*}
\| U^\eps\|_{X^m_{T}(\mathcal{O}_i)}
 & \leq  C[ \|U^\eps \|_{X^{m-1}_{T}}, \| \Theta^\eps \|_{C^{m-1}([0, T])} ] (  \| U^\eps
   \|_{L^\infty_{T}H^m_{co}} + \| \nabla_{y} U^\eps \|_{X^{m-1}_{T}(\mathcal{O}_{i})}  \\ 
   &\quad + \mu \| U^\eps \|_{X^m_{T}(\mathcal{O}_{i})} 
  + C_{\mu} \|U^\eps \|_{X^{m-1}_{T}} + \|F \|_{X^{m-1}_{T}}) + \|\overline s^\eps \|_{X^{m}_{T}}  +  \| \text{Curl}\m M \overline u^\eps \|_{X^{m-1}_{T}}.
\end{align*} 
To conclude, we can use again \eqref{calculus1} into Proposition  \eqref{calculus}  and choose
$\mu$ sufficiently small.
 Then, the estimate \eqref{tobedone} follows by finite covering of the boundary.

\end{proof}

\section{Proof of  Theorem \ref{main}}\label{limit}

We can now finish the proof of our main result.
By using the uniform estimates of the previous section, 
we shall first  prove that we can choose $\Xi$ large enough independent of the regularization
parameters so that the maximal existence time $T^\eps$ defined in \eqref{superT}
is such that $T^\eps \geq T_{*}$ for some fixed $T_{*}>0$ independent of $\eps$.

{\bf Step 1}: {\em Uniform existence time.}

We can first  link the parameters  $\eps$ and $n$ to get an overall estimate which is uniform with respect to both parameters.
Recalling  the definitions of $F(t, x)$ and $\tilde U^n$ introduced in \rfb{matrice2} and \rfb{sourceapprox}, we define 
$$c(n):=\|(\nu\cdot \nabla)\tilde U^n\|_{X^m_{T_L}(\Fscr(0))}, \qquad
\tilde \eps(n):=\frac{1}{ 1 + c(n)},  $$
which, for $\eps\leq \tilde \eps(n)$, implies that
\begin{equation}
\label{choixpourF}
\|F(t,x)\|_{X^m_{T_L}(\Fscr(0))}\leq \eps c(n) \leq 1.
\end{equation} 
Moreover, since $U_0^n\to U_0$ in $H^m(\Fscr(0))$, we have thereby for $n$ large enough,
$$\|U_0^n\|_{H^m(\Fscr(0))}\leq \frac{3}{2}\|U_0\|_{H^m(\Fscr(0))}.$$

Let us then define
$$
Y_{m}(T):=\|\Theta^\eps\|_{C^m[0, T]}+\|U^\eps\|_{X^m_{T}},
$$
which is the quantity that we want to control.
By using  Proposition \ref{propnorm} and Proposition \ref{cornormaluni},   Proposition \ref{curlestimate}, Proposition \ref{entroprop}, together with  \eqref{choixpourF}, we get that
$$ Y_{m}(T) \leq C[ Y_{m-1}(T)](C_{0} + T^{1 \over2} (1 + C[ \xi])),$$
where $C_{0}$ depends only $ \|U_0\|_{H^m(\Fscr(0))}$ and $l_{0},\, \omega_{0}$.

From a rough integration in time of the system \eqref{forcc}-\eqref{solideps}, 
we also get that
$$ Y_{m-1}(T) \leq C_{0} +  T (C[ \Xi] + 1), \quad \|U^\eps(t) -  U_{0}^n\|_{L^\infty} \leq T C[\Xi].$$
This allows to choose $\Xi$ sufficiently large and $T_{*}$ sufficiently small
 so that
 $$  C[ C_{0} + T_{*}(C[ \xi]+ 1)] ( C_{0} + T_{*}^{1\over 2} ( 1 + C[ \Xi])) \leq {\Xi \over 2},$$
 and 
 $ \pi U^\eps(t) \in \mathcal{V}_{1}, \forall t \leq \min(T_{*}, T^\eps)$ where 
we fix $ \mathcal{V}_{1}$ such that
$ \mathcal{C} \subset\mathcal{V}_{1} \subset \overline{\mathcal{V}}_{1} \subset \mathcal{V}.$

By a standard bootstrap argument, we then get that the solution exists on $[0, T_*]$
 and that the estimate
 \begin{equation}
 \label{uniformefin}
 Y_{m}(T) \leq \Xi, \quad \forall T \in [0, T_{*}]
 \end{equation}
  holds.

%
%

Based on the uniform estimates in Section \ref{uniform}, we are going to pass to the limit $\eps\to 0$ and $n\to \infty$ as choosen in the previous subsection  to  get  the proof Theorem \ref{main}.

{\bf Step 2}: {\em Passing to the limit.}

We set  $\eps(n):=\min\{\tilde\eps(n), \frac{1}{n}\}$ and we send $n\to \infty$.
Note that we have uniform estimate  \eqref{uniformefin}, therefor, we have that
$ U^{\eps(n)}$ is uniformly bounded in $C([0, T^*], H^{m})$ and
 $\partial_{t}U^{\eps(n)}$ is uniformly bounded in $C([0, T^*], H^{m-1})$.
 We thus obtain from the Ascoli Theorem that  up to a subsequence 
 \begin{equation*}\label{strong1}
U^{\eps(n)}\to U \quad \text{strongly in} \quad  C([0, T^*]; H^s(\Fscr(0))),
\end{equation*}
for any $0\leq s<m$.
From the uniform estimate in $ C([0, T_*], H^{m})$, we deduce further that
$ U \in L^\infty(0, T^*, H^m)$ and that $U \in C_{w}([0, T_{*}], H^m)$
(meaning continuous valued in $H^m$ equipped with the weak topology).

For the solid part, from the equation for $\Theta^{\eps(n)}$ in \eqref{solideps}, we deduce that
$$| \partial_{t}^m \Theta^{\eps(n)} (t) -    \partial_{t}^m \Theta^{\eps(n)} (s)| \lesssim  
 |t-s | C[\Xi] + \| \partial_{t}^{m-1}p^{\eps(n)}(t) - \partial_{t}^{m-1} p^{(\eps(n))}(s) \|_{L^2(\partial\mathcal{S}(0))}.$$  
 This yields from the trace Theorem
 \begin{align*}
& | \partial_{t}^m \Theta^{\eps(n)} (t) -    \partial_{t}^m \Theta^{\eps(n) }(s)|  \\ &\lesssim  
 |t-s | C[\Xi] + \| \partial_{t}^{m-1}p^{\eps(n)}(t) - \partial_{t}^{m-1} p^{\eps(n)}(s) \|_{H^1(\mathcal{F}(0))}^{1 \over 2 }  \| \partial_{t}^{m-1}p^{\eps(n)}(t) - \partial_{t}^{m-1} p^{\eps(n)}(s) \|_{L^2(\mathcal{F}(0))}^{1 \over 2 } \\
& \lesssim C[ \Xi] ( |t-s | C[\Xi]  + |t-s|^{1 \over 2}),
 \end{align*}
  where we have used in the final estimate that 
  $$  \| \partial_{t}^{m-1}p^{\eps(n)}(t) - \partial_{t}^{m-1} p^{\eps(n)}(s) \|_{L^2(\mathcal{F}(0))}^{1 \over 2 }  
  \leq |t-s|^{1 \over 2} C[ \Xi].$$
  From the Ascoli Theorem this yields that up to a subsequence
  $ \Theta^{\eps(n)}$ converges to $\Theta$ in $C^m([0, T_{*}])$.
  
  Note that from the second  part  of  System \eqref{solideps}, we obtain that $\partial_{t}^{m+1} \Upsilon^{\eps(n)}$
   is uniformly bounded so that $ \Upsilon^{\eps(n)}$ also converges up to subsequence
   towards $\Upsilon$ in $C^m([0, T_{*}]).$
   
   From these strong convergences, we clearly get that $(U, \Theta, \Upsilon)$ solves
    \eqref{first}, \eqref{solidbar}, that $\pi U \in \overline{\mathcal{V}}$ and that it is a $C^1([0, T] \times \mathcal{F}(0))$ solution
    by Sobolev embedding.
    
   From this regularity, we can easily get the uniqueness, by estimating $E_{0, tan}[ \Theta_{1}- \Theta_{2}, U_{1}- U_{2}]$
  for  two solutions.

To conclude it remains to show that  $U \in X^m_{T_*}$ 
 that is to say that  $\partial_{t}^k U$ is continuous in time with values in $H^{m-k}(\Fscr(0))$
  for $0 \leq k \leq m$.
We can follow the argument for the compressible Euler system in \cite{schochet1986compressible}.
Indeed, we already know that $U \in C_{w}([0, T_{*}], H^m)$ and  $(\Theta, \Upsilon) \in C^m([0, T_{*}])$
 so that thanks to the system \eqref{first}
 $ \partial_{t}^k U \in C_{w}([0, T_{*}], H^{m-k})$ for $0 \leq k \leq m$.
  To prove the continuity it thus suffices to prove that $\| \partial_{t}^k  U\|_{H^{m-k}}$ 
   is continuous and since the system  is reversible  it suffices to prove the right continuity
    of  $\| \partial_{t}^k  U\|_{H^{m-k}}$. 
    We shall prove this at the initial time. Since the uniqueness, is already established, 
    the same argument holds by considering  the Cauchy data taken at any time $s\in [0, T]$ instead of $0$.
     Passing to the limit in the estimate
     of  Proposition \ref{cornormaluni} and using that we have strong $H^m$ convergence
     at the initial time  we obtain
      $$ E_{m, tan}[ \Theta, U] (t) \leq  E_{m, tan}[ \Theta, U] (0) + C[\Xi]t.$$
      This yields the right continuity of   $E_{m, tan}[ \Theta, U] (t)$ at $0$
      and hence of $\|U(t)\|_{H^m_{co}}$ since $\Theta \in C^m([0, T_{*}])$ and $U \in C([0, T_{*}], H^{m-1})$. 
     In a similar way proceeding as in Proposition \ref{curlestimate}
      and Proposition \ref{entroprop} we also get that $ \|\partial_{t}^k\text{Curl } ( M \overline u^\eps) \|_{H^{m-1-k}}$
      and $ \|\partial_{t}^k \overline s^\eps \|_{H^{m-k}}$ are right continuous. 
      We then use the system \eqref{first} to deduce that $\|\partial_{t}^k U \|_{H^{m-k}}$ is continuous
       close to the boundary.

\appendix

\section{Useful estimates}\label{append}

We now present some important product estimates, which are frequently used in the energy estimates. In the following we still use the similar notation introduced in \rfb{defmatricesfix} i.e. $\Mscr[u](t,x):=\Mscr(t,x, u(t,x))$.

\begin{prop}\label{productestimate}
For $k  =0, 1$,  we have the following estimates:
\begin{enumerate} 
\item For $n\geq 2$ and $u$, $v\in X^{n+k}([0, T]; \Oscr)$, the product $u v\in X^{n+k}([0, T];\Oscr)$ and
\begin{equation}\label{proXm}
\|u\m v\|_{L^\infty_T(H^{n+k}_{co}(\Oscr))}\lesssim\|u\m v\|_{X_T^{n+k}(\Oscr)}\lesssim \|u\|_{X_T^{n}(\Oscr)} \|v\|_{X_T^{n+k}(\Oscr)}
+\|u\|_{X_T^{n+k}(\Oscr)}\|v\|_{X_T^{n}(\Oscr)}. 
\end{equation}
\item For $n\geq 2$, assume further that $\Mscr(t,x, u)\in C^{n+k}([0, T]\times \overline{O} \Oscr\times \Uscr)$ with $u(t,x)\in \mathcal{V}$, we have
\begin{equation}\label{smoothmatrix}
\|\Mscr[u](t,x))\|_{X^{n+k}_T(\Oscr)}\leq
\left\{\begin{aligned}
&C\left[\|\Mscr(t,x,u)\|_{C^n}\right](1+\|u\|_{X_T^n(\Oscr)}^n) \, &k=0,\\
& C[\|\Mscr(t,x,u)\|_{C^{n+1}}](1+\|u\|_{X^n_T(\Oscr)}^n)(1+\|u\|_{X_T^{n+1}(\Oscr)}) \,&k=1.
\end{aligned}
\right. 
\end{equation}
\item With the above assumptions, for $m\geq 3$ and $|\beta|\leq m+k$, we have the commutator estimates:
\begin{equation}\label{comm}
\|[\Mscr[u](t,x)D, Z^\beta]v\|_{L^2(\Oscr)}\leq	
\left\{\begin{aligned}
&C[\|\Mscr(t,x,u)\|_{C^m}](1+\|u\|_{X^{m-1}_T(\Oscr)}^{m-1})(1+\|u\|_{X^m_T(\Oscr)})\|v\|_{X^m_T(\Oscr)} \,&k=0,\\
&C[\|\Mscr(t,x,u)\|_{C^{m+1}}](1+\|u\|_{X^m_T(\Oscr)}^m)\\
&\quad \times\left((1+\|u\|_{X^m_T(\Oscr)})\|v\|_{X^{m+1}_T(\Oscr)}+(1+\|u\|_{X^{m+1}_T(\Oscr)})\|v\|_{X^{m}_T(\Oscr)} \right)\, &k=1.
\end{aligned}\right.
\end{equation}
\end{enumerate}
The above estimate also  holds  for $Z^\beta$ replaced by  $D^\beta$.\\
The norms of the matrices are defined as $\| \mathcal{M}\|_{\mathcal{C}^k}= \sup_{[0, T] \times \overline{O} \times \overline{\mathcal{V}}} | \partial^\alpha \mathcal{M}(t,x,u)|.$
\end{prop}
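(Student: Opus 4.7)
The plan is to reduce all three estimates to Moser-type product inequalities combined with Gagliardo-Nirenberg interpolation applied within the conormal setting. Both the $Z^\beta$ and $D^\beta$ versions are handled by the same Leibniz expansions after a finite covering argument, since on each chart near the boundary the conormal fields together with $\partial_t$ form a graded frame of the same total order as $D^\beta$. For (\ref{proXm}) I would expand $D^\beta(uv)$ by Leibniz and apply Hölder's inequality, bounding each term by a Gagliardo-Nirenberg interpolation of the form
\begin{equation*}
\|D^\alpha u\|_{L^p}\,\|D^{\beta-\alpha}v\|_{L^q}\lesssim \|u\|_{X^n_T}^{1-\theta}\|u\|_{X^{n+k}_T}^{\theta}\,\|v\|_{X^n_T}^{\theta}\|v\|_{X^{n+k}_T}^{1-\theta},
\end{equation*}
with $\theta=|\alpha|/(n+k)$ and Hölder exponents chosen by scaling; Young's inequality then collapses this into the claimed bilinear bound. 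The requirement $n\geq 2$ enters here via the Sobolev embedding $H^2\hookrightarrow L^\infty$ at the endpoints where all derivatives fall on a single factor. The first inequality $\|uv\|_{L^\infty_TH^{n+k}_{co}}\lesssim \|uv\|_{X^{n+k}_T}$ is immediate from Remark \ref{understand} and the definition of the $X^{n+k}_T$ norm.

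For (\ref{smoothmatrix}), I would apply Faà di Bruno's formula to expand $D^\beta(\mathcal{M}(t,x,u(t,x)))$ as a finite sum of terms of the shape $(\partial^\gamma_{t,x,u}\mathcal{M})[u]\prod_{j=1}^rD^{\alpha_j}u$ with $\sum_j|\alpha_j|+|\gamma_{t,x}|=|\beta|$ and $r\leq |\beta|$. Because $\pi U$ remains in the compact subset $\overline{\mathcal{V}}\subset\mathcal{U}$, the prefactor is uniformly bounded by $\|\mathcal{M}\|_{C^{|\beta|}}$. Iterating (\ref{proXm}) on the factors $D^{\alpha_j}u$, placing the top-order $n+k$ on at most one factor and keeping the others at order $\leq n$ so they are controlled in $L^\infty$ via Sobolev, produces the polynomial-in-$\|u\|_{X^n}$ bound, with the asymmetry for $k=1$ reflecting that precisely one factor may require the $X^{n+1}$ norm.

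For the commutator estimate (\ref{comm}) I would write
\begin{equation*}
[\mathcal{M}[u]D,Z^\beta]v = [\mathcal{M}[u],Z^\beta]Dv + \mathcal{M}[u]\,[D,Z^\beta]v,
\end{equation*}
and expand the first commutator by Leibniz as $\sum_{0<\alpha\leq\beta}\binom{\beta}{\alpha}(Z^\alpha\mathcal{M}[u])(Z^{\beta-\alpha}Dv)$ with $|\alpha|\geq 1$. Applying (\ref{proXm}) with $n=m-1\geq 2$ to distribute norms and (\ref{smoothmatrix}) to bound $\|\mathcal{M}[u]\|_{X^m_T}$ (resp.\ $X^{m+1}_T$) yields the stated inequality. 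The second commutator $[D,Z^\beta]v$ is still of order $|\beta|$ but replaces one tangential field by a bounded bracket $[D,Z_i]$ having smooth coefficients, so it is absorbed into the lower-order contribution. The main obstacle will be the $k=1$ bookkeeping: the final bound is asymmetric in $\|u\|_{X^{m+1}}$ versus $\|v\|_{X^{m+1}}$, and one must arrange the Leibniz split so that the top order $m+1$ is never assigned simultaneously to $\mathcal{M}[u]$ and to $Dv$. Running (\ref{smoothmatrix}) with $k=1$ exactly once on the highest-order side and with $k=0$ elsewhere achieves this; verifying that every combinatorial sub-case respects the claimed asymmetric structure is the only step that requires genuine care.
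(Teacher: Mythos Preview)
Your proposal is correct and follows essentially the same route as the paper. The only cosmetic difference is in part (1): you make the Gagliardo--Nirenberg interpolation step explicit, whereas the paper quotes the ready-made multiplicative Sobolev inequality $\|uv\|_{H^{\sigma-k-l}}\lesssim\|u\|_{H^{\sigma-k}}\|v\|_{H^{\sigma-l}}$ for $\sigma>3/2$ directly (applied at each fixed time after separating out the $\partial_t^{\beta_0}$ part). Parts (2) and (3) --- Fa\`a di Bruno for the composition and the Leibniz decomposition of the commutator with the $[D,Z^\beta]$ remainder handled as a same-order operator with smooth coefficients --- are identical to the paper's argument.
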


\begin{proof}

We first prove \rfb{proXm} for $k=0$. For $u$, $v\in X^n([0, T];\Oscr)$, we have in particular,
\begin{align*}
&\|u\m v\|_{L^\infty_T(H^n_{co}(\Oscr))}\leq \sup_{t\in [0, T]}\sum_{|\beta+\gamma|\leq n}\|Z^\beta u\cdot Z^\gamma v\|_{L^2(\Oscr)}\\
&\lesssim \sup_{t\in [0, T]}\sum_{|\beta+\gamma|\leq n}\sum_{0\leq \beta_0\leq |\beta|, 0\leq \gamma_0\leq |\gamma|}\|\partial_t^{\beta_0}Z^{|\beta|-\beta_0}u\cdot \partial_t^{\gamma_0}Z^{|\gamma|-\gamma_0}v\|_{L^2(\Oscr)}\\
&\lesssim\sup_{t\in [0, T]} \sum_{|\beta+\gamma|\leq n}\|\partial_t^{\beta_0}Z^{|\beta|-\beta_0}u\|_{H^{n-|\beta|}(\Oscr)}\|\partial_t^{\gamma_0}Z^{|\gamma|-\gamma_0}v\|_{H^{|\beta|}(\Oscr)}\\
&\lesssim\sup_{t\in [0, T]} \sum_{|\beta+\gamma|\leq n}\|\partial_t^{\beta_0}u\|_{H^{n-\beta_0}(\Oscr)}\|\partial_t^{\gamma_0}v\|_{H^{n-\gamma_0}(\Oscr)}\\
& \lesssim \|u\|_{X^n_T(\Oscr)}\|v\|_{X_T^n(\Oscr)},
\end{align*}
where we used the classical  inequality,
$$\|u\m v\|_{H^{\sigma-k-l}(\Oscr)}\lesssim \|u\|_{H^{\sigma-k}(\Oscr)}\|v\|_{H^{\sigma-l}(\Oscr)}\qquad \forall \m  0\leq k+l\leq \sigma \text{ and } \sigma>\frac{3}{2}$$
with $\sigma=n$, $k=|\beta|$ and $l=n-|\beta|$. 
Similarly, we have 
$$\|u\m v\|_{X_T^n(\Oscr)}\lesssim \|u\|_{X_T^n(\Oscr)}\|v\|_{X_T^n(\Oscr)}. $$
When  $k =1 $ it suffices to use the above inequality and we apply one more derivative $Z$ (for time or space) to the product. Hence we obtain the inequality \rfb{proXm}. 


For \rfb{smoothmatrix}, for $k=0$ we see that
\begin{align*}
&\|\Mscr[u](t,x)\|_{X^n_T(\Oscr)}=\sup_{t\in [0, T]}\sum_{|\alpha|+p\leq n,  p=\sum_i|\beta_i|}\|(Z^\alpha\nabla_u^p\Mscr)Z^{\beta_1}u\m Z^{\beta_2}u\cdots Z^{\beta_p}u\|_{L^2(\Oscr)}\\
&\leq C\left[\|\Mscr(t,x,u)\|_{C^n}\right]\sup_{t\in[0, T]}\sum_{0\leq p\leq n}\|Z^{\beta_1}u\m Z^{\beta_2}u\cdots Z^{\beta_p}u\|_{L^2(\Oscr)}
\leq C\left[\|\Mscr(t,x,u)\|_{C^n}\right](1+\|u\|_{X_T^n(\Oscr)}^n),
\end{align*}
where we used the product estimate in \rfb{proXm}. Similarly, for $k=1$ we have
\begin{align*}
\|\Mscr[u](t,x)\|_{X^{n+1}_T(\Oscr)}&=\sup_{t\in [0, T]}\sum_{|\alpha|+1+p\leq n+1,  p=\sum_i|\beta_i|}\|(Z^{\alpha+1}\nabla_u^p\Mscr)Z^{\beta_1}u\m Z^{\beta_2}u\cdots Z^{\beta_p}u\|_{L^2(\Oscr)}\\
&\quad\quad  +\sup_{t\in [0, T]}\sum_{|\alpha|+p+1\leq n+1,  p=\sum_i|\beta_i|}\|(Z^{\alpha}\nabla_u^{p+1}\Mscr)Z(Z^{\beta_1}u\m Z^{\beta_2}u\cdots Z^{\beta_p}u)\|_{L^2(\Oscr)}\\
&\leq C[\|\Mscr(t,x,u)\|_{C^{n+1}}](\|u\|_{X^n_T(\Oscr)}^n+\|u\|_{X^{n}_T(\Oscr)}^{n-1}\|u\|_{X_T^{n+1}(\Oscr)})\\
&\leq C[\|\Mscr(t,x,u)\|_{C^{n+1}}](1+\|u\|_{X^n_T(\Oscr)}^n)(1+\|u\|_{X_T^{n+1}(\Oscr)}).
\end{align*}

Now we estimate the commutators in \rfb{comm}. From the definition of conormal vector $Z^\beta$ in Subsection \ref{notation}, we note that $[\partial_t, Z^\beta]=0$. Then for $m\geq 3$ we derive that
\begin{align*}
\|[\Mscr[u](t,x)\partial_t, Z^\beta]v\|_{L^2(\Oscr)}&=
\|\sum_{|\gamma|\neq 0,|\sigma+\gamma|\leq m+k} Z^\gamma \Mscr[u](t,x)\cdot Z^\sigma \partial_t v\|_{L^2(\Oscr)}\\
&\lesssim \sum_{|\gamma'+\sigma|\leq m+k-1}\|Z^{\gamma'}(Z \Mscr[u](t,x))\cdot Z^\sigma \partial_t v\|_{L^2(\Oscr)}.
\end{align*}
For $k=0$, we see that
\begin{align*}
\|[\Mscr[u](t,x)\partial_t, Z^\beta]v\|_{L^2(\Oscr)}&\lesssim \|Z\Mscr[u](t,x)\cdot \partial_t v\|_{X^{m-1}_T(\Oscr)}\lesssim \|\Mscr[u](t,x)\|_{X^{m}_T(\Oscr)}\|v\|_{X^{m}_T(\Oscr)}\\
&\lesssim C[\|\Mscr(t,x,u)\|_{C^m}](1+\|u\|_{X^{m-1}_T(\Oscr)}^{m-1})(1+\|u\|_{X^m_T(\Oscr)})\|v\|_{X^m_T(\Oscr)},
\end{align*}
where we used the inequality in \rfb{proXm} for $k=0$ and  \rfb{smoothmatrix} for $k=1$. In the case of $k=1$, we derive that
\begin{align*}
\|[\Mscr[u](t,x)\partial_t, Z^\beta]v\|_{L^2(\Oscr)}&\lesssim \|Z\Mscr[u](t,x)\cdot \partial_t v\|_{X^{m}_T(\Oscr)}\\
&\lesssim \|\Mscr[u](t,x)\|_{X^m_T(\Oscr)}\|v\|_{X^{m+1}_T(\Oscr)}+\|\Mscr[u](t,x)\|_{X^{m+1}_T(\Oscr)}\|v\|_{X^m_T(\Oscr)}\\
&\lesssim C[\|\Mscr(t,x,u)\|_{C^m}](1+\|u\|_{X^{m-1}_T(\Oscr)}^{m-1})(1+\|u\|_{X^m_T(\Oscr)})\|v\|_{X^{m+1}_T(\Oscr)}\\
&\quad +C[\|\Mscr(t,x,u)\|_{C^{m+1}}](1+\|u\|_{X^m_T(\Oscr)}^m)(1+\|u\|_{X^{m+1}_T(\Oscr)})\|v\|_{X^{m}_T(\Oscr)}.
\end{align*}
In the above, we used \rfb{proXm} for $k=1$ and \rfb{smoothmatrix} for $k=1$ as well. 

Similarly, we realize that $[\nabla, Z^\beta]$ is of order $m+k$ if $|\beta|=m+k$ with $k=0, 1$. Thus,
\begin{equation*}
\|[\Mscr[u](t,x)\cdot \nabla, Z^\beta]v|_{L^2(\Oscr)}
\lesssim 
\|\Mscr[u](t,x)\cdot [\nabla,\m Z^\beta] v\|_{L^2(\Oscr)}+\sum_{|\gamma|\neq 0, |\gamma+\sigma|\leq m}\|(Z^\gamma \Mscr[u](t,x)\cdot Z^\sigma\nabla) v\|_{L^2(\Oscr)}.
\end{equation*}
This can be estimated in a similar way as above. Putting together the above estimates, we obtain \rfb{comm}. 
\end{proof}

We shall also need a refined version of the previous  commutator estimates when  we only assume that
$ | \beta | \leq k$ with $k \geq 2$.

\begin{prop}

\label{calculus}
Consider $\mathcal{O}_{i}$ a small coordinate patch near the boundary, 
\begin{itemize}
\item    For every $\alpha, \, \beta, \,\gamma$, $|\gamma \geq 1$,  for every $\mu >0$, there exists
$C_{\mu}>0$ such that
\begin{equation}
\label{calculus1} \| \partial_{t}^\alpha \partial_{z}^\beta \partial_{y}^\gamma u \|_{X^0_{T}(\mathcal{O}_{i})}
 \leq \mu \| \partial_{t}^\alpha \partial_{z}^{\beta + \gamma} u \|_{X^0_{T}(\mathcal{O}_{i})}
  + C_{\mu}\left( \|u\|_{L^\infty_{T} H^{| \alpha | + | \beta | + |\gamma|}_{co}} +  \| u\|_{X^{ | \alpha |+ |\beta | + | \gamma |-1}_{T}} \right).
  \end{equation}
 \item For every $n \geq 2$, every $\beta, \, |\beta|= n$ and $\mu >0$, there exists $C_{\mu}$
  such that
  \begin{equation} 
  \label{calculus2}
  \|D^\beta[ \mathcal{M}[u], \partial_{i}] u\|_{X^0_{T}(\mathcal{O}_{i})}
   \leq \mu \| u \|_{X^{n+1}_{T}(\mathcal{O}_{i})} + C_{\mu} C[ \|u\|_{X^n_{T}(\mathcal{O}_{i})}, \|\mathcal{M}\|_{C^n}]  \| u \|_{X^n_{T}(\mathcal{O}_{i})}, \quad i=0, \, 1, \, 2, \, 3.
 \end{equation}
 \end{itemize}

\end{prop}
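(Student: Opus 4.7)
\emph{Plan for \eqref{calculus1}.} This is a Gagliardo--Nirenberg-type interpolation whose nontriviality relies on $|\gamma|\geq 1$. I would work in the local coordinates $(t,y_1,y_2,z)$ on $\mathcal{O}_i$, cut $u$ off to have support inside $\mathcal{O}_i$, and extend it to a flat half-space in these coordinates (or, via even reflection in $z$, to the full space). By Plancherel the estimate then reduces to the pointwise Young inequality
\[
|\xi_z|^{2|\beta|}|\xi_y|^{2|\gamma|} \leq \mu^2 |\xi_z|^{2N} + C_\mu^2 |\xi_y|^{2N},
\qquad N := |\alpha|+|\beta|+|\gamma|,
\]
which follows from $ab\leq \tfrac{1}{p}(\lambda a)^p + \tfrac{1}{q}(\lambda^{-1}b)^q$ with conjugate exponents $p=N/|\beta|$, $q=N/|\gamma|$ and $\lambda$ chosen so as to place $\mu^2$ in front of the first summand. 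The hypothesis $|\gamma|\geq 1$ is exactly what allows this trade. Integrating against $|\xi_t|^{2|\alpha|}|\hat u|^2$ yields $\mu\|\partial_t^\alpha\partial_z^{\beta+\gamma} u\|_{X^0_T(\mathcal{O}_i)}$ for the normal piece and a quantity controlled by $\|u\|_{L^\infty_T H^N_{co}}$ for the tangential piece, since $\partial_y^{|\beta|+|\gamma|}$ is a conormal operator of order $\leq N$. Commutators with the cut-off and with the smooth change of coordinates $\varphi_i$ produce strictly lower-order contributions, absorbed into $\|u\|_{X^{N-1}_T}$.

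\emph{Plan for \eqref{calculus2}.} Start from the explicit formula
\[
[\mathcal{M}[u],\partial_i]\,u = -(\partial_i \mathcal{M}[u])\,u
= -\bigl((\partial_i \mathcal{M})(t,x,u) + (\nabla_u \mathcal{M})(t,x,u)\cdot \partial_i u\bigr)\,u,
\]
apply $D^\beta$ with $|\beta|=n$ via Leibniz and Fa\`a di Bruno, and sort the resulting sum by the maximal total order of $u$-derivative appearing in each term. Terms of order at most $n$ on $u$ are bounded by $C[\|u\|_{X^n_T},\|\mathcal{M}\|_{C^n}]\,\|u\|_{X^n_T}$, using the product estimate \eqref{proXm}, the composition estimate \eqref{smoothmatrix} with $k=0$, and the Sobolev embedding $X^n_T(\mathcal{O}_i)\hookrightarrow L^\infty_{t,x}$ which is available since $n\geq 2$. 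The only genuinely $(n+1)$-st order term is
\[
\nabla_u \mathcal{M}(t,x,u)\cdot (D^\beta\partial_i u)\cdot u,
\]
whose $X^0_T$-norm is bounded by $\|\nabla_u \mathcal{M}\|_{L^\infty}\,\|u\|_{L^\infty}\,\|D^\beta\partial_i u\|_{X^0_T(\mathcal{O}_i)}$. To produce the announced $\mu$-prefactor I apply part \eqref{calculus1} to the derivative $D^\beta\partial_i u$ of total order $N=n+1$: its pure-normal component is bounded by $\mu\|u\|_{X^{n+1}_T}$, and the conormal plus lower-order remainder is absorbed into $C_\mu C[\|u\|_{X^n_T}]\,\|u\|_{X^n_T}$.

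The main obstacle is the careful derivative accounting in \eqref{calculus2}: one must verify, using the extra undifferentiated factor of $u$ sitting outside of $\partial_i\mathcal{M}[u]$, that the Fa\`a di Bruno expansion produces only one term carrying $(n+1)$ derivatives on $u$, and that all the remaining terms land in $C[\|u\|_{X^n_T},\|\mathcal{M}\|_{C^n}]\,\|u\|_{X^n_T}$ without ever requiring $\|\mathcal{M}\|_{C^{n+1}}$. Once this is arranged, the $\mu$-small prefactor in front of $\|u\|_{X^{n+1}_T}$ is delivered directly by part (i), which transfers the tangential component of the top-order derivative into a pure-normal piece bearing the small weight $\mu$, with the tangential remainder absorbed by the $\|u\|_{X^n_T}$ term on the right.
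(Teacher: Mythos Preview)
Your plan for \eqref{calculus1} is essentially the paper's own: extend, go to Fourier, and split. The paper phrases it as a frequency-cone decomposition $|(\xi_1,\xi_2)|\leq \mu|\xi_3|$ versus its complement, which is equivalent to your Young inequality. One slip: in the displayed Young inequality the homogeneity forces $N=|\beta|+|\gamma|$, not $|\alpha|+|\beta|+|\gamma|$; the time frequency factor $|\xi_t|^{2|\alpha|}$ is carried outside, as you say.

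For \eqref{calculus2} there is a genuine gap. The intended object is the operator commutator $[D^\beta,\mathcal{M}[u]]\,\partial_i u$ (equivalently $[\mathcal{M}[u]\partial_i,D^\beta]u$), not $D^\beta$ applied to $-(\partial_i\mathcal{M}[u])u$; this is visible both from where the estimate is used in Proposition~\ref{propnorm} and from the paper's own proof, and it is also forced by the statement itself, since your reading would require $\|\mathcal{M}\|_{C^{n+1}}$ (from $D^\beta\partial_{x_i}\mathcal M$), not $\|\mathcal{M}\|_{C^n}$. Under the correct reading the Leibniz expansion is $\sum_{|\beta_1|\geq 1}D^{\beta_1}\mathcal{M}[u]\cdot D^{\beta_2}\partial_i u$ with $|\beta_2|\leq n-1$, so \emph{no} $(n+1)$-st order derivative of $u$ ever appears. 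Even granting your reading, your proposed use of \eqref{calculus1} cannot produce a $\mu$-small coefficient on $\|D^\beta\partial_i u\|_{X^0_T}$: estimate \eqref{calculus1} bounds a mixed derivative by $\mu$ times a pure-normal one plus $C_\mu$ times the conormal norm \emph{of the same total order} $n+1$; it never yields $\mu\|u\|_{X^{n+1}_T}$ plus something controlled by $\|u\|_{X^n_T}$, and when $D^\beta\partial_i$ contains no $\partial_y$ it says nothing at all. The paper's route is different: for $n\geq 3$ invoke \eqref{comm} with $k=0$, which already gives a bound by $C[\|u\|_{X^n_T},\|\mathcal{M}\|_{C^n}]\|u\|_{X^n_T}$ with no $\mu$-term needed; only the borderline case $n=2$ requires the small parameter, and there it comes from the Gagliardo--Nirenberg interpolation $\|Du\|_{L^\infty_{t,x}}\lesssim \|Du\|_{L^\infty_T H^2}^{1/2}\|Du\|_{L^\infty_T H^1}^{1/2}$ followed by Young's inequality, not from \eqref{calculus1}.
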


\begin{proof}

For \eqref{calculus1}, we use the usual extension operator to extend $\partial_{t}^\alpha u$ as a
 function on $H^{|\beta| + | \gamma|} (\mathbb{R}^3)$.
we then use Fourier analysis and  divide in the frequency space  into $|(\xi_{1}, \xi_{2})| \leq \mu | \xi_{3}|$ and the complementary.

For  \eqref{calculus2},  if $n \geq 3$, we can use \eqref{comm}, we thus only have to deal with $|\beta|= n=2$.
We write
$$  \|D^\beta[ \mathcal{M}[u], \partial_{i}] u\|_{X^0_{T}(\mathcal{O}_{i})}
 \leq \|D^\beta ( \mathcal{M}[u])\|_{X^0_{T}} \| \partial_{i} u\|_{L^\infty_{t,x}} + \| \mathcal{M}\|_{C^1}
  \| D u \|_{L^\infty_{t,x}} \| D^2 u \|_{L^\infty_{T} L^2(\mathcal{O}_{i})}.$$
  We can thus use \eqref{smoothmatrix} and the Sobolev-Gagliardo-Nirenberg  inequality, 
  $$  \| D u\|_{L^\infty_{t,x}} \lesssim  \| Du \|_{L^\infty_{T}H^2(\mathcal{O}_{i})}^{ 1 \over 2}
    \|Du \|_{L^\infty_{T}H^1(\mathcal{O}_{i})}^{ 1 \over 2},$$
    together with the Young inequality.

\end{proof}

\begin{rmk}
From the same type of  arguments as above, we also have that when $|\beta|= k \leq 2$, 
\begin{equation}
\label{comm=2}
  \|D^\beta[ \mathcal{M}[u], \partial_{i}] \omega \|_{X^0_{T}}
   \leq  C[ \| \mathcal{M}\|_{C^2},   \|u \|_{X^3_{T}}] \|\omega\|_{X^2_{T}}.
   \end{equation}
Indeed, 
we just write 
 \begin{align*}
  \|D^\beta[ \mathcal{M}[u], \partial_{i}] \omega \|_{X^0_{T}(\mathcal{O}_{i})}
& \leq  \| \mathcal{M}\|_{C^1}\|D^\beta u \|_{L^\infty_{T}L^4} \| \partial_{i} \omega \|_{L^\infty_{T}L^4}
 +   \| \mathcal{M}\|_{C^2} \|Du \|_{L^\infty_{t,x}}^2 \| \partial_{i} \omega \|_{L^\infty_{T}L^2}
   \\ 
   &\quad +\| \mathcal{M}\|_{C^1}
  \| D u \|_{L^\infty_{T} L^\infty}  \|  D\partial_{i} \omega  \|_{L^\infty_{T} L^2} .
  \end{align*}
  and the result follows by using the Sobolev embeddings $L^4( \mathcal{O}) \subset H^1$, 
   $L^\infty(\mathcal{O}) \subset H^2$.
\end{rmk}


Next we provide here more details about the approximation of the initial data $U_0\in H^m(\Fscr(0))$ at the beginning of Section \ref{approx}. 

\begin{prop}\label{condapproxim}
Assume that the initial data $(U_0, l_0,\omega_0)\in H^m(\Fscr(0))\times (\rline^3)^2$ satisfy the compatibility condition \rfb{compat} of order $m-1$, then there exists $U_0^n\in H^{m+2}(\Fscr(0))$  such that $U_0^n\to U_0$ in $H^m(\Fscr(0))$ and in particular $(U_0^n, l_0,\omega_0)$ satisfy \rfb{compat} of order $m$. 
\end{prop}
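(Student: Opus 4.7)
The plan is to obtain $U_0^n$ as a standard smooth approximation of $U_0$ plus a boundary-layer correction whose sole role is to restore the compatibility conditions up to order $m$. First, using a bounded extension operator $E\colon H^m(\mathcal{F}(0))\to H^m(\mathbb{R}^3)$ and a standard mollifier $\rho_{1/n}$, I would set $V_n := (\rho_{1/n} * EU_0)|_{\mathcal{F}(0)} \in C^\infty(\overline{\mathcal{F}(0)})$, so that $V_n \to U_0$ in $H^m$. Since mollification generally destroys compatibility at every order, $V_n$ alone will not do.

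I would next analyze the defects $d_n^k := B_k(V_n, l_0, \omega_0)$ for $0 \leq k \leq m$, where $B_k$ denotes the difference of the two sides of the $k$-th condition in \rfb{compat}, expressed entirely through spatial derivatives of $V_n$ (and $l_0,\omega_0$) by using the recursion defining $\mathcal{I}^k$. For $k\leq m-1$, the hypothesis $B_k(U_0,l_0,\omega_0)=0$ together with continuity of $B_k\colon H^m(\mathcal{F}(0)) \to H^{m-k-1/2}(\partial\mathcal{F}(0))$ gives $d_n^k\to 0$ in $H^{m-k-1/2}(\partial\mathcal{F}(0))$. For $k=m$, the defect $d_n^m$ is only uniformly bounded in $H^{-1/2}(\partial\mathcal{F}(0))$; its norms in stronger spaces may blow up at a polynomial rate in $n$.

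Working in a tubular neighbourhood of $\partial\mathcal{F}(0)$ with local coordinates $(y,z)$, $z>0$, I would build the correction under the ansatz
\begin{equation*}
W_n(y,z) \;=\; \sum_{k=0}^{m} \chi\!\left(\tfrac{z}{\delta_n^{(k)}}\right)\frac{z^k}{k!}\,g_n^k(y),
\end{equation*}
where $\chi\in C_c^\infty(\mathbb{R})$ with $\chi(0)=1$; by construction $\partial_z^j W_n|_{z=0}=g_n^j$ and traces of different orders decouple. The $g_n^k$ are determined inductively by enforcing $B_k(V_n+W_n, l_0,\omega_0)=0$: at order $k$, the principal part of the equation (coming from the trace of $\partial_t^k\overline u\cdot n_0$ in the momentum equation) is a non-degenerate algebraic condition on $g_n^k$, allowing it to be solved in terms of $d_n^k$, the previously built $g_n^j$ for $j<k$, and smooth data. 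For $0\leq k\leq m-1$ one takes $\delta_n^{(k)}=1$, and the standard trace estimate gives a contribution $\lesssim \|g_n^k\|_{H^{m-k-1/2}(\partial)}\to 0$ to $\|W_n\|_{H^m}$. For $k=m$ one uses the scaling identity
\begin{equation*}
\big\|\chi(z/\delta)\tfrac{z^m}{m!}g\big\|_{H^m(\mathcal{F}(0))} \;\lesssim\; \delta^{1/2}\,\|g\|_{L^2(\partial\mathcal{F}(0))},
\end{equation*}
and chooses $\delta_n^{(m)}\to 0$ fast enough (as a negative power of the mollification scale $1/n$) to absorb any polynomial growth of $\|g_n^m\|_{L^2(\partial)}$. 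All $g_n^k$ and $\chi$ being smooth, $U_0^n:=V_n+W_n\in C^\infty(\overline{\mathcal{F}(0)})\subset H^{m+2}(\mathcal{F}(0))$, $U_0^n\to U_0$ in $H^m$, and compatibility of order $m$ holds by construction.

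The main obstacle is the treatment of $d_n^m$: it neither vanishes nor lies in any norm for which the usual trace-lift is bounded into $H^m$. The boundary-layer scaling is therefore indispensable — concentrating the top-order correction in a layer of width $\delta_n^{(m)}\to 0$ yields the gain of $(\delta_n^{(m)})^{1/2}$ in the $H^m$-norm that compensates the unavoidable growth of $\|g_n^m\|_{L^2(\partial)}$ in $n$, while the algebraic decoupling of the ansatz ensures that this top-order correction leaves the lower-order compatibility already achieved intact.
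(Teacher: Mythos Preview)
Your approach is essentially the Rauch--Massey/Schochet construction that the paper invokes: mollify, then add a boundary correction with prescribed normal traces to restore compatibility. The paper simply cites \cite{rauch1974differentiability,schochet1986compressible} and isolates the one new ingredient---that the solid equations inject \emph{nonlocal} terms (integrals of $\partial_t^{k-1}\overline p(0)$ over $\partial\Sscr(0)$) into the right-hand side of \rfb{compat}---observing that these are still bounded operators on the relevant trace spaces and therefore fit into the lower-order remainder $F_k$. You do not mention this point; it is the only feature distinguishing the present setting from Schochet's, and your inductive solvability claim for $g_n^k$ implicitly relies on it.

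One technical correction: your displayed scaling estimate
\[
\bigl\|\chi(z/\delta)\tfrac{z^m}{m!}g\bigr\|_{H^m(\Fscr(0))}\lesssim \delta^{1/2}\|g\|_{L^2(\partial\Fscr(0))}
\]
is false as stated, because tangential derivatives in the $H^m$ norm fall on $g$. The correct bound is $\sum_{s=0}^m \delta^{s+1/2}\|g\|_{H^s(\partial\Fscr(0))}$. Since $V_n$ is mollified at scale $1/n$, one has $\|g_n^m\|_{H^s(\partial)}\lesssim n^{s+1/2}$, so the corrected sum is $\lesssim \sum_s (\delta_n n)^{s+1/2}$, which still tends to zero provided $\delta_n=o(1/n)$. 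Your conclusion therefore survives, but the argument as written needs this repair. The paper sidesteps this by starting from $g^n\in H^{2m+2}$ rather than $C^\infty$ and deferring the analogous estimate to \cite{rauch1974differentiability}.
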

\begin{proof}
For  linear hyperbolic systems with non-characteristic boundary, 
Rauch and Massey constructed in \cite[Lemma 3.3]{rauch1974differentiability} an approximate sequence $U_0^n\in H^{m+2}(\Fscr(0))$ such that  the convergence $U_0^n\to U_0$ happens in $H^m(\Fscr(0))$ and  $U_0^n$ verify  the compatibility conditions up to order $m$. Then in \cite{schochet1986compressible} Schochet extended this result to the compressible Euler equations explaining that in this case the singularity of the boundary matrix is actually not an issue. Here we aim to explain how to apply this to the fluid-solid coupled system \rfb{first}--\rfb{solidbar}. 

We focus on the issue of compatibility conditions close to the solid boundary.
We follow the idea in \cite{rauch1974differentiability}. Using \eqref{compatibegin}, 
the compatibility conditions \eqref{compat}
 at the boundary $\partial\Fscr(0)$ can be rewritten as 
\begin{equation*}
 G A_n^k  \gamma \partial_n^k U_0=   F_{k}((C_{i, k}\gamma \partial_n^i U_0)_{0 \leq i \leq k-1}, l_{0}, \omega_{0}) \qquad \forall \m 0\leq k\leq m,
\end{equation*}
where $F_{k}$ is a scalar nonlinear function of its arguments, $\gamma$ stands for the trace on the boundary, 
$C_{i, k}$ are operator of order $k-i$ only involving the tangential derivatives.
 The boundary matrix  $A_n$ was also  introduced above \rfb{compatibegin}.
The main contribution of the dynamics of the solid compared to the situation in \cite{schochet1986compressible} is that the operators $C_{i,k}$ involve both  a local part acting pointwise
 on the trace of  $ \partial_n^i U_0$ and a nonlocal part involving integrals
 on the boundary  that show up in \eqref{compatibegin}. Nevertheless, since they are still bounded
 operators on $H^{k-i}(\partial \mathcal{S}(0)$, we can still use the same arguments.
  As in the proof of \cite[Lemme 3.3]{rauch1974differentiability}, we first find $g^n\in H^{2m+2}(\Fscr(0))$ such that $g^n\to  U_0$ in $H^m(\Fscr(0))$. Then we get  the desired sequence $ U^n_0$ under the form 
$U_0^n=g^n-h^n$ with $h^n\in H^{m+2}(\Fscr(0))$, $h^n\to 0$ in $H^m(\Fscr(0))$ and 
such that
\begin{equation}\label{recom}
G  A_n^k\partial_n^k \gamma h_{n}= G  A_n^k\partial_n^k \gamma g_{n} - F_{k}((C_{i, k}\gamma \partial_n^i (g_{n}- h_{n})_{0 \leq i \leq k-1}, l_{0}, \omega_{0}) \qquad  0\leq k\leq m.
\end{equation}
As observed in  
 \cite{schochet1986compressible}, $\operatorname{Rang}(G  A_n^k)=\operatorname{Rang}(G)=1$. Therefore, $\partial_n^k h^n$ can be solved from \rfb{recom}. The remaining part of the proof 
  of the construction of $h_{n}$ then also follows  from \cite[Lemma 3.3]{rauch1974differentiability}
 since $F_{k}$ involves only terms with lower order normal derivatives.
\end{proof}

\section*{Acknowledgements}
F. Rousset is supported by the BOURGEONS project, grant ANR-23-CE40-0014-01 of the French National Research Agency (ANR). P. Su is partially supported by PEPS ``Jeunes Chercheuses et Jeunes Chercheurs
de l'Insmi" from CNRS and the Sophie Germain program of the Fondation Math\'ematique Jacques Hadamard (FMJH).


%
%


\end{document}